%
%
%
%
%
\RequirePackage{fix-cm}
\documentclass[smallcondensed]{svjour3}      
\smartqed  
\usepackage{graphicx}
%
%
%
\usepackage{amssymb}
\usepackage{amsmath}
\usepackage{color}
\usepackage{wrapfig}
\usepackage{float}
\usepackage{epic}   
\usepackage{times}

\allowdisplaybreaks[4] 

\journalname{Acta Applicandae Mathematicae}


\newcommand{\bbF}{{\mathbb F}}
\newcommand{\bbG}{{\mathbb G}}
\newcommand{\bbH}{{\mathbb H}}
\newcommand{\bbI}{{\mathbb I}}

\newcommand{\N}{{\mathbb N}}
\newcommand{\bbO}{{\mathbb O}}

\newcommand{\R}{{\mathbb R}}

\newcommand{\Z}{{\mathbb Z}}


\newcommand{\bfe}{\mathbf{e}}
\newcommand{\bff}{\mathbf{f}}
\newcommand{\bfg}{\mathbf{g}}
\newcommand{\bfh}{\mathbf{h}}

\newcommand{\bfn}{\mathbf{n}}

\newcommand{\bfu}{\mathbf{u}}
\newcommand{\bfv}{\mathbf{v}}
\newcommand{\bfw}{\mathbf{w}}
\newcommand{\bfx}{\mathbf{x}}



\newcommand{\bfF}{\mathbf{F}}
\newcommand{\bfG}{\mathbf{G}}

\newcommand{\bfL}{\mathbf{L}}

\newcommand{\bfV}{\mathbf{V}}
\newcommand{\bfW}{\mathbf{W}}

\newcommand{\cA}{{\cal A}}

\newcommand{\cC}{{\cal C}}

\newcommand{\cF}{{\cal F}}

\newcommand{\cO}{{\cal O}}



\newcommand{\rmd}{\mathrm{d}}
\newcommand{\rme}{\mathrm{e}}

\newcommand{\br}{\hspace{0.7pt}}
\renewcommand{\div}{\mathrm{div}\,}

\renewcommand{\atop}[2]{\genfrac{}{}{0pt}{}{#1}{#2}}
\newcommand{\bfzero}{\mathbf{0}}
\newcommand{\dist}{\mathrm{dist}}

\newcommand{\supp}{\mathrm{supp}\,}

\newcommand{\Gammai}{\Gamma_{\hspace{-1.1pt} \rm in}}
\newcommand{\Gammao}{\Gamma_{\hspace{-1.1pt} \rm out}}
\newcommand{\Gammaw}{\Gamma_{\hspace{-1.1pt} p}}
\newcommand{\Gammam}{\Gamma_{\hspace{-1.1pt} 0}}
\newcommand{\Gammap}{\Gamma_{\hspace{-1.1pt} 1}}
\newcommand{\gammai}{\gamma_{\rm in}}
\newcommand{\gammao}{\gamma_{\rm out}}

\newcommand{\bfgs}{\bfg_{\displaystyle *}}

\newcommand{\cCs}{\boldsymbol{\cC}^{\infty}_{\sigma}(\overline{\Omega})}
\newcommand{\cCns}{\boldsymbol{\cC}^{\infty}_{0,\sigma}(\Omega)}
\newcommand{\cCn}{\boldsymbol{\cC}^{\infty}_0(\Omega)}
\newcommand{\Ls}{\bfL_{\sigma}^2(\Omega)}
\newcommand{\Vs}{\bfV_{\sigma}^{1,2}(\Omega)}
\newcommand{\Vsd}{\bfV_{\sigma}^{-1,2}(\Omega)}
\newcommand{\vsd}{\bfV_{\sigma}^{-1,2}}

\newcommand{\bfvt}{\widetilde{\bfv}}
\newcommand{\vt}{\widetilde{v}}
\newcommand{\pt}{\widetilde{p}}
\newcommand{\bfft}{\widetilde{\bff}}
\newcommand{\bfht}{\widetilde{\bfh}}
\newcommand{\hht}{\widetilde{h}}
\newcommand{\bbFt}{\widetilde{\bbF}}
\newcommand{\bfFt}{\widetilde{\bfF}}
\newcommand{\Omegat}{\widetilde{\Omega}}

\newcommand{\lclosed}{[\hbox to 1pt{}}
\newcommand{\rclosed}{\hbox to 1pt{}]}

\newcommand{\llangle}{\langle\hspace{-1.9pt}\langle}
\newcommand{\rrangle}{\rangle\hspace{-1.9pt}\rangle}
\newcommand{\blangle}{\bigl\langle}
\newcommand{\brangle}{\bigr\rangle}

\newcounter{constants}
\setcounter{constants}{0}
\newcommand{\cn}[2]{ \addtocounter{constants}{1}
\newcounter{c#1#2}
\setcounter{c#1#2}{\value{constants}} c_{\arabic{c#1#2}} }
\newcommand{\cc}[2]{c_{\arabic{c#1#2}}}

\definecolor{lightgrey}{rgb}{0.82,0.82,0.82}

\begin{document}

\title{The maximum regularity property of the steady Stokes
problem associated with a flow through a profile cascade}

\titlerunning{Steady Stokes problem associated with a flow through a
profile cascade}

\author{Tom\'a\v{s} Neustupa}

\authorrunning{Tom\'a\v{s} Neustupa}

\institute{Czech Technical University, Faculty of Mechanical
Engineering, Department of Technical Mathematics, Karlovo
n\'am.~13, 121 35 Praha 2, Czech Republic \\
\email{tomas.neustupa@fs.cvut.cz}}

\date{Received: date / Accepted: date}

\maketitle

\begin{abstract}
We deal with a steady Stokes-type problem, associated with a
flow of a Newtonian incompressible fluid through a spatially
periodic profile cascade. The used mathematical model is based
on the reduction to one spatial period, represented by a
bounded 2D domain $\Omega$. The corresponding Stokes--type
problem is formulated by means of the Stokes equation, equation
of continuity and three types of boundary conditions: the
conditions of periodicity on the curves $\Gammam$ and
$\Gammap$, the Dirichlet boundary conditions on $\Gammai$ and
$\Gammaw$ and an artificial ``do nothing''--type boundary
condition on $\Gammao$. (See Fig.~1.) We explain on the level
of weak solutions the sense in which the last condition is
satisfied. We show that, although domain $\Omega$ is not smooth
and different types of boundary conditions meet in the corners
of $\Omega$, the considered problem has a strong solution with
the so called maximum regularity property.

\keywords{The Stokes problem \and Artificial boundary condition
\and Maximum regularity property}
\subclass{35Q30 \and 76D03 \and 76D07}
\end{abstract}

\section{Introduction} \label{S1}

\paragraph{The profile cascade and reduction to one spatial period.}
The flow through a 3D turbine wheel is often being modelled by
a flow through a 2D profile cascade, which consists of an
infinite number of profiles that periodically repeat with the
period $\tau\br\bfe_2$ in the $x_2$--direction, see Fig.~1.
Here, we use the planar Cartesian coordinate system $x_1$,
$x_2$. Unit vectors in the directions of the $x_1$ and $x_2$
axes are denoted by $\bfe_1$ and $\bfe_2$, respectively. The
profile cascade consists of an infinite family of profiles
$\{P_k\}_{k\in\Z}$ such that $P_k$ are closed bounded sets in
the stripe $\R^2_{(0,d)}:=\{\bfx\equiv(x_1,x_2)\in\R^2;\
0<x_1<d\}$, with Lipschitzian boundaries, such that $\,
P_k=P_0+k\br\tau\br\bfe_2$ and $P_k\cap P_{k+1}=\emptyset$ for
$k\in\Z$. As the set $\cO:= \{(\bfx=(x_1,x_2)\in\R^2;\
x_1\in(0,d)\} \smallsetminus \cup_{k=-\infty}^{\infty}P_k$,
through which the fluid flows, is spatially periodic, it is
natural to assume that, provided that the acting body force and
the given boundary data are also spatially periodic with the
same period $\tau\br\bfe_2$, the fluid flow is spatially
periodic, too. This enables us to reduce the mathematical model
of the flow through

\begin{wrapfigure}[17]{r}{64mm}
  \setlength{\unitlength}{0.4mm}
  \hspace{-10pt}
  \begin{picture}(132,163)
  \put(5,60){\vector(1,0){157}} \put(20,31){\vector(0,1){124}}
  \put(149,64){\small $x_1$} \put(23,147){\small $x_2$}
  \put(28.5,106.4){\vector(0,1){3.2}}
  \dashline[+30]{2.2}(28.5,104.9)(28.5,75.2)
  \put(28.5,73.6){\vector(0,-1){3.7}} \put(31.5,97){\small $\tau$}
  \thicklines 
  \color{lightgrey}
\put(40.2,90){\line(1,0){35}} \put(40.2,90.4){\line(1,0){34}}
\put(40.2,90.8){\line(1,0){33}}
\put(40.4,91.2){\line(1,0){31.6}}
\put(40.4,91.6){\line(1,0){30.4}}
\put(40.5,92){\line(1,0){29.3}}
\put(40.5,92.4){\line(1,0){28.1}}\put(40.8,92.8){\line(1,0){26.5}}
\put(41.0,93.2){\line(1,0){25.0}}\put(41.5,93.6){\line(1,0){23.3}}
\put(41.8,94.0){\line(1,0){21.3}}\put(42.1,94.4){\line(1,0){19.3}}
\put(42.5,94.8){\line(1,0){17.3}}\put(43.2,95.2){\line(1,0){14.8}}
\put(44.3,95.6){\line(1,0){11.8}}\put(46.3,96.0){\line(1,0){6.8}}
\put(40.2,89.6){\line(1,0){36.1}}\put(40.4,89.2){\line(1,0){36.7}}
\put(40.6,88.8){\line(1,0){37.4}}\put(40.8,88.4){\line(1,0){38.2}}
\put(41.1,88.0){\line(1,0){38.9}}\put(41.5,87.6){\line(1,0){39.4}}
\put(42.1,87.2){\line(1,0){39.5}}\put(42.7,86.8){\line(1,0){39.8}}
\put(43.7,86.4){\line(1,0){39.7}}\put(44.5,86){\line(1,0){39.6}}
\put(45.7,85.6){\line(1,0){39.3}}\put(46.8,85.2){\line(1,0){39.1}}
\put(48.5,84.8){\line(1,0){38.2}}\put(50.4,84.4){\line(1,0){37.0}}
\put(52.3,84.0){\line(1,0){35.8}}\put(55.0,83.6){\line(1,0){33.8}}
\put(58.3,83.2){\line(1,0){31.2}}\put(61.7,82.8){\line(1,0){28.6}}
\put(65.3,82.4){\line(1,0){25.6}}\put(68.0,82.0){\line(1,0){23.5}}
\put(70.2,81.6){\line(1,0){21.9}}\put(72.2,81.2){\line(1,0){20.9}}
\put(74.2,80.8){\line(1,0){19.4}}\put(75.9,80.4){\line(1,0){18.3}}
\put(77.5,80.0){\line(1,0){17.3}}\put(79.0,79.6){\line(1,0){16.3}}

\put(80.8,79.2){\line(1,0){15.3}}\put(82.3,78.8){\line(1,0){14.4}}
\put(84.0,78.4){\line(1,0){13.3}}\put(85.5,78.0){\line(1,0){12.6}}
\put(87,77.6){\line(1,0){11.7}}\put(88.5,77.2){\line(1,0){10.8}}
\put(90.0,76.8){\line(1,0){9.9}}\put(91,76.4){\line(1,0){9.3}}
\put(92.2,76.0){\line(1,0){8.8}}\put(93.4,75.6){\line(1,0){8.2}}
\put(94.4,75.2){\line(1,0){7.4}}\put(95.8,74.8){\line(1,0){6.7}}
\put(97.1,74.4){\line(1,0){5.6}}\put(98.3,74.0){\line(1,0){4.6}}
\put(99.5,73.6){\line(1,0){3.5}}\put(100.3,73.2){\line(1,0){3.0}}
\color{black} 
\qbezier(40,90)(40,100)(63,94) \qbezier(40,90)(40,85)(60,83)
\qbezier(63,94)(86,87)(98,78)
\qbezier(60,83)(80,80.5)(97.2,74.3)
\qbezier(98,78)(109,70)(97.2,74.3) \put(47,88){\small $P_0$}
%
\color{lightgrey} \put(40.2,50){\line(1,0){35}}
\put(40.2,50.4){\line(1,0){34}} \put(40.2,50.8){\line(1,0){33}}
\put(40.4,51.2){\line(1,0){31.6}}
\put(40.4,51.6){\line(1,0){30.4}}
\put(40.5,52){\line(1,0){29.3}}
\put(40.5,52.4){\line(1,0){28.1}}\put(40.8,52.8){\line(1,0){26.5}}
\put(41.0,53.2){\line(1,0){25.0}}\put(41.5,53.6){\line(1,0){23.3}}
\put(41.8,54.0){\line(1,0){21.3}}\put(42.1,54.4){\line(1,0){19.3}}
\put(42.5,54.8){\line(1,0){17.3}}\put(43.2,55.2){\line(1,0){14.8}}
\put(44.3,55.6){\line(1,0){11.8}}\put(46.3,56.0){\line(1,0){6.8}}
\put(40.2,49.6){\line(1,0){36.1}}\put(40.4,49.2){\line(1,0){36.7}}
\put(40.6,48.8){\line(1,0){37.4}}\put(40.8,48.4){\line(1,0){38.2}}
\put(41.1,48.0){\line(1,0){38.9}}\put(41.5,47.6){\line(1,0){39.4}}
\put(42.1,47.2){\line(1,0){39.5}}\put(42.7,46.8){\line(1,0){39.8}}
\put(43.7,46.4){\line(1,0){39.7}}\put(44.5,46){\line(1,0){39.6}}
\put(45.7,45.6){\line(1,0){39.3}}\put(46.8,45.2){\line(1,0){39.1}}
\put(48.5,44.8){\line(1,0){38.2}}\put(50.4,44.4){\line(1,0){37.0}}
\put(52.3,44.0){\line(1,0){35.8}}\put(55.0,43.6){\line(1,0){33.8}}
\put(58.3,43.2){\line(1,0){31.2}}\put(61.7,42.8){\line(1,0){28.6}}
\put(65.3,42.4){\line(1,0){25.6}}\put(68.0,42.0){\line(1,0){23.5}}
\put(70.2,41.6){\line(1,0){22.3}}\put(72.2,41.2){\line(1,0){20.9}}
\put(74.2,40.8){\line(1,0){19.4}}\put(75.9,40.4){\line(1,0){18.3}}
\put(77.5,40.0){\line(1,0){17.3}}\put(79.0,39.6){\line(1,0){16.3}}
\put(80.8,39.2){\line(1,0){15.3}}\put(82.3,38.8){\line(1,0){14.4}}
\put(84.0,38.4){\line(1,0){13.3}}\put(85.5,38.0){\line(1,0){12.6}}
\put(87,37.6){\line(1,0){11.7}}\put(88.5,37.2){\line(1,0){10.8}}
\put(90.0,36.8){\line(1,0){9.9}}\put(91,36.4){\line(1,0){9.3}}
\put(92.2,36.0){\line(1,0){8.8}}\put(93.4,35.6){\line(1,0){8.2}}
\put(94.4,35.2){\line(1,0){7.4}}\put(95.8,34.8){\line(1,0){6.7}}
\put(97.1,34.4){\line(1,0){5.6}}\put(98.3,34.0){\line(1,0){4.6}}
\put(99.5,33.6){\line(1,0){3.5}}\put(100.3,33.2){\line(1,0){3.0}}
\color{black} \thinlines 
\qbezier(40,50)(40,60)(63,54) \qbezier(40,50)(40,45)(60,43)
\qbezier(63,54)(86,47)(98,38)
\qbezier(60,43)(80,40.5)(97.2,34.3)
\qbezier(98,38)(109,30)(97.2,34.3) \put(47,48){\small$P_{-1}$}
%
 \thicklines \color{lightgrey}
\put(40.2,130){\line(1,0){35}} \put(40.2,130.4){\line(1,0){34}}
\put(40.2,130.8){\line(1,0){33}}
\put(40.4,131.2){\line(1,0){31.6}}
\put(40.4,131.6){\line(1,0){30.4}}
\put(40.5,132){\line(1,0){29.3}}
\put(40.5,132.4){\line(1,0){28.1}}\put(40.8,132.8){\line(1,0){26.5}}
\put(41.0,133.2){\line(1,0){25.0}}\put(41.5,133.6){\line(1,0){23.3}}
\put(41.8,134.0){\line(1,0){21.3}}\put(42.1,134.4){\line(1,0){19.3}}
\put(42.5,134.8){\line(1,0){17.3}}\put(43.2,135.2){\line(1,0){14.8}}
\put(44.3,135.6){\line(1,0){11.8}}\put(46.3,136.0){\line(1,0){6.8}}
\put(40.2,129.6){\line(1,0){36.1}}\put(40.4,129.2){\line(1,0){36.7}}
\put(40.6,128.8){\line(1,0){37.4}}\put(40.8,128.4){\line(1,0){38.2}}
\put(41.1,128.0){\line(1,0){38.9}}\put(41.5,127.6){\line(1,0){39.4}}
\put(42.1,127.2){\line(1,0){39.5}}\put(42.7,126.8){\line(1,0){39.8}}
\put(43.7,126.4){\line(1,0){39.7}}\put(44.5,126){\line(1,0){39.6}}
\put(45.7,125.6){\line(1,0){39.3}}\put(46.8,125.2){\line(1,0){39.1}}
\put(48.5,124.8){\line(1,0){38.2}}\put(50.4,124.4){\line(1,0){37.0}}
\put(52.3,124.0){\line(1,0){35.8}}\put(55.0,123.6){\line(1,0){33.8}}
\put(58.3,123.2){\line(1,0){31.2}}\put(61.7,122.8){\line(1,0){28.6}}
\put(65.3,122.4){\line(1,0){25.6}}\put(68.0,122.0){\line(1,0){23.5}}
\put(70.2,121.6){\line(1,0){21.9}}\put(72.2,121.2){\line(1,0){20.9}}
\put(74.2,120.8){\line(1,0){19.4}}\put(75.9,120.4){\line(1,0){18.3}}
\put(77.5,120.0){\line(1,0){17.3}}\put(79.0,119.6){\line(1,0){16.3}}
\put(80.8,119.2){\line(1,0){15.3}}\put(82.3,118.8){\line(1,0){14.4}}
\put(84.0,118.4){\line(1,0){13.3}}\put(85.5,118.0){\line(1,0){12.6}}
\put(87,117.6){\line(1,0){11.7}}\put(88.5,117.2){\line(1,0){10.8}}
\put(90.0,116.8){\line(1,0){9.9}}\put(91,116.4){\line(1,0){9.3}}
\put(92.2,116.0){\line(1,0){8.8}}\put(93.4,115.6){\line(1,0){8.2}}
\put(94.4,115.2){\line(1,0){7.4}}\put(95.8,114.8){\line(1,0){6.7}}
\put(97.1,114.4){\line(1,0){5.6}}\put(98.3,114.0){\line(1,0){4.6}}
\put(99.5,113.6){\line(1,0){3.5}}\put(100.3,113.2){\line(1,0){3.0}}
\color{black} \thinlines 
\qbezier(40,130)(40,140)(63,134)
\qbezier(40,130)(40,125)(60,123)
\qbezier(63,134)(86,127)(98,118)
\qbezier(60,123)(80,120.5)(97.2,114.3)
\qbezier(98,118)(109,110)(97.2,114.3) \put(47,128){\small $P_1$}
  \thicklines 
  \put(20,69){\line(0,1){40}} \put(130.2,40){\line(0,1){40}}
  \qbezier(20,69)(70,76)(130,40) \qbezier(20,109)(70,116)(130,80)
  \thinlines
  \put(6,89){\small $\Gammai$} \put(113.5,67){\small $\Gammao$}
  \put(75,107.5){\small $\Gammap$} \put(75,67.5){\small $\Gammam$}
  \put(37,77){\small $\Gammaw$}
  \put(107,79){\small $\Omega$} \put(107,135){\small $\cO$}
  \dashline[+28]{2}(130.1,70)(130.1,108)
  \dashline[+28]{2}(130.1,124)(130.1,152)
  \dashline[+28]{2}(130.1,40)(130.1,30)
  \put(8,67){\small $A_0$} \put(8,107){\small $A_1$}
  \put(133,38){\small $B_0$} \put(133,78){\small $B_1$}
  \put(118,114.5){\small $x_1=d$}
  \put(6.5,134){\small $\gammai$} \put(134,138){\small $\gammao$}
  \thicklines
  \put(70,28){$\vdots$} \put(70,139){$\vdots$}
  \put(19,9){\small Fig.~1: \ The profile cascade with}
  \put(42,0){\small marked one spatial period}
  \end{picture}
\end{wrapfigure}

\noindent
the whole profile cascade to the flow through just one spatial
period, which is denoted by $\Omega$, see Fig.~1. This approach
is used e.g.~in papers \cite{FeNe1}--\cite{FeNe3} and
\cite{TNe1}--\cite{TNe3}, where the qualitative analysis of
corresponding mathematical models is studied, and in papers
\cite{DFF}, \cite{KLP}, \cite{SPKF}, devoted to the numerical
analysis of the models or corresponding numerical calculations.

\paragraph{Classical formulation of the problem in one spatial
period.} We assume that $\Omega$ is a Lipschitzian sub--domain
of $\R^2_{(0,d)}$, such that its boundary consists of the line
segment $\Gammai\equiv A_0A_1$ of length $\tau$, the line
segment $\Gammao\equiv B_0B_1$ of the same length $\tau$, the
closed curve $\Gammaw$ (the boundary of profile $P_0$) and the
curves $\Gammam$, $\Gammap$ such that
$\Gammap=\Gammam+\tau\br\bfe_2$. (See Fig.~1.) As the curves
$\Gammam$ and $\Gammap$ in fact represent artificial boundaries
of $\Omega$, chosen in $\cO$, we may assume without loss of
generality that both $\Gammam$ and $\Gammap$ are of the class
$C^2$.

The reduced mathematical problem consists of the equations
\begin{align}
\partial_t\bfu-\nu\Delta\bfu+\bfu\cdot\nabla\bfu+\nabla p\ &=\ \bff
\label{1.1} \\
\div\bfu\ &=\ 0 \label{1.2}
\end{align}
in the space--time cylinder $\Omega\times(0,T)$ (where $T>0$),
completed by appropriate initial and boundary conditions. Here,
$\bfu=(u_1,u_2)$ denotes the unknown velocity of the moving
fluid, $p$ denotes the unknown pressure, positive constant
$\nu$ is the kinematic coefficient of viscosity and $\bff$ is
the external body force. The density of the fluid (which is
also supposed to be a positive constant) can be without loss of
generality supposed to be equal to one. Equation (\ref{1.1})
(the Navier--Stokes equation) expresses the conservation of
momentum and equation (\ref{1.2}) (the equation of continuity)
expresses the conservation of mass.

We assume that the fluid flows into the cascade through the
straight line $\gammai$ (the $x_2$--axis) and essentially
leaves the cascade through the straight line $\gammao$, whose
equation is $x_1=d$. (By ``essentially'' we mean that possible
reverse flows on $\gammao$ are not excluded.) This is why we
complete equations (\ref{1.1}), (\ref{1.2}) by the
inhomogeneous Dirichlet boundary condition
\begin{equation}
\bfu\ =\ \bfg \qquad \mbox{on}\ \Gammai, \label{1.3}
\end{equation}
the homogeneous Dirichlet boundary condition
\begin{equation}
\bfu\ =\ \bfzero \qquad \mbox{on}\ \Gammaw \label{1.4}
\end{equation}
and appropriate conditions on $\Gammam$, $\Gammap$ and
$\Gammao$. Due to the assumed spatial periodicity of the flow,
it is reasonable to prescribe the boundary conditions of
periodicity on $\Gammam$ and $\Gammap$:
\begin{align}
\bfu(x_1,x_2+\tau)\ &=\ \bfu(x_1,x_2) && \mbox{for}\
\bfx\equiv(x_1,x_2)\in\Gammam, \label{1.5} \\ \noalign{\vskip 4pt}
\frac{\partial\bfu}{\partial\bfn}(x_1,x_2+\tau)\ &=\
-\frac{\partial\bfu}{\partial\bfn}(x_1,x_2) &&
\mbox{for}\ \bfx\equiv(x_1,x_2)\in\Gammam, \label{1.6} \\
\noalign{\vskip 4pt}
p(x_1,x_2+\tau)\ &=\ p(x_1,x_2) && \mbox{for}\
\bfx\equiv(x_1,x_2)\in\Gammam. \label{1.7}
\end{align}
On $\Gammao$, various authors use various artificial boundary
conditions. One of the most popular ones is the condition
\begin{equation}
-\nu\, \frac{\partial\bfu}{\partial\bfn}+p\br\bfn\ =\ \bfh,
\label{1.8}
\end{equation}
\vspace{2pt} \noindent
where $\bfh$ is a given vector--function on $\Gammao$ and
$\bfn$ denotes the unit outward normal vector, which is equal
to $\bfe_1$ on $\Gammao$. The boundary condition (\ref{1.8})
(with $\bfh=\bfzero$) is often called the ``do nothing''
condition, because it naturally follows from a weak formulation
of the boundary--value problem, see e.g.~\cite{Glow} and
\cite{HeRaTu}.

\paragraph{On some previous related results.} Since this
condition does not enable one to control the amount of kinetic
energy in $\Omega$ in the case of an possible backward flow on
$\Gammao$, many authors also use various modifications of
condition (\ref{1.8}). (See e.g.~\cite{BrFa}, \cite{FeNe1},
\cite{FeNe2}, \cite{FeNe3}, \cite{TNe1}, \cite{TNe2}.) The
modified conditions enable one to derive a priori estimates of
solutions and existence of weak solutions. In paper
\cite{FeNe1}, the existence of a steady weak solution of the
problem (\ref{1.1})--(\ref{1.8}) was proven for ``sufficiently
small'' velocity profile $\bfg$ on $\Gammai$, while in
\cite{FeNe3} and \cite{TNe3}, the function $\bfg$ can be
arbitrarily large. The existence of a non-steady weak solution
on an arbitrarily long time interval has been proven in
\cite{FeNe2}. In papers \cite{KuSka} and \cite{Ku}, the authors
use the boundary condition (\ref{1.8}) on an ``outflow'' part
of the boundary for a flow in a channel, and they prove the
existence of a weak solution for ``small data''. Possible
backward flows on the ``outflow'' of the channel are controlled
by means of additional conditions in \cite{KraNe1},
\cite{KraNe2}, \cite{KraNe3}, which consequently cause that the
Navier--Stokes equations must be replaced by the Navier--Stokes
variational inequalities.

There are no results in literature about the regularity up to
the boundary of existing weak solutions. The question of higher
regularity of a solution is closely connected with the so
called {\it maximum regularity property} of the associated
steady Stokes problem, which we obtain from the Navier--Stokes
problem if we neglect the derivative with respect to $t$ and
the nonlinear term. It consists of the equations
\begin{equation}
-\nu\Delta\bfu+\nabla p\ =\ \bff \label{1.11}
\end{equation}
and (\ref{1.2}) (in $\Omega$), and the boundary conditions
(\ref{1.3})--(\ref{1.8}). The maximum regularity property
roughly speaking means that the solution $\bfu$, respectively
$p$, has by two, respectively one, spatial derivatives more
than function $\bff$, and the derivatives are integrable with
the same power as $\bff$. (See Theorem \ref{T2}.) An analogous
property of the steady Stokes problem is mostly known only in
the case of a smooth domain $\Omega$, see e.g.~\cite[Theorem
I.2.2]{Te}, \cite[Theorem III.3]{La}, \cite[Theorem IV.6.1]{Ga}
and \cite[Theorem III.2.1.1]{So} for the Stokes problem with
the inhomogeneous Dirichlet boundary condition, \cite{AlBAmEs},
\cite{ChOsQi} for problems with the Navier--type boundary
condition, \cite{AmEsGh}, \cite{ChQi} for problems with
Navier's boundary condition, \cite{Me1} for the 2D Stokes
problem with the Neumann boundary condition (i.e.~prescribing
the normal part of the stress tensor on the boundary) and
\cite{Me2} for the 2D Stokes problem, prescribing the normal
component of velocity and the pressure on the boundary.
Concerning the maximum regularity property of the Stokes
problem in non--smooth domains, we can cite \cite{Gr2},
\cite{KeOs} and \cite{Dau}, where the authors considered the
Stokes problem in a 2D polygonal domain with the Dirichlet
boundary condition. In paper \cite{KuBe}, the authors studied
the Stokes problem in a 2D channel $D$ of a special geometry,
considering the homogeneous Dirichlet boundary condition on the
walls and the homogeneous condition (\ref{1.8}) on the outflow,
and proved that the velocity is in $\bfW^{2-\beta,2}(D)$ for
certain $\beta\in(0,1)$, provided that $\bff\in\bfL^2(D)$. (See
\cite[Theorem 2.1]{KuBe}.)

\paragraph{On results of this paper.} \ In this paper, we at first
verify the existence of a weak solution $\bfu$ to the Stokes
problem (\ref{1.11}), (\ref{1.2})--(\ref{1.5}) and we show that
an appropriate pressure $p$ can be chosen so that the pair
$(\bfu,p)$ satisfies equations (\ref{1.11}), (\ref{1.2}) in the
sense of distributions in $\Omega$ and the boundary condition
(\ref{1.8}) as an equality in $\bfW^{-1/2,2}(\Gammao)$. The
boundary conditions (\ref{1.3})--(\ref{1.5}) are satisfied in
the usual sense of traces. (Theorem \ref{T1}.) Then, for ``smooth''
input data, we prove the existence of a strong solution of the
Stokes problem (\ref{1.11}), (\ref{1.2})--(\ref{1.5}) and its
maximum regularity property. (Theorem \ref{T2}.) This result cannot be
simply deduced from the previous aforementioned papers, because
our domain $\Omega$ is not smooth and we consider altogether
three types of boundary conditions, two of whose ``meet'' at
the corner points $A_0$, $A_1$, $B_0$ and $B_1$ of domain
$\Omega$. In order to prove the regularity ``up to the
boundary'' in the neighborhood of $\Gammao$ and $\Gammaw$, we
use the fact that the solution satisfies Dirichlet--type
boundary conditions on $\Gammai$ and $\Gammaw$ and we apply
known results on the Stokes problem with Dirichlet's boundary
condition. In the neighborhood of $\Gammao$, we use the fact
that $\Gammao$ is a part of a straight line and we apply the
technique of the so called difference quotients, whose
originality is usually attributed to L.~Nirenberg and which is
described e.g.~in \cite{Ag} and \cite{Gr1}. As to the curves
$\Gammam$ and $\Gammap$, we use the possibility of an
appropriate extension of a solution in the $x_2$--direction,
which enables us to avoid problems in neighborhoods of the
corner points $A_0$, $A_1$, $B_0$, $B_1$ and to study the
regularity in the neighborhood of $\Gammam$ and $\Gammap$ as an
interior problem.

\section{The weak steady Stokes problem} \label{S2}

\paragraph{Notation.} Recall that $\Omega$ is a domain in $\R^2$,
sketched on Fig.~1. Its boundary consists of the curves
$\Gammai$, $\Gammao$, $\Gammam$, $\Gammap$ and $\Gammaw$,
described in Section \ref{S1}. We denote by $\bfn=(n_1,n_2)$
the outer normal vector field on $\partial\Omega$. Note that
$\bfn=-\bfe_1$ on $\Gammai$ and $\bfn=\bfe_1$ on $\Gammao$.

\begin{list}{$\circ$}
{\setlength{\topsep 0.5mm}
\setlength{\itemsep 1.0mm}
\setlength{\leftmargin 10pt}
\setlength{\rightmargin 0pt}
\setlength{\labelwidth 6pt}}

\item
$\Gammai^0$ , respectively $\Gammao^0$, denotes the open line
segment without the end points $A_0,\, A_1$, respectively
$B_0,\, B_1$. Similarly, $\Gammam^0$, respectively $\Gammap^0$
denotes the curve $\Gammam$, respectively $\Gammap$, without
the end points $A_0$, $B_0$, respectively $A_1$, $B_1$.

\item
We denote vector functions and spaces of vector functions by
boldface letters. Tensor functions are denoted e.g.~by $\bbF$
or $\bbG$ and spaces of tensor functions are marked by a superscript
$2\times 2$.

\item
We denote by $\|\, .\, \|_r$ the norm in $L^r(\Omega)$ or in
$\bfL^r(\Omega)$ or in $L^r(\Omega)^{2\times 2}$. Similarly,
$\|\, .\, \|_{r,s}$ is the norm in $W^{r,s}(\Omega)$ or in
$\bfW^{r,s}(\Omega)$ or in $W^{r,s}(\Omega)^{2\times 2}$. The
scalar product in $L^2(\Omega)$ or in $\bfL^2(\Omega)$ or in
$L^2(\Omega)^{2\times 2}$ is denoted by $(\, .\, ,\, .\, )_2$.

\item
$W^{-1/2,2}(\Gammao)$ is the dual space to
$W^{1/2,2}(\Gammao)$. Note that the spaces
$W^{-s,2}(.\br.\br.)$ (for $s>0$) are usually defined to be the
dual spaces to $W^{s,2}_0(.\br.\br.)$, see
e.g.~\cite[Definition I.12.1]{LiMa}. However, as
$W^{1/2,2}(\Gammao)=W^{1/2,2}_0(\Gammao)$ (see \cite[Theorem
II.11.1]{LiMa}), it plays no role whether we define
$W^{-1/2,2}(\Gammao)$ to be the dual to $W^{1/2,2}(\Gammao)$ or
$W^{1/2,2}_0(\Gammao)$.

\item
$\cCs$ denotes the linear space of infinitely differentiable
divergence--free vector functions in $\overline{\Omega}$, whose
support is disjoint with $\Gammai\cup\Gammaw$ and that satisfy,
together with all their derivatives (of all orders), the
condition of periodicity (\ref{1.5}). Note that each
$\bfw\in\cCs$ satisfies $\int_{\Gammao}\bfw\cdot\bfn\; \rmd
l=0$.

\item
$\Vs$ is the closure of $\cCs$ in $\bfW^{1,2}(\Omega)$. The
space $\Vs$ can be characterized as a space of divergence--free
vector functions $\bfv\in\bfW^{1,2}(\Omega)$, whose traces on
$\Gammai\cup\Gammaw$ are equal to zero, the traces on $\Gammam$
and $\Gammap$ satisfy the condition of periodicity (\ref{1.5})
and the traces on $\Gammao$ satisfy
$\int_{\Gammao}\bfv\cdot\bfn\; \rmd l=0$. Note that as
functions from $\Vs$ are equal to zero on $\Gammai\cup\Gammaw$
(in the sense of traces) and domain $\Omega$ is bounded, the
norm in $\Vs$ is equivalent to $\|\nabla.\, \|_2$.

\item
We denote by $\bfW^{-1,2}_0(\Omega)$ the dual space to
$\bfW^{1,2}(\Omega)$, by $\bfW^{-1,2}(\Omega)$ the dual space
to $\bfW^{1,2}_0(\Omega)$, and by $\|\, .\, \|_{\bfW^{-1,2}_0}$
and $\|\, .\, \|_{\bfW^{-1,2}}$ the corresponding norms.

\item
$\Vsd$ is the dual space to $\Vs$. The duality pairing between
$\Vsd$ and $\Vs$ is denoted by $\langle\, .\, ,\, .\,
\rangle_{\sigma}$. The norm in $\Vsd$ is denoted by $\|\, .\,
\|_{\vsd}$.

\item
Denote by $\cA$ the linear mapping of $\Vs$ to $\Vsd$, defined
by the equation
\begin{displaymath}
\blangle\cA\bfv,\bfw\brangle_{\sigma}\ :=\ \int_{\Omega}
\nabla\bfv:\nabla\bfw\; \rmd\bfx \qquad \mbox{for}\
\bfv,\bfw\in\Vs.
\end{displaymath}

\vspace{-1.5mm} \item
$c$ denotes a generic constant, i.e.~a constant whose values
may change throughout the text.

\end{list}

\begin{lemma} \label{L2.4}
Operator $\cA$ is a one--to--one closed bounded operator from
$\Vs$ to the dual space $\Vsd$ with the domain $D(\cA)=\Vs$ and
range $R(\cA)=\Vsd$. The inverse operator $\cA^{-1}$ is
bounded, as an operator from $\Vsd$ to $\Vs$.
\end{lemma}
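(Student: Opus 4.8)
The plan is to observe that $\cA$ is precisely the operator canonically associated with the bilinear form
$a(\bfv,\bfw) := \int_{\Omega}\nabla\bfv:\nabla\bfw\;\rmd\bfx$
on $\Vs\times\Vs$, and that every assertion of the lemma is a consequence of the Lax--Milgram lemma (indeed of the Riesz representation theorem, since $a$ is symmetric) applied on the Hilbert space $\Vs$.

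First I would check that $\cA$ is well defined and bounded on all of $\Vs$. The Cauchy--Schwarz inequality gives $|\blangle\cA\bfv,\bfw\brangle_{\sigma}|\le\|\nabla\bfv\|_2\,\|\nabla\bfw\|_2$ for all $\bfv,\bfw\in\Vs$, and since the norm of $\Vs$ is equivalent to $\|\nabla\,.\,\|_2$ (as recalled in the notation, because functions in $\Vs$ vanish on $\Gammai\cup\Gammaw$ and $\Omega$ is bounded), we get $\cA\bfv\in\Vsd$ with $\|\cA\bfv\|_{\vsd}\le c\,\|\nabla\bfv\|_2$. Hence $D(\cA)=\Vs$ and $\cA$ is bounded; a bounded operator defined on the whole space is automatically closed, which takes care of that part of the statement.

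Next I would establish coercivity: for $\bfv\in\Vs$ one has $\blangle\cA\bfv,\bfv\brangle_{\sigma}=\|\nabla\bfv\|_2^2$, which by the same Poincaré-type estimate dominates $c\,\|\nabla\bfv\|_2\cdot\|\nabla\bfv\|_2$ up to a constant, i.e. $a$ is bounded and coercive on $\Vs$. Injectivity (the ``one--to--one'' claim) is then immediate, since $\cA\bfv=0$ forces $\|\nabla\bfv\|_2=0$, hence $\bfv=\bfzero$. By Lax--Milgram, for each $\bff\in\Vsd$ there is a (unique) $\bfv\in\Vs$ with $a(\bfv,\bfw)=\blangle\bff,\bfw\brangle_{\sigma}$ for all $\bfw\in\Vs$, that is $\cA\bfv=\bff$; thus $R(\cA)=\Vsd$ and $\cA^{-1}\colon\Vsd\to\Vs$ is well defined. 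Finally, testing $\cA\bfv=\bff$ with $\bfw=\bfv$ yields $\|\nabla\bfv\|_2^2=\blangle\bff,\bfv\brangle_{\sigma}\le\|\bff\|_{\vsd}\,\|\bfv\|_{\Vs}\le c\,\|\bff\|_{\vsd}\,\|\nabla\bfv\|_2$, whence $\|\cA^{-1}\bff\|_{\Vs}\le c\,\|\bff\|_{\vsd}$, i.e. $\cA^{-1}$ is bounded.

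There is no real obstacle here: the statement is a textbook application of Lax--Milgram. The single point that needs a word of justification is the coercivity inequality, which rests on the Poincaré inequality for vector fields vanishing on the portion $\Gammai\cup\Gammaw$ of $\partial\Omega$ of positive measure, and this has effectively already been recorded in the description of the space $\Vs$ in the notation above.
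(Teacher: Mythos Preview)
Your proposal is correct and follows essentially the same approach as the paper: both rely on the equivalence of $\|\nabla\cdot\|_2$ with the $\Vs$--norm to get boundedness, injectivity via $\blangle\cA\bfv,\bfv\brangle_\sigma=\|\nabla\bfv\|_2^2$, and surjectivity via the Riesz representation theorem (which you phrase as Lax--Milgram). The only cosmetic difference is that you obtain the boundedness of $\cA^{-1}$ directly from the coercivity estimate, whereas the paper deduces it from the closed graph theorem applied to $\cA^{-1}$; your route is arguably slightly more direct.
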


\begin{proof}
Denote by $N(\cA)$ the null space of $\cA$. Let $\bfv\in
N(\cA)$. Then
\begin{displaymath}
\blangle\cA\bfv,\bfw\brangle_{\sigma}\ =\
(\nabla\bfv,\nabla\bfw)_2\ =\ 0
\end{displaymath}
for all $\bfw\in\Vs$. The choice $\bfw=\bfv$ yields
$(\nabla\bfv,\nabla\bfv)_2=\|\nabla\bfv\|_2^2=0$. This
(together with the boundary conditions on $\Gammai\cup\Gammaw$)
implies that $\bfv=\bfzero$. Thus, operator $\cA$ is injective.

The boundedness of $\cA$ can be proven in this way::
\begin{align*}
\|\cA\bfv\|_{\vsd}\ &= \sup_{\atop{\bfw\in\Vs,}
{\bfw\not=\bfzero}}
\frac{|\br\langle
\cA\bfv,\bfw\rangle_{\sigma}\br|}{\|\bfw\|_{1,2}}\ =
\sup_{\atop{\bfw\in\Vs,}{\bfw\not=\bfzero}}
\frac{|(\nabla\bfv,\,
\nabla\bfw)_2|} {\|\bfw\|_{1,2}}\
\leq\ c\, \|\nabla\bfv\|_2.
\end{align*}

The equality $D(\cA)=\Vs$ follows from the definition of $\cA$.
The equality $R(\cA)=\Vsd$ follows from Riesz' theorem and the
equivalence of the scalar products $(\, .\, ,\, .\, )_{1,2}$
and $(\nabla.\, ,\nabla.\, )_2$ in $\Vs$: if $\bff\in\Vsd$ then
there exists $\bfv\in\Vs$ such that
$\langle\bff,\bfw\rangle_{\sigma}=(\nabla\bfv,\nabla\bfw)_2$
for all $\bfw\in\Vs$. Hence $\bff=\cA\bfv$.

Operator $\cA$ is closed, as a bounded linear operator, defined
on the whole space $\Vs$. Hence the inverse operator $\cA^{-1}$
is also closed. As a closed linear operator, defined on the
whole space $\Vsd$, $\cA^{-1}$ is bounded from $\Vsd$ to $\Vs$.
\end{proof}

\vspace{4pt}
Assume that $\bbF\in L^2(\Omega)^{2\times 2}$. Define a bounded
linear functional $\bfF\in\Vsd$ by the formula
\begin{equation}
\blangle\bfF,\bfw\brangle_{\sigma}\ :=\
-\int_{\Omega}\bbF:\nabla\bfw\; \rmd\bfx \qquad \mbox{for all}\
\bfw\in\Vs. \label{2.1}
\end{equation}

The next lemma comes from \cite[Sec.~3]{FeNe3}.

\begin{lemma} \label{L2.5}
Assume that $\bfg\in\bfW^{1/2,2}(\Gammai)$ is a given function
on $\Gammai$, such that it can be extended from $\Gammai$ to
$\gammai$ as a function $\tau$--periodic function from
$\bfW^{1/2,2}_{loc}(\gammai)$. Then there exists a
divergence--free extension $\bfg_*$ of $\bfg$ from $\Gammai$ to
$\Omega$, such that $\bfg_*\in\bfW^{1,2}(\Omega)$,

\vspace{4pt}
a) \ $\|\bfg_*\|_{1,2}\leq c\, \|\bfg\|_{1/2,2;\, \Gammai}$
(where $c$ is independent of $\bfg$ and $\bfgs$),

\vspace{4pt}
b) \ $\bfg_*$ satisfies the condition of periodicity
(\ref{1.5}) on $\Gammam\cup\Gammap$,

\vspace{4pt}
c) \ $\bfg_*=(\Phi/\tau)\, \bfe_1$ \ in a neighborhood of
$\Gammao$, where $\Phi=\int_{\Gammai}\bfg\cdot\bfe_1\; \rmd l$,

\vspace{4pt}
d) \ $\bfg_*=\bfzero$ on $\Gammaw$ in the sense of traces.
\end{lemma}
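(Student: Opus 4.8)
The plan is to build $\bfg_*$ as a sum $\bfg_* = \bfg_*^{(1)} + \bfg_*^{(2)}$, in which $\bfg_*^{(1)}$ carries the flux $\Phi := \int_{\Gammai}\bfg\cdot\bfe_1\; \rmd l$ and realizes the prescribed constant profile near $\Gammao$, while $\bfg_*^{(2)}$ is a flux--free correction, supported near $\Gammai$, whose trace on $\Gammai$ supplies the remaining part of $\bfg$. Both summands are written as $\nabla^{\perp}$ of a scalar stream function, so that divergence--freeness is automatic, and both vanish identically in a fixed neighbourhood of $\Gammaw$, which yields~(d) in the strong form $\bfg_*\equiv\bfzero$ near $\Gammaw$. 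Throughout we use $|\Phi|\le c\,\|\bfg\|_{1/2,2;\,\Gammai}$, a consequence of the trace inequality.

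For $\bfg_*^{(1)}$, put $\bfa := (\Phi/\tau)\,\bfe_1 = \nabla^{\perp}\psi_0$ with $\psi_0(\bfx) := -(\Phi/\tau)\,x_2$, so that $\nabla\psi_0$ is $\tau$--periodic in $x_2$. Since $\Gammaw$ has positive distance from $\Gammai\cup\Gammao\cup\Gammam\cup\Gammap$, fix $\theta\in C^{\infty}(\overline{\Omega})$ with $\theta\equiv 1$ near $\Gammaw$ and $\theta\equiv 0$ near $\Gammai\cup\Gammao\cup\Gammam\cup\Gammap$, and set $\bfg_*^{(1)} := \nabla^{\perp}\bigl((1-\theta)\,\psi_0\bigr)$. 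Then $\bfg_*^{(1)}\in\bfW^{1,2}(\Omega)$ with $\|\bfg_*^{(1)}\|_{1,2}\le c\,|\Phi|$, it is divergence--free, it equals $\bfa = (\Phi/\tau)\,\bfe_1$ in neighbourhoods of $\Gammao$ and of $\Gammai$, it vanishes near $\Gammaw$, and on $\Gammam\cup\Gammap$ it coincides with the constant field $\bfa$, so the periodicity condition (\ref{1.5}) holds there.

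The correction $\bfg_*^{(2)}$ must be divergence--free, equal to $\bfg-\bfa$ on $\Gammai$, $\tau$--periodic on $\Gammam\cup\Gammap$, and supported in the strip $V := \{\bfx\in\overline{\Omega};\ x_1<\varepsilon\}$ (so that it is $\bfzero$ near $\Gammao$ and $\Gammaw$), with norm bounded by $\|\bfg\|_{1/2,2;\,\Gammai}$. The essential fact is that $(\bfg-\bfa)|_{\Gammai}$ has zero flux, $\int_{\Gammai}(\bfg-\bfa)\cdot\bfe_1\; \rmd l = 0$; writing $\bfg=(g_1,g_2)$ and parametrising $\Gammai$ by $x_2\in[a_0,a_0+\tau]$ with $A_0=(0,a_0)$, the primitive $\psi_1(x_2) := -\int_{a_0}^{x_2}\bigl(g_1(0,s)-\Phi/\tau\bigr)\; \rmd s$ is therefore well defined and $\tau$--periodic, and $\psi_1\in W^{3/2,2}(\Gammai)$ with $\|\psi_1\|_{3/2,2;\,\Gammai}\le c\,\|\bfg\|_{1/2,2;\,\Gammai}$, since integration gains one derivative and the mean value $\Phi/\tau$ of $g_1(0,\cdot)$ has been subtracted off. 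Choosing $\varepsilon>0$ so small that $\partial V$ consists only of $\Gammai$, of $\{x_1=\varepsilon\}\cap\overline{\Omega}$, and of arcs of $\Gammam$ and $\Gammap$, and using that $\Gammam$, $\Gammap$ are of class $C^2$ with $\Gammap=\Gammam+\tau\bfe_2$, we flatten $V$ by a $C^2$ change of variables commuting with the shift $x_2\mapsto x_2+\tau$; its image is a straight cylinder $[0,\varepsilon]\times(\R/\tau\Z)$, which has no corner points. On the cylinder one solves a routine trace--extension problem and pulls the solution back, obtaining a $\tau$--periodic $\Psi\in W^{2,2}(V)$ with $\Psi=\psi_1$ and $\partial\Psi/\partial x_1 = g_2$ on $\Gammai$, with vanishing Cauchy data on $\{x_1=\varepsilon\}\cap\overline{\Omega}$, and with $\|\Psi\|_{2,2;\,V}\le c\,\|\bfg\|_{1/2,2;\,\Gammai}$. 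We then set $\bfg_*^{(2)} := \nabla^{\perp}\Psi$ in $V$ and $\bfg_*^{(2)} := \bfzero$ in $\Omega\setminus V$; the vanishing Cauchy data make this an $\bfW^{1,2}(\Omega)$--field, it is divergence--free, it is $\tau$--periodic on $\Gammam\cup\Gammap$ because $\Psi$ is, it vanishes near $\Gammao$ and $\Gammaw$, and its trace on $\Gammai$ equals $(-\partial_2\Psi,\partial_1\Psi) = (g_1-\Phi/\tau,\,g_2) = (\bfg-\bfa)|_{\Gammai}$.

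Finally $\bfg_* := \bfg_*^{(1)} + \bfg_*^{(2)}$ is divergence--free, lies in $\bfW^{1,2}(\Omega)$ and obeys~(a), satisfies~(\ref{1.5}) on $\Gammam\cup\Gammap$ (this is~(b)), equals $(\Phi/\tau)\,\bfe_1$ near $\Gammao$ (this is~(c)), vanishes in a full neighbourhood of $\Gammaw$, hence on $\Gammaw$ (this is~(d)), and has trace $\bfa + (\bfg-\bfa) = \bfg$ on $\Gammai$, so it is indeed an extension of $\bfg$. The step I expect to be the main obstacle is the construction of $\bfg_*^{(2)}$: one has to check that the primitive $\psi_1$ is single--valued on $\Gammai$ --- which is precisely the zero--flux condition and forces one to peel off $\bfa$ first and to define $\Phi$ as above --- to bound $\psi_1$ in $W^{3/2,2}$ by $\|\bfg\|_{1/2,2;\,\Gammai}$, and, above all, to dispose of the corner points $A_0$, $A_1$. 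The last point is where the hypothesis that $\bfg$ admits a $\tau$--periodic extension to $\gammai$ enters, since it lets us treat $V$ as a corner--free cylinder; it is crucial there that $\Psi$ be honestly $\tau$--periodic, and not merely periodic up to an additive constant, which is one more reason why the reduction to flux--free data on $\Gammai$ must be carried out first.
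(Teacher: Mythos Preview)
Your proposal is correct and follows essentially the same strategy as the paper. In the paper, Lemma~\ref{L2.5} is not proved but taken from \cite[Sec.~3]{FeNe3}; the only argument the paper actually gives is the sketch for the generalization Lemma~\ref{L2.1}, and that sketch proceeds exactly as you do: split off the mean of $g_1$ to obtain flux--free data, integrate to a periodic stream function in $W^{3/2,2}_{\rm per}(\Gammai)$, extend it to a periodic $W^{2,2}$ function on $\Omega$ with prescribed normal derivative $g_2$ (the paper invokes Lemma~\ref{L2.9}, you flatten to a cylinder --- these are the same device), and take $\nabla^{\perp}$.

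The only organisational difference is that the paper first constructs the extension as $\nabla^{\perp}\psi+(\overline{g_1},0)^{T}$ and then refers to \cite{FeNe3} for the ``further steps'' that force properties (c) and (d), whereas you build those properties in from the outset: your $\bfg_*^{(1)}=\nabla^{\perp}\bigl((1-\theta)\psi_0\bigr)$ already vanishes near $\Gammaw$ and equals $(\Phi/\tau)\bfe_1$ near $\Gammao$, and your $\bfg_*^{(2)}$ is supported away from both. This is a cleaner packaging of the same idea, not a different method. Note only that your convention $\nabla^{\perp}=(-\partial_2,\partial_1)$ differs in sign from the paper's $\nabla^{\perp}=(\partial_2,-\partial_1)$; this is harmless but would need to be reconciled if the argument is inserted into the paper.
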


\noindent
Note that the function $\bfg$ is assumed to be in
$\bfW^{s,2}(\Gammai)$ for some $s>\frac{1}{2}$ and to satisfy
the condition $\bfg(A_0)=\bfg(A_1)$ in \cite{FeNe3}. However,
this condition can be simply replaced by our assumption,
i.e.~that $\bfg\in\bfW^{1/2,2}(\Gammai)$ has a $\tau$--periodic
extension in $\bfW^{1/2,2}_{loc}(\gammai)$, with no affect on
the proof in \cite{FeNe3}.

Let $\bfg_*$ be the function, provided by Lemma \ref{L2.5}.
Define a bounded linear functional $\bfG$ on $\Vs$ by the
formula
\vspace{-4pt}
\begin{equation}
\blangle\bfG,\bfw\brangle_{\sigma}\ :=\
-\int_{\Omega}\nabla\bfg_*:\nabla\bfw\; \rmd\bfx \qquad \mbox{for
all}\ \bfw\in\Vs. \label{2.34}
\end{equation}

\begin{theorem} [on a weak solution of the Stokes problem
(\ref{1.2})--(\ref{1.5}), (\ref{1.8}), (\ref{1.11}))]
\label{T1} Let $\bfF,\, \bfG\in\Vsd$ be defined by formulas
(\ref{2.1}) and (\ref{2.34}), respectively. Then the equation
$\nu\cA\bfv=\bfF+\nu\br\bfG$ has a unique solution
$\bfv\in\Vs$. Moreover, there exists $p\in L^2(\Omega)$ such
that the functions $\bfu:=\bfg_*+\bfv$, which is
divergence--free, and $p$ satisfy the equation
\begin{equation}
-\nu\Delta\bfu+\nabla p\ =\ \div\bbF \label{2.33}
\end{equation}
in the sense of distributions in $\Omega$, the boundary
condition
\begin{equation}
\bigl(-\nu\br\nabla\bfu+p\br\bbI-\bbF\bigr)\cdot\bfn\ =\ \bfzero
\label{2.31}
\end{equation}
as an equality in $\bfW^{-1/2,2}(\Gammao)$ and the estimate
\phantom{$\cn03$}
\begin{equation}
\|p\|_2\ \leq\ \cc03\, \bigl( \|\nabla\bfu\|_2+ \|\bbF\|_2\bigr),
\label{2.29}
\end{equation}
where $\cc03=\cc03(\Omega,\nu)$.
\end{theorem}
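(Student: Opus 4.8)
The existence and uniqueness of $\bfv\in\Vs$ solving $\nu\cA\bfv=\bfF+\nu\bfG$ is immediate from Lemma \ref{L2.4}: since $\bfF+\nu\bfG\in\Vsd$ and $\cA^{-1}\colon\Vsd\to\Vs$ is a bounded bijection, set $\bfv:=\cA^{-1}(\tfrac1\nu\bfF+\bfG)$. Unwinding the definitions (\ref{2.1}), (\ref{2.34}) of $\bfF,\bfG$, this means
\begin{displaymath}
\nu\int_\Omega\nabla\bfv:\nabla\bfw\;\rmd\bfx\ =\ -\int_\Omega\bbF:\nabla\bfw\;\rmd\bfx\ -\ \nu\int_\Omega\nabla\bfg_*:\nabla\bfw\;\rmd\bfx
\end{displaymath}
for all $\bfw\in\Vs$; equivalently, with $\bfu:=\bfg_*+\bfv$ (which is divergence-free because both $\bfg_*$ and $\bfv$ are),
\begin{equation}
\nu\int_\Omega\nabla\bfu:\nabla\bfw\;\rmd\bfx\ =\ -\int_\Omega\bbF:\nabla\bfw\;\rmd\bfx\qquad\text{for all }\bfw\in\Vs. \label{plan-weak}
\end{equation}
This is the weak form from which everything else is extracted.

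The recovery of the pressure is the heart of the matter. Testing (\ref{plan-weak}) against $\bfw\in\cCns$ (compactly supported, divergence-free) gives $\langle-\nu\Delta\bfu-\div\bbF,\bfw\rangle=0$ for all such $\bfw$, so the distribution $-\nu\Delta\bfu-\div\bbF\in\bfW^{-1,2}(\Omega)$ annihilates all compactly supported divergence-free test fields. By the classical de Rham / Nečas lemma on Lipschitz domains there exists $p\in L^2(\Omega)$, unique up to an additive constant, with $\nabla p=\nu\Delta\bfu+\div\bbF$ in $\cD'(\Omega)$, which is (\ref{2.33}); fix the constant by requiring $\int_\Omega p\;\rmd\bfx=0$. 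The norm estimate (\ref{2.29}) then follows from the standard inequality $\|p\|_2\le c\,\|\nabla p\|_{\bfW^{-1,2}}$ for mean-zero $p$ (the inf–sup / Bogovskii bound on the Lipschitz domain $\Omega$) applied to $\nabla p=\nu\Delta\bfu+\div\bbF$, since $\|\Delta\bfu\|_{\bfW^{-1,2}}\le\|\nabla\bfu\|_2$ and $\|\div\bbF\|_{\bfW^{-1,2}}\le\|\bbF\|_2$; this yields $\|p\|_2\le\cc03(\|\nabla\bfu\|_2+\|\bbF\|_2)$.

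It remains to identify the boundary condition (\ref{2.31}) in $\bfW^{-1/2,2}(\Gammao)$. The plan is an integration-by-parts / Green's-formula argument: having (\ref{2.33}) with $p\in L^2$, the tensor $\bbT:=-\nu\nabla\bfu+p\bbI-\bbF$ satisfies $\div\bbT=\bfzero$ in $\Omega$ and $\bbT\in L^2(\Omega)^{2\times2}$, so its normal trace $\bbT\cdot\bfn$ is well defined in $\bfW^{-1/2,2}(\partial\Omega)$ and Green's formula holds: for every $\bfw\in\bfW^{1,2}(\Omega)$,
\begin{displaymath}
\langle\bbT\cdot\bfn,\bfw\rangle_{\partial\Omega}\ =\ \int_\Omega\bbT:\nabla\bfw\;\rmd\bfx\ =\ \int_\Omega(-\nu\nabla\bfu-\bbF):\nabla\bfw\;\rmd\bfx\ +\ \int_\Omega p\,\div\bfw\;\rmd\bfx.
\end{displaymath}
Now restrict to $\bfw\in\Vs$: then $\div\bfw=0$, and by (\ref{plan-weak}) the right-hand side vanishes, so $\langle\bbT\cdot\bfn,\bfw\rangle_{\partial\Omega}=0$ for all $\bfw\in\Vs$. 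Because $\Vs$ contains all sufficiently regular divergence-free fields that vanish on $\Gammai\cup\Gammaw$, are $\tau$-periodic on $\Gammam\cup\Gammap$, and have zero flux through $\Gammao$, the boundary pairing $\langle\bbT\cdot\bfn,\cdot\rangle$ is supported on $\Gammao$ and, modulo the single flux constraint, tests against all of $\bfW^{1/2,2}(\Gammao)$. The divergence-freeness of $\bbT$ kills the constant ambiguity coming from the flux constraint (one can enlarge the test class by a fixed field with unit flux and check the pairing is still zero, using that $\bbI\cdot\bfn$ integrates the flux against the constant part of $p$ but $\bbT\cdot\bfn$ has zero total normal component), giving $\bbT\cdot\bfn=\bfzero$ in $\bfW^{-1/2,2}(\Gammao)$, i.e.\ (\ref{2.31}).

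The main obstacle is the last step: making precise \emph{which} functionals on $\partial\Omega$ are reached by traces of elements of $\Vs$, so that "$\langle\bbT\cdot\bfn,\bfw\rangle=0$ for all $\bfw\in\Vs$" really forces the $\Gammao$-component of $\bbT\cdot\bfn$ to be zero rather than merely an unknown constant multiple of $\bfn$. This requires (i) a lifting result producing, for any prescribed $\bfvarphi\in\bfW^{1/2,2}(\Gammao)$ with $\int_{\Gammao}\bfvarphi\cdot\bfn\;\rmd l=0$, a divergence-free $\bfw\in\bfW^{1,2}(\Omega)$ with trace $\bfvarphi$ on $\Gammao$, vanishing on $\Gammai\cup\Gammaw$ and $\tau$-periodic on $\Gammam\cup\Gammap$ (a Bogovskii-type construction adapted to this mixed/periodic geometry, of the same flavour as Lemma \ref{L2.5}), and (ii) a short argument — exploiting $\div\bbT=\bfzero$, hence $\int_{\partial\Omega}\bbT\cdot\bfn=\bfzero$ together with the already-known vanishing on the other boundary pieces — to dispose of the flux-constraint loophole. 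Once those two points are in hand, the remaining verifications (periodicity of $p$ and of $\partial\bfu/\partial\bfn$ being consistent, the sign of $\bfn=\bfe_1$ on $\Gammao$, etc.) are routine bookkeeping.
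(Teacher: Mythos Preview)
Your approach is the same as the paper's up to and including the step where you pass from $\langle\bbT\cdot\bfn,\bfw\rangle_{\partial\Omega}=0$ for all $\bfw\in\Vs$ to the conclusion that $\bbT\cdot\bfn$ restricted to $\Gammao$ annihilates every $\bfvarphi\in\bfW^{1/2,2}(\Gammao)$ with $\int_{\Gammao}\bfvarphi\cdot\bfn\,\rmd l=0$; the paper also invokes a density/lifting result from \cite{FeNe3} for exactly this purpose. The gap is in your point (ii), the argument meant to eliminate the residual constant.

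From the zero-flux testing you can only conclude $\bbT\cdot\bfn=c\,\bfn$ in $\bfW^{-1/2,2}(\Gammao)$ for some $c\in\R$. You propose to show $c=0$ via $\int_{\partial\Omega}\bbT\cdot\bfn=\bfzero$ ``together with the already-known vanishing on the other boundary pieces''. But no such vanishing is known: on $\Gammai\cup\Gammaw$ you have Dirichlet data for $\bfu$, which says nothing about $\bbT\cdot\bfn=(-\nu\nabla\bfu+p\,\bbI-\bbF)\cdot\bfn$ there; and on $\Gammam\cup\Gammap$ you do not yet know that $p$ (or $\bbF$) is periodic, so you cannot argue that those contributions cancel. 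In fact, with $p$ normalised to have mean zero as you do, the constant $c$ is generically \emph{not} zero. The paper's remedy is the natural one: having found the mean-zero pressure $p_0$ and the constant $c$ with $(-\nu\nabla\bfu+p_0\bbI-\bbF)\cdot\bfn=-c\,\bfn$ on $\Gammao$, simply set $p:=p_0+c$, so that (\ref{2.31}) holds exactly. The estimate (\ref{2.29}) survives this shift because $|c|$ is controlled by $\|(-\nu\nabla\bfu+p_0\bbI-\bbF)\cdot\bfn\|_{\bfW^{-1/2,2}(\Gammao)}\le c(\Omega)\,\|{-}\nu\nabla\bfu+p_0\bbI-\bbF\|_2$, and $\|p_0\|_2$ is already bounded by the right-hand side of (\ref{2.29}) via the Ne\v{c}as/Bogovskii inequality you cite.
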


\begin{proof}
The existence and uniqueness of the solution $\bfv$ of the
equation $\nu\cA\bfv=\bfF+\nu\br\bfG$ follows from Lemma
\ref{L2.4}.

Denote by $\cCn$ the space of all infinitely differentiable
vector functions in $\Omega$ with a compact support in $\Omega$
and put $\cCns:=\cCs\cap\cCn$. Suppose at first that
$\bfw\in\cCns$. Then
$\langle\cA\bfv,\bfw\rangle_{\sigma}=-\llangle\Delta\bfv,\bfw
\rrangle$ for all $\bfw\in\cCns$, where $\llangle\, .\, ,\, .\,
\rrangle$ denotes the pairing between a distribution in
$\Omega$ and a function from $\cCn$. Similarly, we can write
$\langle\bfF,\bfw\rangle_{\sigma}=\llangle\div\bbF,\bfw\rrangle$
and $\langle\bfG,\bfw\rangle_{\sigma}=
\llangle\Delta\bfg_*,\bfw\rrangle$. Then the equation
$\langle\nu\cA\bfv-\bfF-\nu\, \bfG,\bfw\rangle_{\sigma}=0$ and
the identity $\bfu=\bfg_*+\bfv$ imply that
\begin{displaymath}
\llangle\nu\Delta\bfu+\div\bbF,\bfw\rrangle\ =\ 0 \qquad
\mbox{for all}\ \bfw\in\cCns.
\end{displaymath}
Thus, $\nu\Delta\bfu+\div\bbF$ is a distribution that vanishes
on all divergence--free functions $\bfw\in\cCn$. Due De Rham's
lemma (see \cite[p.~14]{Te}), there exists a distribution $p_0$
in $\Omega$, such that
\begin{equation}
\nu\Delta\bfu+\div\bbF\ =\ \nabla p_0 \label{2.11}
\end{equation}
holds in $\Omega$ in the sense of distributions. As both
$\nu\Delta\bfu$ and $\div\bbF$ can also be naturally identified
with bounded linear functionals on $\bfW^{1,2}_0(\Omega)$,
i.e.~elements of the dual space $\bfW^{-1,2}(\Omega)$, $\nabla
p_0$ belongs to $\bfW^{-1,2}(\Omega)$, too. Applying
\cite[Proposition I.1.2]{Te}, we deduce that $p_0\in
L^2(\Omega)$, it can be chosen so that $\int_{\Omega}p_0\;
\rmd\bfx=0$, and
\vspace{-2pt}
\begin{equation}
\|p_0\|_2\ \leq\ c\, \bigl\| \nu\Delta\bfu+\div\bbF
\bigr\|_{\bfW^{-1,2}}, \label{2.30}
\end{equation}

\vspace{-2pt} \noindent
where $c$ depends only on $\nu$ and $\Omega$.

Since $-\nu\, \nabla\bfu+p_0\br\bbI-\bbF\in
L^2(\Omega)^{2\times 2}$ and $\div(-\nu\,
\nabla\bfu+p_0\br\bbI-\bbF)=\bfzero$, we can apply
\cite[Theorem III.2.2]{Ga} and deduce that $(-\nu\,
\nabla\bfu+p_0\br\bbI-\bbF)\cdot\bfn\in
\bfW^{-1/2,2}(\Gammao)$.

Let $\bfw\in\cCs$. The equations (\ref{2.11}) and
$\nu\cA\bfv=\bfF+\nu\br\bfG$, the formula $\bfu=\bfv+\bfgs$ and
the generalized Gauss identity (see \cite[p.~160]{Ga}) imply
that
\begin{align*}
0\ &=\ \bigl(\div[\nu\nabla\bfu+\bbF-p_0\br\bbI\br],\bfw\bigr)_2 \\
&=\ \blangle (\nu\br\nabla\bfu+\bbF-p_0\br\bbI)\cdot\bfn,\bfw
\brangle_{\Gammao}-\int_{\Omega}[\nu\br\nabla
\bfu+\bbF]:\nabla\bfw\; \rmd\bfx-\int_{\Omega}p_0\, \div\bfw\;
\rmd\bfx \\
&=\ \blangle (\nu\br\nabla\bfu+\bbF-p_0\br\bbI)\cdot\bfn,\bfw
\brangle_{\Gammao}-\blangle\nu\cA\bfv,\bfw\brangle_{\sigma}+
\blangle\nu\bfG,\bfw\brangle_{\sigma}+\blangle\bfF,
\bfw\brangle_{\sigma} \\
&=\ \blangle (\nu\br\nabla\bfu+\bbF-p_0\br\bbI)\cdot\bfn,\bfw
\brangle_{\Gammao}.
\end{align*}
It can be deduced e.g.~from \cite[Sec.~3]{FeNe3} that the set
of traces of all functions from $\cCs$ on $\Gammao$ is dense in
the set of all functions $\bfw\in\bfW^{1/2,2}(\Gammao)$, such
that $\int_{\Gammao}\bfw\cdot\bfn\; \rmd l=0$. Hence there
exists $\cn04\in\R$ such that $\bfu$ and $p$ satisfy
\vspace{-2pt}
\begin{displaymath}
\bigl(\nu\br\nabla\bfu-p_0\br\bbI+\bbF\bigr)\cdot\bfn\ =\
\cc04\br\bfn,
\end{displaymath}

\vspace{-2pt} \noindent
as an equality in $\bfW^{-1/2,2}(\Gammao)$. Put $p:=p_0+\cc04$.
Then $\bfu$ and $p$ satisfy the boundary condition (\ref{2.31})
as an equality in $\bfW^{-1/2,2}(\Gammao)$. It follows from
(\ref{2.11}) that $\bfu$, $p$ satisfy equation (\ref{2.33}) in
the sense of distributions in $\Omega$. Finally, (\ref{2.30})
implies that estimate (\ref{2.29}) holds, too.
\end{proof}

Function $\bfu$ represents a weak solution of the Stokes
problem (\ref{1.11}), (\ref{1.2})--(\ref{1.5}), where
$\bff=\div\bbF$, with the boundary condition (\ref{2.31}) on
$\Gammao$.

The next lemma follows from \cite[Theorem 2.5]{GeHeHi}. It
shows that it is not a loss of generality if we write the right
hand side of equation (\ref{2.33}) in the form $\div\bbF$
instead of just $\bff$. On the other hand, considering the
right hand side of (\ref{2.33}) in the form $\div\bbF$ enables
us to deduce that $\bfv$ and $p$ satisfy (\ref{2.31}), as an
equality in $\bfW^{-1/2,2}(\Gammao)$. An analogue, having just
$\bff\in\bfW^{-1,2}(\Omega)$ instead of $\div\bbF$, would not
be possible.

\begin{lemma} \label{L2.3}
Let $\bff\in\bfW^{-1,2}_0(\Omega)$. Then there exists $\bbF\in
L^2(\Omega)^{2\times 2}$, satisfying $\div\bbF=\bff$ in the
sense of distributions in $\Omega$ and
\begin{equation}
\|\bbF\|_2\ \leq\ c\, \|\bff\|_{\bfW^{-1,2}_0}\, , \label{2.2}
\end{equation}
where $c$ is independent of $\bff$ and $\bbF$.
\end{lemma}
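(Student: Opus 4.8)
The plan is to deduce the statement from two facts: the Riesz representation theorem on $\bfW^{1,2}(\Omega)$, and the solvability, with norm control, of the divergence equation $\div\bfb=h$ for data $h\in L^2(\Omega)$ — the latter being precisely the elementary ingredient recorded in \cite[Theorem 2.5]{GeHeHi}. Throughout the argument $\langle\, .\, ,\, .\, \rangle$ denotes the duality pairing between $\bfW^{-1,2}_0(\Omega)$ and $\bfW^{1,2}(\Omega)$, and $\llangle\, .\, ,\, .\, \rrangle$ the pairing of a distribution in $\Omega$ with a function from $\cCn$.

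First I would apply the Riesz theorem in the Hilbert space $\bfW^{1,2}(\Omega)$ equipped with the scalar product $(\bfu,\bfv)_2+(\nabla\bfu,\nabla\bfv)_2$, which induces the usual norm: given $\bff\in\bfW^{-1,2}_0(\Omega)$ there is a unique $\bfw\in\bfW^{1,2}(\Omega)$ with
\[
\langle\bff,\bfv\rangle\ =\ (\bfw,\bfv)_2+(\nabla\bfw,\nabla\bfv)_2 \qquad\text{for every }\bfv\in\bfW^{1,2}(\Omega),
\]
and $\|\bfw\|_{1,2}=\|\bff\|_{\bfW^{-1,2}_0}$. For $\bfv\in\cCn$ the second term is $(\nabla\bfw,\nabla\bfv)_2=-\llangle\Delta\bfw,\bfv\rrangle$, i.e.\ it is already the action of the divergence of the $L^2$ tensor $\nabla\bfw$; only the zeroth--order term $(\bfw,\bfv)_2$ has to be rewritten. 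To that end I would represent each component $w_i\in L^2(\Omega)$ of $\bfw$ as a divergence: extend $w_i$ by zero to a ball $B\supset\overline{\Omega}$, let $\Psi_i\in W^{1,2}_0(B)$ solve $\Delta\Psi_i=w_i$ in the weak sense, and set $H_{ij}:=\partial_j\Psi_i|_{\Omega}$. The energy identity together with the Poincar\'e inequality on $B$ gives $\|\nabla\Psi_i\|_{L^2(B)}\le c\,\|w_i\|_2$, so the tensor $\bbH:=(H_{ij})\in L^2(\Omega)^{2\times 2}$ satisfies $\div\bbH=\bfw$ in the sense of distributions in $\Omega$ and $\|\bbH\|_2\le c\,\|\bfw\|_2$. (No regularity of $\partial\Omega$ is needed here; this is exactly the place where \cite[Theorem 2.5]{GeHeHi} enters, and it may equally be read off a Bogovskii--type right inverse of the divergence operator.)

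It then remains to set $\bbF:=\bbH-\nabla\bfw\in L^2(\Omega)^{2\times 2}$ and to test the Riesz identity against an arbitrary $\bfv\in\cCn$. Integration by parts gives $(\bfw,\bfv)_2=-(\bbH,\nabla\bfv)_2$, whence
\[
\langle\bff,\bfv\rangle\ =\ -(\bbH,\nabla\bfv)_2+(\nabla\bfw,\nabla\bfv)_2\ =\ -(\bbF,\nabla\bfv)_2\ =\ \llangle\div\bbF,\bfv\rrangle
\]
for every $\bfv\in\cCn$, which says precisely that $\div\bbF=\bff$ in the sense of distributions in $\Omega$. The estimate (\ref{2.2}) then follows at once from $\|\bbF\|_2\le\|\bbH\|_2+\|\nabla\bfw\|_2\le c\,\|\bfw\|_2+\|\bfw\|_{1,2}\le c\,\|\bff\|_{\bfW^{-1,2}_0}$, with $c=c(\Omega)$.

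The only step that is not entirely routine is the one in the middle: realizing the lower--order contribution $(\bfw,\cdot)_2$ itself as the divergence of an $L^2$ tensor with controlled norm. Everything else is Riesz' theorem and integration by parts. In the actual write-up I would therefore simply invoke \cite[Theorem 2.5]{GeHeHi} for that step and let the computation above carry the remainder.
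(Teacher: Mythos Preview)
Your argument is correct. The paper itself does not prove this lemma at all: it simply states that the result ``follows from \cite[Theorem 2.5]{GeHeHi}'' and moves on. You go further and give a self-contained proof via Riesz representation on $\bfW^{1,2}(\Omega)$ together with the Poisson equation on a containing ball, which is a clean and elementary route.

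One small mismatch worth flagging: you describe the middle step (writing $\bfw\in\bfL^2(\Omega)$ as $\div\bbH$ with $\bbH\in L^2$) as ``exactly the place where \cite[Theorem 2.5]{GeHeHi} enters.'' In fact that reference treats the divergence equation directly in negative-order Sobolev spaces, so it delivers the full statement of the lemma in one stroke rather than just the $L^2$ step you isolate. Your Poisson-on-a-ball construction already handles that step without any outside input, so you could drop the citation there entirely; conversely, if you do invoke \cite{GeHeHi}, it subsumes your whole argument, not just the middle piece. Either way the proof stands.
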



\section{The strong steady Stokes problem} \label{S3}

\paragraph{Notation.} \ In section \ref{S3}, we also use this
notation:

\begin{list}{$\circ$}
{\setlength{\topsep 0pt}
\setlength{\itemsep 2pt}
\setlength{\leftmargin 10pt}
\setlength{\rightmargin 0pt}
\setlength{\labelwidth 6pt}}

\item
$\Ls$ is the closure of $\cCs$ in $\bfL^2(\Omega)$. Functions
from $\Ls$ are divergence--free in the sense of distributions
in $\Omega$ and their normal components (in the sense of
traces) belong to the space $W^{-1/2,2}(\partial\Omega)$ (the
dual to $W^{1/2,2}(\partial\Omega)$, see \cite[Theorem
III.2.2]{Ga}). Moreover, $\bfv\cdot\bfn=\bfzero$ holds as an
equality in $W^{-1/2,2}(\Gammai\cup\Gammaw)$,
\begin{displaymath}
    \bfv(x_1,x_2+\tau)\cdot\bfn(x_1,x_2+\tau)\ =\
    -\bfv(x_1,x_2)\cdot\bfn(x_1,x_2)
\end{displaymath}
holds as an equality in $W^{-1/2,2}(\Gammam)$ and $\langle
\bfv\cdot\bfn,\br 1\rangle_{\Gammao}=0$, where $\langle\, .\,
,\, .\, \rangle_{\Gammao}$ denotes the duality pairing between
$W^{-1/2,2}(\Gammao)$ and $W^{1/2,2}(\Gammao)$.

\item
$\R^2_{d-}$ denotes the half-plane $x_1<d$. Recall that
$\R^2_{(0,d)}:=\{\bfx\equiv(x_1,x_2)\in\R^2;\ 0<x_1<d\}$ and
$\cO=\R^2_{(0,d)}\smallsetminus\cup_{k=-\infty}^{\infty}P_k$.

\item
$W^{k,2}_{\rm per}(\cO)$ (for $k\in\N$) denotes the space of
functions from $W^{k,2}_{loc}(\cO)$, $\tau$--periodic in
variable $x_2$.

\item
$W^{k,2}_{\rm per}(\Omega)$ is the space of functions, that can
be extended from $\Omega$ to $\cO$ as functions in
$W^{k,2}_{\rm per}(\cO)$. (Obviously, the traces of these
functions on $\Gammam$ and $\Gammap$ satisfy the condition of
periodicity, analogous to (\ref{1.5}).)

\item
$W^{k-1/2,2}_{per}(\gammao)$ (for $k\in\N$) is the space of
$\tau$--periodic functions in $W^{k-1/2,2}_{loc}(\gammao)$.

\item
$W^{k-1/2,2}_{\rm per}(\Gammao)$ is the space of functions from
$W^{k-1/2,2}(\Gammao)$, that can be extended from $\Gammao$ to
$\gammao$ as functions in $W^{k-1/2,2}_{\rm per}(\gammao)$. The
space $W^{k-1/2,2}_{\rm per}(\Gammai)$ is defined by analogy.

\item
Spaces of corresponding vector functions are again denoted by
boldface letters and spaces of corresponding tensor functions
are marked by the superscript $2\times 2$.

\end{list}

\begin{lemma} \label{L2.9}
Let $n\in\N$ and the functions $g_0\in W^{n+1/2,2}_{\rm
per}(\Gammao)$, $g_1\in W^{n-1/2,2}_{\rm per}(\Gammao)$,
$\dots,$ $g_n\in W^{1/2,2}_{\rm per}(\Gammao)$ be given. Let
$\delta>0$ be so small that the profile $P_0$ (see Fig.~1) is
on the left from the straight line $x_1=d-\delta$. Then there
exists $g_*\in W^{n+1,2}_{\rm per}(\Omega)$, such that
\vspace{-2pt}
\begin{displaymath}
g_*=g_0, \quad
\partial_1 g_*=g_1, \quad \dots, \quad
\partial_1^{n} g_*=g_n \qquad \mbox{on}\ \Gammao
\end{displaymath}

\vspace{-2pt} \noindent
in the sense of traces, $\supp g_*=\{\bfx=(x_1,x_2)\in\Omega;\
d-\delta\leq x_1\leq d\}$ and
\vspace{-2pt}
\begin{equation}
\|g_*\|_{n+1,2}\ \leq\ c\, \bigl( \|g_0\|_{n+1/2,2;\, \Gammao}+
\|g_1\|_{n-1/2,2;\, \Gammao}+\ldots+\|g_n\|_{1/2,2;\,
\Gammao}\bigr), \label{2.19}
\end{equation}

\vspace{-2pt} \noindent
where $c=c(\Omega,n,\delta)$.
\end{lemma}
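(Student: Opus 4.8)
The plan is to view Lemma~\ref{L2.9} as a $\tau$-periodic, one-sided inverse trace theorem for the \emph{flat} boundary piece $\Gammao\subset\gammao=\{x_1=d\}$, and to prove it by an explicit construction using the Fourier series in the periodic variable $x_2$. By the hypothesis on $\delta$, the part of $\cO$ contained in the strip $\{d-\delta\le x_1\le d\}$ is exactly the full periodic strip $S_\delta:=(d-\delta,d)\times\bigl(\R/(\tau\Z)\bigr)$; hence it is enough to construct $g_*$ on $S_\delta$, $\tau$-periodic in $x_2$, with $g_*$ and all its derivatives through order $n$ vanishing near $x_1=d-\delta$, with the prescribed traces $\partial_1^m g_*|_{x_1=d}=g_m$ ($m=0,\dots,n$), and satisfying~(\ref{2.19}); extension by zero then produces the required element of $W^{n+1,2}_{\rm per}(\Omega)$.

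First I would expand $g_j(x_2)=\sum_{k\in\Z}\hat g_{j,k}\,e^{2\pi\rmi kx_2/\tau}$, so that $g_j\in W^{n+1/2-j,2}_{\rm per}(\Gammao)$ is equivalent to $\sum_k(1+k^2)^{n+1/2-j}|\hat g_{j,k}|^2<\infty$, with equivalence of norms. Fix a cut-off $\chi\in C^\infty(\R)$ with $\chi\equiv0$ on $(-\infty,d-\delta]$ and $\chi\equiv1$ on $[d-\tfrac{\delta}{2},d]$, and fix profiles $\psi_0,\dots,\psi_n\in C^\infty((-\infty,0])$ that decay exponentially with all their derivatives at $-\infty$ and satisfy $\psi_j^{(m)}(0)=\delta_{jm}$ for $0\le m,j\le n$; such $\psi_j$ exist because the matrix $\big(\tfrac{\rmd^m}{\rmd t^m}[t^le^t/l!]\big|_{t=0}\big)_{0\le m,l\le n}=\big(\binom{m}{l}\big)$ is lower triangular with unit diagonal, so one takes suitable linear combinations of the functions $t^le^t/l!$. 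With $\lambda_k:=2\pi|k|/\tau$, put
\[
  g_*(x_1,x_2):=\chi(x_1)\sum_{k\in\Z}\hat g_k(x_1)\,e^{2\pi\rmi kx_2/\tau},\qquad
  \hat g_k(x_1):=
  \begin{cases}
    \displaystyle\sum_{j=0}^{n}\hat g_{j,k}\,\lambda_k^{-j}\,\psi_j\bigl(\lambda_k(x_1-d)\bigr), & k\neq0,\\[1.5mm]
    \displaystyle\sum_{j=0}^{n}\hat g_{j,0}\,\frac{(x_1-d)^j}{j!}, & k=0.
  \end{cases}
\]
Since $\chi$ and all its derivatives except $\chi$ itself vanish at $x_1=d$, Leibniz' rule together with $\psi_j^{(m)}(0)=\delta_{jm}$ gives $\partial_1^m g_*|_{\Gammao}=\sum_k\hat g_{m,k}\,e^{2\pi\rmi kx_2/\tau}=g_m$ for $m=0,\dots,n$; periodicity in $x_2$ and the support condition hold by construction.

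The main point is the estimate. By Plancherel in $x_2$, $\|g_*\|_{n+1,2}^2$ is equivalent to $\sum_{m=0}^{n+1}\sum_k(1+k^2)^{n+1-m}\int_{d-\delta}^{d}|\partial_1^m\hat g_k|^2\,\rmd x_1$, so one has to bound each $\int_{d-\delta}^{d}|\partial_1^m\hat g_k|^2\,\rmd x_1$. For $k\neq0$ and on the region where $\chi\equiv1$, since $\partial_1^m[\hat g_{j,k}\lambda_k^{-j}\psi_j(\lambda_k(x_1-d))]=\hat g_{j,k}\lambda_k^{m-j}\psi_j^{(m)}(\lambda_k(x_1-d))$, the substitution $s=\lambda_k(x_1-d)$ yields $\int|\psi_j^{(m)}(\lambda_k(x_1-d))|^2\,\rmd x_1\le\lambda_k^{-1}\|\psi_j^{(m)}\|_{L^2(-\infty,0)}^2$; on the complementary region $\{\chi'\neq0\}\subset\{|x_1-d|\ge\delta/2\}$ every term is bounded by $c\,e^{-c\lambda_k\delta/2}$ because of the exponential decay of the $\psi_j$, hence is negligible after summation over $k$. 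Consequently $\int_{d-\delta}^{d}|\partial_1^m\hat g_k|^2\,\rmd x_1\le c\sum_{j=0}^{n}|\hat g_{j,k}|^2\lambda_k^{2m-2j-1}+(\text{exponentially small})$; multiplying by $(1+k^2)^{n+1-m}\asymp\lambda_k^{2(n+1-m)}$ makes the power of $\lambda_k$ collapse to $2(n-j)+1$, \emph{independently of} $m$. Summing over $m$ and then over $k\neq0$, and bounding the $k=0$ contribution (a polynomial of degree $\le n$ times $\chi$ on a bounded interval) by $c\sum_j|\hat g_{j,0}|^2$, one arrives at $\|g_*\|_{n+1,2}^2\le c(\Omega,n,\delta)\sum_{j=0}^{n}\|g_j\|_{n+1/2-j,2;\,\Gammao}^2$, which contains both the convergence of the series in $W^{n+1,2}$ (so that $g_*\in W^{n+1,2}_{\rm per}(\Omega)$) and estimate~(\ref{2.19}).

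The only genuinely delicate step is this last bookkeeping of the powers of $\lambda_k$: it is precisely the vanishing factor $\lambda_k^{-j}$ — which trades $j$-fold vanishing of $\partial_1^i g_*$ at $\Gammao$ against regularity in $x_2$ — that makes the total exponent $2(n+1-m)+2(m-j)-1=2(n-j)+1$ come out independent of $m$ and equal to twice the Sobolev order $n+1/2-j$ carried by $g_j$, and this is what forces the scaling $\psi_j(\lambda_k(x_1-d))$ of the profiles. Everything else (existence of the $\psi_j$, the cut-off manipulations, the $k=0$ mode) is routine. Alternatively one could invoke a classical half-plane inverse trace theorem (cf.~\cite[Ch.~I]{LiMa}) in its periodic form; the construction above is in effect its proof, specialized to the periodic strip $S_\delta$ and arranged so as to have support near $\Gammao$.
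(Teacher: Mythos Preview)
Your argument is correct and self-contained. The route, however, differs from the one the paper takes. The paper does not carry out an explicit construction: it first extends the data $g_0,\dots,g_n$ $\tau$-periodically from $\Gammao$ to the whole line $\gammao$, then invokes (a periodic variant of) the half-space inverse trace theorem \cite[Theorem~II.4.4]{Ga} to obtain a $\psi\in W^{n+1,2}_{loc}(\R^2_{d-})$, $\tau$-periodic in $x_2$, with the prescribed normal traces, and finally multiplies by a smooth $\tau$-periodic cut-off $\eta(x_1,x_2)$ equal to $1$ near $\gammao$ and $0$ near $\gammai\cup\Gammaw$. In other words, the paper outsources exactly the computation you perform to a cited result and only sketches why the periodic modification goes through.

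What each approach buys: the paper's argument is short and conceptual but leans on the reader accepting that the cited (non-periodic) theorem adapts to the periodic setting. Your Fourier-series construction with scaled profiles $\psi_j(\lambda_k(x_1-d))$ is precisely a proof of that periodic inverse trace theorem, so it is longer but fully explicit, makes the dependence $c=c(\Omega,n,\delta)$ visible, and uses only a cut-off in $x_1$ rather than a two-variable periodic cut-off. Your closing remark that one could ``alternatively invoke a classical half-plane inverse trace theorem in its periodic form'' is, in fact, exactly what the paper does. One small notational point: in your Plancherel identity the symbol $\hat g_k$ should be read as the Fourier coefficient $\chi\,\hat g_k$ of $g_*$; you handle this correctly when you split into the region $\{\chi\equiv1\}$ and the region $\{\chi'\neq0\}$, but it would be cleaner to say so explicitly.
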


\noindent
{\it Principles of the proof} \ The functions $g_0,\, g_1\,
\dots,\, g_n$ can be extended from $\Gammao$ to $\gammao$ so
that the extended functions (which we again denote by $g_0,\,
g_1\, \dots,\, g_n$) are in the spaces $W^{n+1/2}_{\rm
per}(\gammao)$, $W^{n-1/2,2}_{\rm per}(\gammao)$, $\dots,$
$W^{1/2,2}_{\rm per}(\gammao)$, respectively. Then we apply a
variant of Theorem II.4.4 in \cite{Ga}, which enables us to
deduce that there exists a function $\psi\in
W^{n+1,2}_{loc}(\R^2_{d-})$, $\tau$--periodic in variable
$x_2$, such that
\vspace{-2pt}
\begin{displaymath}
\psi=g_0, \quad \partial_1\psi=g_1, \quad \dots, \quad
\partial_1^{n}\psi=g_n \qquad \mbox{on}\ \Gammao
\end{displaymath}

\vspace{-2pt} \noindent
in the sense of traces and satisfying estimate (\ref{2.19}).
Note that Theorem II.4.4 from \cite{Ga} in fact deals with
functions $(g_0,\br g_1,\br \dots,\br g_n)\in
W^{n+1/2,2}(\gammao)\times\ W^{n-1/2,2}(\gammao)\times
\ldots\times W^{1/2,2}(\gammao)$ and the extension $\psi$ is in
the space $W^{n+1,2}(\R^2_{d-})$. However, the proof (which is
based on \cite[Chap.~2, Theorems 5.5, 5.8]{Ne}) can be modified
so that the theorem can also be applied to $(g_0,\br g_1,\br
\dots,\br g_n)\in W^{n+1/2,2}_{\rm per} (\gammao)\times\
W^{n-1/2,2}_{\rm per}(\gammao)\times \dots\times W^{1/2,2}_{\rm
per}(\gammao)$ and the extension is in
$W^{2,2}_{loc}(\R^2_{d-})$, $\tau$--periodic in variable $x_2$.
Then multiplying $\psi$ by an infinitely differentiable and
$\tau$--periodic in variable $x_2$ cut--off function $\eta$ in
$\R^2_{d-}$, such that $\eta=1$ in some neighborhood of
$\gammao$ and $\eta=0$ in the neighborhood of $\gammai$ and
$\Gammaw$, we obtain function $g_*$, whose restriction to
$\Omega$ has the properties stated in the lemma.

\begin{lemma} \label{L2.8}
There exists a bounded bilinear operator
$\cF:\bfL^2(\Omega)\times \bfW^{1/2,2}_{\rm per}(\Gammao)\to
W^{1,2}_{\rm per} (\Omega)^{2\times 2}$, such that if $\,
\bff\in\bfL^2(\Omega)$, $\, \bfh\in\bfW^{1/2,2}_{per}(\Gammao)$
and $\bbF=\cF(\bff,\bfh)$ then $\div\bbF=\bff$ a.e.~in
$\Omega$, $\bbF=\bbO$ (the zero tensor) on $\Gammaw$ and
$\bbF\cdot\bfn=\bfh$ a.e.~on $\Gammao$ in the sense of traces.
\end{lemma}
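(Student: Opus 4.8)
\noindent
{\it Plan of the proof} \ The plan is to write the desired tensor as $\bbF=\bbF_0+\bbG$, where $\bbF_0$ carries the trace datum $\bfh$ on $\Gammao$ and is produced by Lemma~\ref{L2.9}, while $\bbG$ is an interior correction of the divergence, produced by a Bogovskii--type right inverse of the divergence operator. Each of the two pieces will be the value of a bounded \emph{linear} map, so the composite $\cF$ is a bounded operator, linear in the pair $(\bff,\bfh)$.

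First I would fix $\delta>0$ as in Lemma~\ref{L2.9} (so that $P_0$ lies to the left of the line $x_1=d-\delta$), write $\bfh=(h_1,h_2)$, and apply Lemma~\ref{L2.9} with $n=0$ separately to $h_1$ and to $h_2$, choosing the extensions linearly in the data (the construction sketched after Lemma~\ref{L2.9}, being based on a linear trace--extension theorem, permits this). Denoting the two extensions by $F_{11}$ and $F_{21}$ and setting $F_{12}=F_{22}=0$, I obtain a tensor $\bbF_0\in W^{1,2}_{\rm per}(\Omega)^{2\times2}$, depending linearly and boundedly on $\bfh$, with support in $\{\bfx\in\Omega;\ d-\delta\le x_1\le d\}$ --- so that $\bbF_0=\bbO$ on $\Gammaw$ --- and such that $\bbF_0\cdot\bfn=(F_{11},F_{21})=\bfh$ on $\Gammao$ (recall $\bfn=\bfe_1$ there), with $\|\bbF_0\|_{1,2}\le c\,\|\bfh\|_{1/2,2;\,\Gammao}$. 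I then set $\bff_*:=\bff-\div\bbF_0\in\bfL^2(\Omega)$, which depends linearly on $(\bff,\bfh)$ and satisfies $\|\bff_*\|_2\le c\,(\|\bff\|_2+\|\bfh\|_{1/2,2;\,\Gammao})$.

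It remains to construct $\bbG\in W^{1,2}_{\rm per}(\Omega)^{2\times2}$, depending linearly and boundedly on $\bff_*$, with $\div\bbG=\bff_*$ in $\Omega$, $\bbG=\bbO$ on $\Gammaw$ and $\bbG\cdot\bfn=\bfzero$ on $\Gammao$; then $\bbF:=\bbF_0+\bbG$, $\cF(\bff,\bfh):=\bbF$, has all the required properties. Since $(\div\bbG)_i=\partial_1\bbG_{i1}+\partial_2\bbG_{i2}$, it suffices to solve, for $i=1$ and $i=2$ separately, a divergence equation for the vector field $\bfw^{(i)}:=(\bbG_{i1},\bbG_{i2})$: I look for $\bfw^{(i)}\in\bfW^{1,2}(\Omega)$ with $\div\bfw^{(i)}=f_{*i}$ in $\Omega$, with zero trace on $\Gammao\cup\Gammaw$, satisfying the condition of periodicity on $\Gammam\cup\Gammap$, and with $\|\bfw^{(i)}\|_{1,2}\le c\,\|f_{*i}\|_2$ --- note that \emph{no} compatibility condition $\int_\Omega f_{*i}\,\rmd\bfx=0$ is needed, since no condition is imposed on $\bfw^{(i)}$ along $\Gammai$, through which the net flux may leave. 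To construct $\bfw^{(i)}$ I would pass to the $\tau$--periodic setting: extend $f_{*i}$ $\tau$--periodically to $\cO$ and then by zero to the enlarged $\tau$--periodic Lipschitz domain $\widehat\cO:=\{\bfx\in\R^2;\ -1<x_1<d\}\smallsetminus\bigcup_{k\in\Z}P_k$, obtained from $\cO$ by attaching the slab $\{-1<x_1<0\}$ across $\gammai$ (so that $\gammai$ becomes an interior line of $\widehat\cO$); call this extension $\varphi$. I fix once and for all a $\tau$--periodic $\theta\in C^\infty(\R^2)$ with $\supp\theta\subset\{-1<x_1<0\}$ and $\int_{(-1,0)\times(0,\tau)}\theta\,\rmd\bfx=1$, and set $\alpha:=\int_\Omega f_{*i}\,\rmd\bfx$, so that $\varphi-\alpha\theta$ has zero mean over one period of $\widehat\cO$. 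Applying the classical Bogovskii operator on one period of $\widehat\cO$ --- equivalently, on the bounded Lipschitz domain $\widehat\cO/(\tau\Z)$ --- to $\varphi-\alpha\theta$ yields a $\tau$--periodic $\bfw\in\bfW^{1,2}_{loc}(\widehat\cO)$ with $\div\bfw=\varphi-\alpha\theta$, with zero trace on $\partial\widehat\cO$ (in particular on $\Gammao$ and on all the profile boundaries), and with $\|\bfw\|_{1,2}\le c\,(\|\varphi\|_2+|\alpha|)\le c\,\|f_{*i}\|_2$ over one period. Its restriction $\bfw^{(i)}:=\bfw|_\Omega$ then satisfies $\div\bfw^{(i)}=f_{*i}$ in $\Omega$ (there $\theta\equiv0$ and $\varphi=f_{*i}$), has zero trace on $\Gammao\cup\Gammaw$, satisfies the periodicity on $\Gammam\cup\Gammap$ (being the restriction of a $\tau$--periodic function) and obeys the stated bound. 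Putting $\bbG_{i\cdot}:=\bfw^{(i)}$ for $i=1,2$ completes the construction of $\bbG$, and $\cF:(\bff,\bfh)\mapsto\bbF_0+\bbG$ is then the asserted bounded operator, all the intermediate maps being linear.

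I expect the only step that is not a routine verification to be the existence of the periodic Bogovskii right inverse of the divergence on $\widehat\cO$ with the stated trace behaviour and $\bfW^{1,2}$--bound; this is the classical Bogovskii construction transplanted to a bounded Lipschitz domain on the cylinder $\R\times(\R/\tau\Z)$ (equivalently, to a $\tau$--periodic Lipschitz strip), and the purpose of the auxiliary slab $\{-1<x_1<0\}$ together with the correction term $\alpha\theta$ is precisely to make the zero--mean compatibility condition automatic even though no boundary condition is imposed on $\gammai$.
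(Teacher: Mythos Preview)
Your argument is correct, but it follows a genuinely different route from the paper's proof.

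The paper decomposes $\bbF=\bbF_0+\bbH_0+\bbH_1+\bbH_2$ into four pieces. For the divergence part $\bbF_0$ it extends $\bff$ \emph{oddly} across $\Gammai$ to the reflected domain $\Omegat=\Omega_-\cup\Gammai^0\cup\Omega$, so that the extension has mean zero automatically, and then applies the standard Bogovskii operator on the planar Lipschitz domain $\Omegat$ to obtain $\bbF_0\in W^{1,2}_0(\Omegat)^{2\times2}$; the zero trace on $\Gammam\cup\Gammap$ makes periodicity trivial. For the boundary datum $\bfh$ the paper splits off the mean $\overline{\bfh}$: the mean--zero part $\bfh_0$ is realised by a divergence--free tensor $\bbH_0$ built from stream functions $\nabla^\perp\psi_i$ (so no extra divergence is created), while the constant part $\overline{\bfh}$ is handled by a cut--off $\bbH_1=\zeta(x_1)\overline{\bbH}$ whose induced divergence $\zeta'\overline{\bfh}$ is removed by a second Bogovskii correction $\bbH_2$ on $\Omegat$.

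Your two--piece construction is leaner: you realise the full trace $\bfh$ at once via Lemma~\ref{L2.9} (no need to separate the mean), and you absorb all divergence corrections into a single Bogovskii step on an enlarged periodic strip, circumventing the mean--zero constraint by the $\alpha\theta$ correction supported in the auxiliary slab. The price is that your Bogovskii step lives on a $\tau$--periodic domain (equivalently on a cylinder), which is not literally covered by the standard statement in \cite{Ga}; as you note, this is the only non--routine point. It is, however, easily justified: apply the ordinary Bogovskii operator on a single Lipschitz fundamental domain $\widehat\Omega\subset\widehat\cO$ to obtain $\bfw\in\bfW^{1,2}_0(\widehat\Omega)$, and then extend $\tau$--periodically --- the zero traces on the upper and lower boundaries of $\widehat\Omega$ make the periodic extension belong to $\bfW^{1,2}_{loc}(\widehat\cO)$. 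In effect, both proofs ultimately secure periodicity by producing pieces that vanish on $\Gammam\cup\Gammap$; the paper does this via reflection across $\Gammai$, you via enlargement past $\Gammai$.
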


\begin{proof}
Denote by $\Omega_-$ the mirror image of $\Omega$ with respect
to the line $x_1=0$. Thus, $\Omega_-:=\{(x_1,x_2)\in\R^2;\
(-x_1,x_2)\in\Omega\}$. Furthermore, put $\Omegat:=\Omega_-\cup
\Gammai^0\cup\Omega$. We will construct $\bbF$ in the form
$\bbF=\bbF_0+\bbH_0+\bbH_1+\bbH_2$, where the tensor functions
$\bbF_0,\, \dots,\, \bbH_2$ are described below.

\vspace{4pt}
1) {\it Function $\bbF_0$:} \ Extend $\bff$ from $\Omega$ to
$\Omegat$ so that the extended function (we denote it also by
$\bff$) is odd in variable $x_1$. Then $\bff\in\bfL^2(\Omegat)$
and $\int_{\Omegat}\bff\; \rmd\bfx=\bfzero$. Due to
\cite[Theorem III.3.3] {Ga}, there exists $\bbF_0\in
W^{1,2}_0(\Omegat)^{2\times 2}$, such that $\div\bbF_0=\bff$ in
$\Omegat$ and $\|\bbF_0\|_{1,2;\, \Omegat}\leq c\,
\|\bff\|_{2;\, \Omegat}$, where $c=c(\Omegat)$. Hence we also
have
\begin{equation}
\|\bbF_0\|_{1,2}\ \leq\ c\, \|\bff\|_2. \label{2.20}
\end{equation}

\vspace{4pt}
2) {\it Function $\bbH_0$:} \ Put
$\overline{\bfh}\equiv(\overline{h}_1,\overline{h}_2):=
\tau^{-1} \int_{\Gammao}\bfh\, \rmd l$ and define
$\bfh_0\equiv(h_{01}, h_{02})^T:=\bfh-\overline{\bfh}$ on
$\Gammao$. Naturally, the function $\bfh_0$ is in
$\bfW^{1/2,2}_{\rm per}(\Gammao)$ and its advantage is that
$\int_{\Gammao}\bfh_0\, \rmd l=\bfzero$. We construct $\bbH_0$
so that its $i$--th row (for $i=1,2$) has the form
$\nabla^{\perp} \psi_i$, where $\nabla^{\perp}=
(\partial_2,-\partial_1)$ and $\psi_i$ is an appropriate
function from $W^{2,2}(\Omega)$, which is defined below, in two
steps:

2a) We define $\psi_i$ at first on the line segment $\Gammao$
by the formula
\begin{displaymath}
\psi_i(d,x_2):\ =\ \int_{b_{02}}^{x_2} h_{0i}(d,\vartheta)\,
\rmd\vartheta,
\end{displaymath}
where $b_{02}$ is the $x_2$--coordinate of point $B_0$. (See
Fig.~1.) Since $h_{0i}\in W^{1/2,2}_{per}(\Gammao)$ and
$\int_{b_{02}}^{b_{02}+\tau}h_{0i}(d,\vartheta)\,
\rmd\vartheta=0$, function $\psi_i$ is in $W^{3/2,2}_{\rm per}
(\Gammao)$. Obviously,
\begin{equation}
\partial_2\psi_i\ =\ h_{0i} \qquad \mbox{a.e.~on}\ \Gammao.
\label{2.41}
\end{equation}

2b) Applying Lemma \ref{L2.9}, we deduce that there exists an
extension of $\psi_i$ from $\Gammao$ to $\Omega$ (which we
again denote by $\psi_i$), such that $\psi_i\in W^{2,2}_{\rm
per}(\Omega)$, $\partial_1\psi_i=0$ on $\Gammao$,
\begin{equation} \|\psi_i\|_{2,2}\ \leq\ c\,
\|\psi_i\|_{3/2,2;\, \Gammao}\ \leq\ c\, \|h_{0i}\|_{1/2,2;\,
\Gammao} \label{2.43}
\end{equation}
and $\psi_i$ is supported in $\{\bfx=(x_1,x_2)\in\Omega;\
d-\delta\leq x_1\leq d\}$, where $\delta>0$ is so small that
all points $\bfx=(x_1,x_2)$ on $\Gammaw$ satisfy
$x_1<d-\delta$.

The scalar functions $\psi_i$ (for $i=1,2$) satisfy
(\ref{2.41}). The vector functions $\nabla^{\perp}\psi_i$ are
in $\bfW^{1,2}_{\rm per}(\Omega)$ and satisfy
$\nabla^{\perp}\psi_i\cdot\bfn=h_{0i}$ a.e.~on $\Gammao$. Since
the $i$--the row in the tensor function $\bbH_0$ equals
$\nabla^{\perp}\psi_i$, we have $\bbH_0\in W^{1,2}_{\rm
per}(\Omega)^{2\times 2}$ and $\div\bbH_0=\bfzero$. Moreover,
$\bbH_0=\bbO$ on $\Gammai\cup\Gammaw$, $\bbH_0\cdot\bfn=\bfh_0$
on $\Gammao$ and
\begin{equation}
\|\bbH_0\|_{1,2}\ \leq\ c\, \|\bfh_0\|_{1/2,2;\, \Gammao},
\label{2.25}
\end{equation}
where $c=c(\Omega)$.

\vspace{4pt}
3) {\it Function $\bbH_1$:} \ Denote by $\overline{\bbH}$ the
constant tensor with the entries
$\overline{H}_{11}=\overline{h}_1$, $\overline{H}_{12}=0$,
$\overline{H}_{21}=\overline{h}_2$, $\overline{H}_{22}=0$. Let
$\zeta$ be an even infinitely differentiable function of one
variable $x_1$ for $x_1\in[-d,d]$, such that
$\zeta(-d)=\zeta(d)=1$ and
$\supp\zeta\subset[-d,-d+\delta]\cup[d-\delta,d]$, where
$\delta>0$ is so small that the profile $P_0$ lies on the left
from the straight line $x_1=d-\delta$. Define
$\bbH_1(x_1,x_2):=\zeta(x_1)\, \overline{\bbH}$. The tensor
function $\bbH_1$ is in $W^{1,2}_{\rm per}(\Omega)^{2\times 2}$
and satisfies $\bbH_1\cdot\bfn=\overline{\bfh}$ on $\Gammao$,
$\bbH_1=\bbO$ on $\Gammai\cup\Gammaw$ and
$\div\bbH_1=\zeta'(x_1)\, \overline{\bfh}$ in $\Omegat$.

\vspace{4pt}
4) {\it Function $\bbH_2$:} \ Since
$\int_{\Omegat}\zeta'(x_1)\, \overline{\bfh}\;
\rmd\bfx=\bfzero$, there exists (by \cite[Theorem III.3.3]{Ga})
a tensor function $\bbH_2\in W^{1,2}_0(\Omegat)^{2\times 2}$,
satisfying $\div\bbH_2=-\zeta'(x_1)\, \overline{\bfh}$ in
$\Omegat$. The restriction of $\bbH_2$ to $\Omega$ is in
$W^{1,2}_{\rm per}(\Omega)^{2\times 2}$ and $\bbH_2$
particularly satisfies $\bbH_2\cdot\bfn=\bfzero$ on $\Gammao$.

\vspace{4pt}
Using the properties of $\bbF_0$, $\bbH_0$, $\bbH_1$ and
$\bbH_2$, we observe that $\bbF:=\bbF_0+\bbH_0+\bbH_1+\bbH_2$
has all the properties stated in the lemma. The whole procedure
can be formalized so that the mapping $(\bff,\bfh)\mapsto\bbF$
is a bilinear operator. One can simply derive from
(\ref{2.20}), (\ref{2.25}) and the definition of $\bbH_1$ and
$\bbH_2$ that this operator is bounded from
$\bfL^2(\Omega)\times \bfW^{1/2,2}_{\rm per}(\Omega)$ to
$W^{1,2}_{\rm per} (\Omega)^{2\times 2}$.
\end{proof}

\vspace{4pt}
The next lemma generalizes Lemma \ref{L2.5}:

\begin{lemma} \label{L2.1}
Let $m\in\{0\}\cup\N$ and $\bfg\in\bfW^{m+1/2,2}_{\rm
per}(\Gammai)$ be given. Then there exists a divergence--free
extension $\bfgs\in\bfW^{m+1,2}_{\rm per}(\Omega)$ with the
properties c) and d) from Lemma \ref{L2.5}, such that
\begin{equation}
\|\bfg_*\|_{m+1,2}\ \leq\ c\, \|\bfg\|_{m+1/2,2;\, \Gammai},
\label{2.21}
\end{equation}
where $c=c(\Omega,m)$.
\end{lemma}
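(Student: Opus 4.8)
\textit{Principles of the proof.}\ \ The plan is to build the extension $\bfgs$ in two stages, mirroring the structure of Lemma \ref{L2.5} but now keeping track of $m$ extra derivatives. First I would treat the case $m=0$, which is essentially covered by Lemma \ref{L2.5}: its conclusions a)--d) give a divergence-free $\bfgs\in\bfW^{1,2}_{\rm per}(\Omega)$ with $\bfgs=(\Phi/\tau)\,\bfe_1$ near $\Gammao$ and $\bfgs=\bfzero$ on $\Gammaw$, satisfying (\ref{2.21}) with $m=0$. (One should remark, as the paper already does after Lemma \ref{L2.5}, that the periodicity hypothesis on $\bfg$ is exactly the membership in $\bfW^{1/2,2}_{\rm per}(\Gammai)$, so nothing new is needed there.) For $m\geq 1$, the idea is to extend $\bfg$ first across $\gammai$ and then into $\cO$, using the higher-order trace-extension machinery.

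Concretely, I would proceed as follows. Since $\bfg\in\bfW^{m+1/2,2}_{\rm per}(\Gammai)$, it extends to a $\tau$-periodic function on $\gammai$ in $\bfW^{m+1/2,2}_{\rm per}(\gammai)$. Apply the periodic variant of the trace-lifting theorem (the same variant of \cite[Theorem II.4.4]{Ga} invoked in the proof of Lemma \ref{L2.9}, with the roles of the coordinates interchanged so that $\gammai=\{x_1=0\}$ plays the role there played by $\gammao$) to obtain a function $\bfpsi\in\bfW^{m+1,2}_{loc}(\R^2_{d-})$, $\tau$-periodic in $x_2$, with $\bfpsi=\bfg$ on $\gammai$ and $\|\bfpsi\|_{m+1,2;\,\Omega}\leq c\,\|\bfg\|_{m+1/2,2;\,\Gammai}$; here I would additionally prescribe the higher normal (i.e. $x_1$-) derivatives of $\bfpsi$ on $\gammai$ to be zero, which is harmless and costs nothing in the estimate. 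Cut $\bfpsi$ off with a smooth $\tau$-periodic-in-$x_2$ function that is $1$ near $\gammai$ and $0$ near $\Gammaw\cup\gammao$; this produces a preliminary extension $\bfpsi_*\in\bfW^{m+1,2}_{\rm per}(\Omega)$ agreeing with $\bfg$ on $\Gammai$, vanishing near $\Gammaw$ and near $\Gammao$, and satisfying the bound (\ref{2.21}). This $\bfpsi_*$ is, however, not divergence-free; set $w:=\div\bfpsi_*\in\bfW^{m,2}_{\rm per}(\Omega)$, which is supported away from $\Gammao$ and satisfies $\int_\Omega w\,\rmd\bfx=\int_{\Gammai}\bfg\cdot\bfe_1\,\rmd l =\Phi$, since the divergence theorem and the support of $\bfpsi_*$ give $\int_\Omega\div\bfpsi_* =-\int_{\Gammai}\bfpsi_*\cdot\bfe_1 = -\Phi$ with the inward orientation of $\bfn=-\bfe_1$ on $\Gammai$; care with signs is needed but it is routine. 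Now also put $\bfq_*:=(\Phi/\tau)\,\zeta(x_1)\,\bfe_1$ with $\zeta$ a smooth cutoff equal to $1$ near $x_1=d$ and $0$ near $\Gammaw\cup\gammai$, as in Lemma \ref{L2.8}, step 3); then $\bfg-\bfg$ bookkeeping aside, the combined field $\bfpsi_*+\bfq_*$ has the correct traces (it equals $(\Phi/\tau)\bfe_1$ near $\Gammao$, equals $\bfg$ on $\Gammai$ since $\zeta=0$ there, and vanishes on $\Gammaw$) but divergence $w+(\Phi/\tau)\zeta'(x_1)$, whose integral over $\Omega$ (indeed over the mirror-doubled $\Omegat$, after the odd-in-$x_1$ reflection, exactly as in the proof of Lemma \ref{L2.8}) vanishes.

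Finally I would correct the divergence. Working in $\Omegat:=\Omega_-\cup\Gammai^0\cup\Omega$ to sidestep the corner points $A_0,A_1$ on $\Gammai$, reflect $w+(\Phi/\tau)\zeta'$ oddly in $x_1$; its integral over $\Omegat$ is zero, it lies in $\bfW^{m,2}(\Omegat)$ with norm controlled by $\|\bfg\|_{m+1/2,2;\,\Gammai}$, and it is supported away from $\gammao$. By the Bogovskii-type operator---\cite[Theorem III.3.3]{Ga} gives the $W^{1,2}_0\to$ $\bfL^2$ case but the same construction yields a bounded operator $\bfW^{k,2}\to\bfW^{k+1,2}$ on star-shaped domains, hence on a finite union of such covering $\Omegat\cap\{x_1<d-\delta/2\}$---there is $\bfz\in\bfW^{m+1,2}(\Omegat)$ (indeed in $\bfW^{m+1,2}_0$ of a slightly smaller domain, so $\tau$-periodicity in $x_2$ and vanishing near $\Gammaw\cup\Gammao$ can be arranged) with $\div\bfz = -(w+(\Phi/\tau)\zeta')$ and $\|\bfz\|_{m+1,2}\leq c\,\|\bfg\|_{m+1/2,2;\,\Gammai}$. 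Then $\bfgs:=\bfpsi_*+\bfq_*+\bfz$ is divergence-free, lies in $\bfW^{m+1,2}_{\rm per}(\Omega)$, equals $(\Phi/\tau)\bfe_1$ near $\Gammao$ (property c)), vanishes on $\Gammaw$ (property d)), restricts to $\bfg$ on $\Gammai$, and obeys (\ref{2.21}) by summing the three bounds. The main obstacle I anticipate is not any single estimate but the bookkeeping of \emph{compatibility at the corners} $A_0,A_1$: one must ensure the periodic $x_2$-extension and the reflected $x_1$-extension are simultaneously available, which is precisely why the construction is pushed to $\Omegat$ and why the cut-off functions are chosen $\tau$-periodic in $x_2$; and one must check that prescribing vanishing higher $x_1$-derivatives of $\bfpsi$ on $\gammai$ is consistent with the periodic trace theorem---this is the point where the modified \cite[Theorem II.4.4]{Ga} (as already used for Lemma \ref{L2.9}) does the real work. \hfill$\square$
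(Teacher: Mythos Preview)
Your approach---lift the trace, then correct the divergence by a Bogovskii-type operator---is a natural alternative to the paper's stream-function construction, but as written it has a real gap at the step where you claim $\bfgs|_{\Gammai}=\bfg$. The corrector $\bfz$ is built in (a sub-domain of) $\Omegat=\Omega_-\cup\Gammai^0\cup\Omega$, where $\Gammai^0$ lies in the \emph{interior}; membership in $\bfW^{m+1,2}_0$ of that sub-domain forces $\bfz$ to vanish only on its boundary, not on $\Gammai$. Hence $\bfgs|_{\Gammai}=\bfg+\bfz|_{\Gammai}$ need not equal $\bfg$. A related problem appears one line earlier: the odd-in-$x_1$ reflection of $w=\div\bfpsi_*$ belongs to $W^{m,2}(\Omegat)$ only if $w$ (and its even-order $x_1$-derivatives up to order $m-1$) vanish on $\Gammai$. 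With your choice $\partial_1^k\bfpsi|_{\Gammai}=0$ for $k\geq 1$ one gets $w|_{\Gammai}=(\partial_1\psi_1+\partial_2\psi_2)|_{\Gammai}=\partial_2 g_2$, which is in general nonzero, so already the reflected data fail to lie in $W^{1,2}(\Omegat)$ across $\Gammai$.

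Both issues are repairable, but not for free: you would have to prescribe the normal derivatives $\partial_1^k\bfpsi|_{\Gammai}$ (for $k=1,\dots,m$) so that $\div\bfpsi_*$ vanishes on $\Gammai$ to order $m-1$ (e.g.\ $\partial_1\psi_1|_{\Gammai}=-\partial_2 g_2$, and analogous higher conditions), and then run Bogovskii directly in a sub-domain of $\Omega$ (not $\Omegat$) with data in $W^{m,2}_0$, so that $\bfz\in\bfW^{m+1,2}_0(\Omega)$ genuinely vanishes on $\Gammai$. The paper sidesteps all of this by working with a scalar stream function: it sets $\psi(0,x_2)=\int^{x_2}g_1^0\,\rmd\vartheta$ on $\Gammai$, extends $\psi$ to $W^{m+2,2}_{\rm per}(\Omega)$ via Lemma~\ref{L2.9} with $\partial_1\psi=-g_2$ (and higher $\partial_1^k\psi=0$) on $\Gammai$, and takes $\bfg_*^0:=\nabla^{\perp}\psi$. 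This is divergence-free by construction and has the correct trace $(g_1^0,g_2)$ on $\Gammai$; the constant part $(\overline{g_1},0)$ and properties c), d) are then arranged as in \cite{FeNe3}. No divergence correction is needed, so no unwanted trace on $\Gammai$ can appear.
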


\noindent
{\it Principles of the proof} \ Let $g_1$, $g_2$ be the
components of $\bfg$. The function $g_1$ can be written in the
form $g_1=g_1^0+\overline{g_1}$, where
$\overline{g_1}:=\tau^{-1}\int_{\Gammai}g_1\; \rmd l$. Then
$\int_{\Gammai}g_1^0\; \rmd l=0$.

Put $\psi(0,x_2):= \int_{a_{02}}^{x_2} g_1^0(0,\vartheta)\,
\rmd\vartheta$ for $x_2\in(a_{02},a_{02}+\tau)$, where $a_{02}$
is the $x_2$--component of point $A_0$. (See Fig.~1.) Then
$\psi\in W^{m+3/2,2}_{\rm per}(\Gammai)$. We may apply Lemma
\ref{L2.9} (with $\Gammai$ instead of $\Gammao$ and $n=m$) to
extend $\psi$ from $\Gammai$ to $\Omega$ so that the extended
function $\psi_*$ is in $W^{m+2,2}_{\rm per}(\Omega)$ and
\begin{displaymath}
\partial_1\psi = -g_2 \quad \mbox{and} \quad
\partial_1^k \psi\ =\ 0 \quad (k=2,\dots,m+1)
\end{displaymath}
on $\gammai$ in the sense of traces and
\begin{align*}
\|\psi\|_{m+2,2}\ &\leq\ c\, \bigl( \|\psi\|_{m+3/2,2;\, \Gammai}+
\|\partial_1\psi\|_{m+1/2,2;\, \Gammai}+\ldots+
\|\partial_1^{m+1}\psi\|_{1/2,2;\, \Gammai} \bigr) \\
\noalign{\vskip 2pt}
&\leq\ c\, \bigl( \|\partial_2\psi\|_{m+1/2,2;\,
\Gammai}+\|\partial_1\psi\|_{m+1/2,2;\, \Gammai}\bigr) \\
\noalign{\vskip 2pt}
&=\ c\, \bigl( \|g_1^0\|_{m+1/2,2;\, \Gammai}+\|g_2\|_{m+1/2;\,
\Gammai} \bigr).
\end{align*}
Define $\bfg_*^0:=\nabla^{\perp}\psi$. Then $\bfg_*^0$ is a
divergence--free function in $\Omega$, belongs to
$\bfW^{m+1,2}_{\rm per}(\Omega)$, satisfies the inequality
$\|\bfg_*^0\|_{m+1,2}\leq c\, \|(g_1^0,g_2)\|_{m+1/2,2;\,
\Gammai}$ and its trace on $\Gammai$ equals $(g_1^0,g_2)$. Put
$\bfg_*:=\bfg_*^0+(\overline{g_1},0)^T$.

Further steps, which modify function $\bfg_*$ so that it also
has the properties c) and d) from Lemma \ref{L2.5}, can be made
in the same way as in \cite[Sec.~3]{FeNe3}.

\begin{theorem} \label{T2}
[on a strong solution of the Stokes problem
(\ref{1.2})--(\ref{1.8}), (\ref{1.11})] Let the curve $\Gammaw$
(which is the boundary of the profile) be of the class $C^2$,
$\bbF\in W^{1,2}_{\rm per}(\Omega)^{2\times 2}$ and $\bfg$,
$\bfg_*$ be the functions from Lemma \ref{L2.1}, corresponding
to $m=1$. Let the functionals $\bfF$ and $\bfG$ be defined by
formulas (\ref{2.1}) and (\ref{2.34}), respectively. Then

\begin{list}{}
{\setlength{\topsep 0pt}
\setlength{\itemsep 1pt}
\setlength{\leftmargin 19pt}
\setlength{\rightmargin 0pt}
\setlength{\labelwidth 14pt}}

\item[(a)]
the unique solution $\bfv$ of the equation
$\nu\cA\bfv=\bfF+\nu\br\bfG$ belongs to $\Vs\cap\bfW^{2,2}_{\rm
per}(\Omega)$ and the associated pressure $p$ is in
$W^{1,2}_{\rm per} (\Omega)$,

\item[(b)]
the functions $\bfu:=\bfg_*+\bfv$ and $p$ satisfy equations (\ref{1.2})
and (\ref{2.33}) a.e.~in $\Omega$,

\item[(c)]
$\bfu$, $p$ satisfy the boundary conditions (\ref{1.3}),
(\ref{1.4}) and
\begin{equation}
-\nu\, \frac{\partial\bfu}{\partial\bfn}+p\br\bfn\ =\
\bbF\cdot\bfn \label{2.18}
\end{equation}
in the sense of traces on $\Gammai$, $\Gammaw$ and $\Gammao$,
respectively,

\item[(d)]
there exists a constant $\cn01=\cc01(\nu,\Omega)$, such that
\begin{equation}
\|\bfu\|_{2,2}+\|\nabla p\|_2\ \leq\ \cc01\, \bigl(
\|\bbF\|_{1,2}+\|\bfg_*\|_{2,2} \bigr). \label{2.7}
\end{equation}
\end{list}
\end{theorem}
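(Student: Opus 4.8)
The plan is to establish the regularity of the weak solution $(\bfu,p)$ from Theorem \ref{T1} by a localization argument, treating separately three types of boundary pieces: the Dirichlet part $\Gammai\cup\Gammaw$, the ``do nothing'' part $\Gammao$, and the periodic part $\Gammam\cup\Gammap$. First I would take $\bbF=\cF(\bff,\bfh)$-type data as provided by Lemma \ref{L2.8} so that $\bbF\in W^{1,2}_{\rm per}(\Omega)^{2\times 2}$, and invoke Theorem \ref{T1} to get a weak solution $\bfv\in\Vs$, $\bfu=\bfg_*+\bfv$, and $p\in L^2(\Omega)$ solving \eqref{2.33} distributionally with \eqref{2.31} on $\Gammao$. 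The core task is then the $W^{2,2}$-estimate \eqref{2.7}; once interior and up-to-the-boundary $H^2$ regularity is known, the a.e.\ validity of \eqref{1.2}, \eqref{2.33} in (b) and the trace sense of \eqref{1.3}, \eqref{1.4}, \eqref{2.18} in (c) follow by standard density/trace arguments, and (a) is just a restatement of (b)--(d).

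Interior regularity is classical (Cattabriga/Agmon--Douglis--Nirenberg local estimates for the Stokes system), so I would concentrate on the boundary. Near $\Gammaw$ (and near the interior of $\Gammai$) the solution satisfies a Dirichlet condition ($\bfu=\bfzero$ on $\Gammaw$, $\bfu=\bfg$ on $\Gammai$), $\Gammaw$ is $C^2$ by hypothesis, and $\Gammai$ is a straight segment; here I would flatten the boundary and apply the known $W^{2,2}$ regularity theory for the steady Stokes problem with inhomogeneous Dirichlet data (as in \cite[Thm.~IV.6.1]{Ga}, \cite[Thm.~III.2.1.1]{So}), using that the data $\bbF$ and $\bfg_*$ are $W^{1,2}$ and $W^{2,2}$ respectively, and that $\bfg_*\in W^{2,2}_{\rm per}(\Omega)$ comes from Lemma \ref{L2.1} with $m=1$. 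Near $\Gammam$ and $\Gammap$ I would exploit the periodic extension: because $\bfg_*\in W^{2,2}_{\rm per}$, $\bbF\in W^{1,2}_{\rm per}$, and $\Gammap=\Gammam+\tau\bfe_2$, the solution extends $\tau$-periodically across $\Gammam\cup\Gammap$ into the neighbouring period of $\cO$, where $\Gammam$ and $\Gammap$ become interior curves; regularity there is thus reduced to the interior case, which conveniently sidesteps the corner points $A_0,A_1,B_0,B_1$ since a full neighbourhood in $\cO$ is available. Near the interior of $\Gammao$, which lies on the straight line $x_1=d$, I would apply the Nirenberg difference-quotient method in the tangential ($x_2$) direction: translating \eqref{2.33} and the boundary condition \eqref{2.31} by $h\bfe_2$, forming difference quotients $D_h^2\bfu$, testing the weak formulation against suitable divergence-free periodic test functions, and deriving a uniform bound on $\|\partial_2\nabla\bfu\|_2$ and $\|\partial_2 p\|_2$; the remaining normal derivative $\partial_1^2 u_i$ and $\partial_1 p$ are then recovered algebraically from the equation $-\nu\Delta\bfu+\nabla p=\div\bbF$ together with $\div\bfu=0$, exactly as in the standard interior argument.

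The main obstacle I expect is the matching of the three boundary regimes at the corners $A_0,A_1,B_0,B_1$, where a Dirichlet condition on $\Gammai$ or $\Gammaw$ meets the periodic condition on $\Gammam$ or $\Gammap$ (and on the $\Gammao$ side, the ``do nothing'' condition meets the periodic condition at $B_0,B_1$). The periodic-extension trick is precisely designed to remove the $\Gammam$/$\Gammap$ difficulty --- the corner points $A_0,A_1$ lie in the interior of the extended domain $\cO$, so no corner analysis is needed there; likewise $B_0,B_1$ become interior on the $\gammao$-line after periodic extension, provided the difference-quotient estimates are carried out in the periodic setting so that translations by $h\bfe_2$ stay legitimate. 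Thus the corners never get ``seen'' as corners: near $A_0,A_1$ we are doing interior (periodic) regularity, and near $B_0,B_1$ we are doing the $\Gammao$ tangential difference-quotient argument in the periodic framework. The one point requiring care is that the cut-off functions used in each localization must respect the $x_2$-periodicity (as already arranged in Lemmas \ref{L2.9} and \ref{L2.8}), so that all extensions and test functions remain in the periodic classes; I would choose the cut-offs so that the Dirichlet-type patches around $\Gammaw$ and the bulk of $\Gammai$ are genuinely interior away from the corners, and let the periodic-extension patches cover the ends. Finally, I would sum the local estimates with a partition of unity, absorb lower-order terms using the bound $\|\nabla\bfu\|_2\le c(\|\bbF\|_2+\|\bfg_*\|_{1,2})$ from Theorem \ref{T1} and the pressure estimate \eqref{2.29}, and arrive at \eqref{2.7} with $\cc01=\cc01(\nu,\Omega)$.
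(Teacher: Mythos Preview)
Your localization strategy---classical Dirichlet regularity near $\Gammaw$ and the interior of $\Gammai$, periodic extension across $\Gammam\cup\Gammap$ so that the corners $A_0,A_1,B_0,B_1$ become interior, and tangential difference quotients near the flat piece $\Gammao$---is exactly the geometric scheme the paper uses in Lemmas~\ref{L2.2}, \ref{L2.6}, \ref{L2.7} and Corollary~\ref{C2.1}. In that respect your plan is correct and matches the paper.

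Where you diverge is in the logical architecture. You propose a direct bootstrap: start from the weak solution of Theorem~\ref{T1} and upgrade it to $W^{2,2}$ by the localization argument, so that (a) is obtained as a regularity statement and (b)--(d) follow. The paper instead proves the estimate \eqref{2.7} only as an \emph{a~priori} bound, i.e.\ Lemmas~\ref{L2.2}--\ref{L2.7} are stated and proved under the hypothesis that $\bfv\in\bfW^{2,2}$ and $p\in W^{1,2}$ already hold; in particular the verification that the cut-off pair $(\bfvt,\pt)$ near $\Gammao$ satisfies $\nu\cA\bfvt=\bfFt$ uses the trace form \eqref{2.18} of the outflow condition, not just the weak one \eqref{2.31}. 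Existence in $W^{2,2}$ (statement (a)) is then obtained by a separate operator-theoretic step: the paper defines a Stokes-type operator $A=\cA|_{D(A)}$ on a carefully chosen domain $D(A)\subset\Vs\cap\bfW^{2,2}_{\rm per}(\Omega)$, shows $A$ is symmetric, positive and---using the a~priori estimate \eqref{2.7}---closed, hence self-adjoint with $R(A)=\Ls$; it then decomposes $\bfF=\bfF_1+\bfF_2$ according to whether $\bbF\cdot\bfn$ on $\Gammao$ is parallel or perpendicular to $\bfn$, reduces $\bfF_1$ and $\nu\bfG$ to the case $\bff\in\Ls$, and handles $\bfF_2$ by an explicit stream-function construction.

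Your direct route is viable and avoids the $\bfF_1/\bfF_2$ splitting and the self-adjointness argument, but you should be aware that carrying out the difference-quotient step near $\Gammao$ starting only from $\bfv\in\Vs$, $p\in L^2$ is genuinely different from the paper's Lemma~\ref{L2.6}: after localizing with a cut-off $\eta$ you must derive the weak identity $\nu\langle\cA\bfvt,\bfw\rangle_\sigma=\langle\bfFt,\bfw\rangle_\sigma$ (and the uniform $\Vsd$-bound on its tangential difference quotients) \emph{without} integrating by parts against $\Delta\bfv$ or invoking the trace equality \eqref{2.18}. This can be done by working with the divergence-form identity $\div(-\nu\nabla\bfu+p\bbI-\bbF)=\bfzero$ together with \eqref{2.31}, and using $\|D_{-h}^2\bfw\|_2\le\|\partial_2\bfw\|_2$ to control the pressure and lower-order commutator terms, but it is an extra computation that your outline does not flag.
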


Note that the existence and uniqueness of the solution
$\bfv\in\Vs$ of the equation $\nu\cA\bfv=\bfF+\nu\br\bfG$
follows from Lemma \ref{L2.5} or Theorem \ref{T1}.

If $\bff\in\bfL^2(\Omega)$ and
$\bfh\in\bfW^{1/2,2}_{per}(\Omega)$ are given and $\bbF$ is the
tensor function, provided by Lemma \ref{L2.2}, then functions
$\bfu$ and $p$ from Theorem \ref{T2} represent a strong solution to
the Stokes problem (\ref{1.11}), (\ref{1.2})--(\ref{1.8}),
where $\bff=\div\bbF$ in equation (\ref{1.11}).

The conclusions $\bfu\in\bfW^{2,2}(\Omega)$ and $p\in
W^{1,2}(\Omega)$ of Theorem \ref{T2}, together with inequality
(\ref{2.7}), represent the maximum regularity property of the
studied problem.

\vspace{10pt} \noindent
{\it Proof of Theorem \ref{T2}} \ Put
$\bfh\equiv(h_1,h_2)^T:=\bbF\cdot\bfn$ on $\Gammao$. The tensor
function $\bbF$ can be written in the form
$\bbF=\bbF_1+\bbF_2$, where both $\bbF_1$ and $\bbF_2$ are in
$W^{1,2}_{\rm per}(\Omega)^{2\times 2}$,
$\bbF_1\cdot\bfn=h_1\br\bfe_1$ on $\Gammao$ and
$\bbF\cdot\bfn=h_2\br\bfe_2$ on $\Gammao$. Denote by $\bfF_1$
and $\bfF_2$ the functionals in $\Vsd$, related to $\bbF_1$ and
$\bbF_2$, respectively, through formula (\ref{2.1}).

We split the proof of Theorem \ref{T2} to seven parts, where we
successively prove

\begin{list}{}
{\setlength{\topsep 1pt}
\setlength{\itemsep 2pt}
\setlength{\leftmargin 21pt}
\setlength{\rightmargin 0pt}
\setlength{\labelwidth 12pt}}

\item[1) ]
the implication (a) $\Longrightarrow$ (b), (c),

\item[2) ]
the implication (a), (b), (c) $\Longrightarrow$ (d),

\item[3) ]
the solvability of the equation $\nu\cA\bfv=\bff$ in
$\Vs\cap\bfW^{2,2}(\Omega)$ for $\bff\in\Ls$,

\item[4) ]
the solvability of the equation $\nu\cA\bfv_1=\bfF_1$ in
$\Vs\cap\bfW^{2,2}(\Omega)$ and the inclusion \\ $p_1\in
W^{1,2}(\Omega)$ for an associated pressure,

\item[5) ]
the solvability of the equation $\nu\cA\bfv_2=\bfF_2$ in
$\Vs\cap\bfW^{2,2}(\Omega)$ and the inclusion \\ $p_2\in
W^{1,2}(\Omega)$ for an associated pressure,

\item[6) ]
the solvability of the equation $\nu\cA\bfv_3=\nu\bfG$ in
$\Vs\cap\bfW^{2,2}(\Omega)$ and the inclusion \\ $p_3\in
W^{1,2}(\Omega)$ for an associated pressure,

\item[7) ]
the validity of statement (a).

\end{list}

\noindent
Note that this scheme does not create a logical circle, because we do not use the validity of statement (a) of Theorem \ref{T2} in parts 2) -- 6). We use the validity of the implication (a) $\Longrightarrow$ (b), (c), which is not the same as the validity of (a).

The most technical part of the proof is the derivation of inequality (\ref{2.7}) in part 2). This inequality is further used in part 3) in order to show that a certain Stokes--type operator $A$ is closed and its range is closed in $\Ls$.

\vspace{-4pt}
\paragraph{1) \ The implication (a) $\Longrightarrow$ (b), (c).}
As equation (\ref{2.33}) is satisfied in the sense of
distributions in $\Omega$ (due to Theorem \ref{T1}) and all terms in
this equation are now in $\bfL^2(\Omega)$, the equation is
satisfied a.e.~in $\Omega$. Clearly, $\bfu$ also satisfies
equation (\ref{1.2}) a.e.~in $\Omega$ and boundary conditions
(\ref{1.3}) and (\ref{1.4}) in the sense of traces on $\Gammai$
and $\Gammaw$, respectively. Since $\bfu$ and $p$ satisfy the
boundary condition (\ref{2.31}) in the sense of equality in
$\bfW^{-1/2,2}(\Gammao)$ (see Theorem \ref{T1}) and all the functions
$\nabla\bfu$, $p\br\bbI$ and $\bbF$ have traces on $\Gammao$ in
$W^{1/2,2}(\Gammao)^{2\times 2}$, the boundary condition
(\ref{2.31}) holds on $\Gammao$ in the sense of traces, too. It
can now be written in the form (\ref{2.18}).

\vspace{-6pt}
\paragraph{2) \ The implication (a), (b), (c) $\Longrightarrow$
(d).} \ We split the proof of the implication to three lemmas,
where we successively derive an inequality, analogous to
(\ref{2.7}), in the interior of $\Omega$ plus the neighborhood
of $\Gammaw$ and $\Gammai^0$ (Lemma \ref{L2.2}), in the
neighborhood of $\Gammao^0$ (Lemma \ref{L2.6}) and in the
neighborhoods of $\Gammam$ and $\Gammap$ (Lemma \ref{L2.7}).

\begin{lemma} \label{L2.2}
Let $\Omega'$ be sub-domain of $\Omega$, such that
$\overline{\Omega'}\subset\Omega\cup\Gammai^0\cup\Gammaw$. Then
\begin{equation}
\|\bfv\|_{2,2;\, \Omega'}+\|\nabla p\|_{2;\, \Omega'}\ \leq\ c\,
\bigl( \|\div\bbF\|_{2}+\|\bfg_*\|_{2,2}+\|\bfv\|_{1,2} \bigr),
\label{2.17}
\end{equation}
where $c=c(\nu,\Omega,\Omega')$.
\end{lemma}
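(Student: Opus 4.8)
The plan is to prove Lemma~\ref{L2.2} as a standard \emph{interior plus Dirichlet-boundary} regularity statement for the Stokes system, using a localization argument and then invoking the known regularity theory for the Stokes problem with the homogeneous Dirichlet boundary condition in a smooth domain (e.g.~\cite[Theorem IV.6.1]{Ga} or \cite[Theorem I.2.2]{Te}). The key point is that on the portion of $\partial\Omega$ that $\Omega'$ is allowed to touch, namely $\Gammai^0\cup\Gammaw$, the solution $\bfu=\bfg_*+\bfv$ satisfies Dirichlet conditions: $\bfu=\bfg$ on $\Gammai$ (with $\bfg\in\bfW^{3/2,2}$ since $m=1$) and $\bfu=\bfzero$ on $\Gammaw$ (which is $C^2$). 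Since $\overline{\Omega'}$ stays away from the corner points $A_0,A_1,B_0,B_1$ and from the artificial boundaries $\Gammam,\Gammap,\Gammao$, there is no interaction between different boundary-condition types in the region under consideration, so the classical smooth-domain theory applies locally.

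Concretely, I would first fix a slightly larger subdomain $\Omega''$ with $\overline{\Omega'}\subset\Omega''$ and $\overline{\Omega''}\subset\Omega\cup\Gammai^0\cup\Gammaw$, and a cut-off function $\xi\in C^\infty$ with $\xi\equiv1$ on $\Omega'$ and $\supp\xi\subset\Omega''$, vanishing near the corners and near $\Gammam\cup\Gammap\cup\Gammao$. Writing $\bfw:=\xi(\bfu-\bfg_*)=\xi\bfv$ and $q:=\xi p$, a direct computation shows that $(\bfw,q)$ solves a Stokes system $-\nu\Delta\bfw+\nabla q=\widetilde{\bff}$, $\div\bfw=g$ in $\Omega''$, where the new right-hand side $\widetilde{\bff}$ collects $\xi\,\div\bbF$ together with commutator terms involving $\nabla\xi$ and $\nabla^2\xi$ acting on $\bfv$ and $p$ — hence $\widetilde{\bff}\in\bfL^2$ with $\|\widetilde{\bff}\|_2\le c(\|\div\bbF\|_2+\|\bfv\|_{1,2}+\|p\|_2)$ — and $g=\nabla\xi\cdot\bfv\in W^{1,2}$ with an analogous bound; on the boundary portion $\partial\Omega''\cap(\Gammai^0\cup\Gammaw)$ the function $\bfw$ equals $\xi(\bfu-\bfg_*)$, which is $\bfzero$ on $\Gammaw$ and equals $\xi(\bfg-\bfg_*|_{\Gammai})$... — here one should instead work with $\bfu$ directly to keep the inhomogeneous Dirichlet datum, or simply note $\bfv=\bfu-\bfg_*$ has trace $-\bfg_*|_{\Gammai}+\bfg$ which is in $\bfW^{3/2,2}$ with norm controlled by $\|\bfg_*\|_{2,2}$. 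Either way the local data are controlled, and on the remaining part of $\partial\Omega''$ everything vanishes because $\xi$ does. Then $\|p\|_2$ on the right is absorbed via the global estimate \eqref{2.29} of Theorem~\ref{T1}, which bounds $\|p\|_2$ by $c(\|\nabla\bfu\|_2+\|\bbF\|_2)\le c(\|\bfv\|_{1,2}+\|\bfg_*\|_{1,2}+\|\bbF\|_2)$, and $\|\bbF\|_2\le\|\div\bbF\|_2$ is not literally true, but one uses Lemma~\ref{L2.3}/\ref{L2.8} or simply keeps $\bbF$ through $\|\div\bbF\|_2+\|\bbF\|_2$ and notes $\bbF$ may be chosen with $\|\bbF\|_2\le c\|\div\bbF\|_{\bfW^{-1,2}_0}\le c\|\div\bbF\|_2$.

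Applying the smooth-domain Stokes regularity theorem to $(\bfw,q)$ on $\Omega''$ (which, near the touched boundary, can be taken with $C^2$ boundary: $\Gammaw$ is $C^2$ by hypothesis and $\Gammai^0$ is a line segment, and away from the boundary it is the interior estimate) yields $\bfw\in\bfW^{2,2}$, $q\in W^{1,2}$ with $\|\bfw\|_{2,2;\Omega''}+\|\nabla q\|_{2;\Omega''}\le c(\|\widetilde{\bff}\|_2+\|g\|_{1,2}+\text{Dirichlet data})$. Since $\xi\equiv1$ on $\Omega'$, this restricts to the desired bound $\|\bfv\|_{2,2;\Omega'}+\|\nabla p\|_{2;\Omega'}\le c(\|\div\bbF\|_2+\|\bfg_*\|_{2,2}+\|\bfv\|_{1,2})$, which is exactly \eqref{2.17}.

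The main obstacle I anticipate is the bookkeeping at the boundary: one must make sure the cut-off region $\Omega''$ can genuinely be chosen so that its boundary consists \emph{only} of (a part of) the $C^2$ curve $\Gammaw$, (a part of) the line segment $\Gammai^0$, and a smooth interior patching surface — crucially excluding the corner points $A_0,A_1,B_0,B_1$ where $\Gammai$ meets $\Gammam$ or $\Gammaw$ meets nothing but where $\Gammai^0$ ends. This is possible precisely because the hypothesis $\overline{\Omega'}\subset\Omega\cup\Gammai^0\cup\Gammaw$ uses the \emph{open} segment $\Gammai^0$ and excludes $\Gammam,\Gammap$. A secondary technical nuisance is that $\Gammai$ and $\Gammaw$ carry \emph{two different} (though both Dirichlet-type) data, so one may prefer to treat two overlapping cut-offs — one localizing near $\Gammaw$ with homogeneous Dirichlet data, one near $\Gammai^0$ with inhomogeneous data $\bfg$ — and a third purely interior one, then combine. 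Everything else is routine once the standard Stokes-with-Dirichlet regularity result is granted.
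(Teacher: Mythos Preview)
Your approach is essentially the same as the paper's: localize with a cut-off, obtain a Stokes problem on a $C^2$ subdomain $\Omega''$ with Dirichlet boundary conditions, and invoke the classical regularity result (the paper cites \cite[Proposition I.2.3]{Te}).

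You have made the bookkeeping harder than necessary, however. The key point you are circling around but not quite using is that $\bfv\in\Vs$, and by definition of $\Vs$ the trace of $\bfv$ on $\Gammai\cup\Gammaw$ is \emph{zero}. (Recall $\bfg_*$ is an extension of $\bfg$, so $\bfu-\bfg_*=\bfv$ vanishes on $\Gammai$ as well as on $\Gammaw$.) Hence $\bfvt:=\eta\bfv$ satisfies the \emph{homogeneous} Dirichlet condition on all of $\partial\Omega''$, and there is no need to track inhomogeneous Dirichlet data, to split into overlapping cut-offs, or to worry about ``two different Dirichlet-type data''. The price is that the equation for $\bfv$ picks up the extra source term $\nu\Delta\bfg_*$ (since $-\nu\Delta\bfv+\nabla p=\div\bbF+\nu\Delta\bfg_*$), and this is exactly how $\|\bfg_*\|_{2,2}$ enters the right-hand side of \eqref{2.17}. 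The paper writes the localized right-hand side as
\[
\bfft\ =\ \eta\,\div\bbF-2\nu\,\nabla\eta\cdot\nabla\bfv-\nu(\Delta\eta)\bfv-(\nabla\eta)\,p+\nu\eta\,\Delta\bfg_*,
\qquad \hht\ =\ \nabla\eta\cdot\bfv,
\]
and then bounds $\|p\|_2$ via \eqref{2.29}, exactly as you suggest. With this simplification your ``main obstacle'' and ``secondary technical nuisance'' both disappear.
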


\begin{proof}
Consider a $C^2$ sub-domain $\Omega''$ of $\Omega$, such that
$\Omega'\subset\Omega''$,
$\overline{\Omega''}\subset\Omega\cup\Gammai^0\cup\Gammaw$ and
the distance between $\partial\Omega''\cap\Omega$ and
$\partial\Omega'\cap\Omega$ is positive. Let $\eta$ be an
infinitely differentiable cut--off function in $\Omega$ such
that $\supp\eta\subset\overline{\Omega''}$ and $\eta=1$ in
$\Omega'$. Put $\bfvt:=\eta\bfv$ and $\pt:=\eta\br p$. Since
$\bfv$, $p$ satisfy (\ref{2.33}) a.e.~in $\Omega$, the
functions $\bfvt$, $\pt$ represent a strong solution of the
problem
\begin{align}
-\nu\Delta\bfvt+\nabla\pt\ &=\ \bfft && \mbox{in}\ \Omega'',
\label{2.12} \\
\div\bfvt\ &=\ \hht && \mbox{in}\ \Omega'', \label{2.13} \\
\bfvt\ &=\ \bfzero && \mbox{on}\ \partial\Omega'', \label{2.14}
\end{align}
where
\begin{displaymath}
\bfft:=\eta\, \div\bbF-2\nu\br\nabla\eta\cdot\nabla\bfv-\nu\,
(\Delta\eta)\, \bfv-(\nabla\eta)\, p+\nu\br\eta\br\Delta\bfg_*
\qquad \mbox{and} \qquad \hht:=\nabla\eta\cdot\bfv.
\end{displaymath}
As $\div\bbF\in\bfL^2(\Omega)$, $\bfv\in\Vs$ and $p\in
L^2(\Omega)$ (satisfying (\ref{2.29})), we have
$\bfft\in\bfL^2(\Omega)$ and $\hht\in W^{1,2}(\Omega)$.
Moreover,
\begin{alignat}{3}
& \|\bfft\|_2\ &&\leq\ c\, \bigl( \|\div\bbF\|_2+
\|\bfg_*\|_{2,2}+\|\bfv\|_{1,2} \bigr), \label{2.15} \\
& \|\hht\|_{1,2}\ &&\leq\ c\, \|\bfv\|_2\ \leq\ c\,
\|\bfv\|_{1,2}, \label{2.16}
\end{alignat}
where $c=c(\nu,\eta)$. Due to \cite[Proposition I.2.3]{Te},
\begin{displaymath}
\|\bfvt\|_{2,2;\, \Omega''}+\|\nabla\pt\|_{2;\, \Omega''}\
\leq\ c\, \bigl( \|\bfft\|_{2;\, \Omega''}+\|\hht\|_{1,2;\,
\Omega''} \bigr),
\end{displaymath}
where $c=c(\nu,\Omega'')$. This inequality, together with
(\ref{2.15}) and (\ref{2.16}), implies that $\bfv$ and $p$
satisfy (\ref{2.17}).
\end{proof}

\begin{lemma} \label{L2.6}
Let $\Gammao'$ be a (closed) line segment on $\Gammao^0$. For
$\rho>0$, denote $\Omega':=\{ \bfx\in\R^2_{d-};\
\dist(\bfx;\Gammao')<\rho\}$. Assume that $\rho$ is so small
that $\overline{\Omega'}$ is disjoint with $\Gammam$, $\Gammap$
and $\Gammaw$. Then $\bfv$ and $p$ satisfy estimate
(\ref{2.17}), where the constant $c$ on the right hand side
again depends only on $\nu$, $\Omega$ and $\Omega'$.
\end{lemma}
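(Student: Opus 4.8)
\noindent\emph{Plan of the proof.} \ The idea is to reproduce the argument of Lemma \ref{L2.2} near the flat boundary piece $\Gammao$, replacing the interior/Dirichlet regularity used there by the Nirenberg difference-quotient technique in the direction tangential to $\Gammao$. Since $\Gammao'$ is a closed sub-segment of the open segment $\Gammao^0$, it is bounded away from the corner points $B_0$, $B_1$, and by the choice of $\rho$ the set $\overline{\Omega'}$ meets $\partial\Omega$ only along $\Gammao'$, where $\partial\Omega$ is the straight line $x_1=d$. First I would cover $\overline{\Omega'}$ by a compact set $K\subset\Omega$ -- on which (\ref{2.17}) follows from Lemma \ref{L2.2} -- together with finitely many ``flat rectangles'' $Q=(d-\delta_0,d)\times(a,b)$ with $Q\subset\Omega$ and $\overline{Q}\cap\partial\Omega\subset\Gammao^0$; it then suffices to prove (\ref{2.17}) on one such $Q$. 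On $Q$ I localize by $\eta(x_1,x_2)=\theta_1(x_1)\,\theta_2(x_2)$, where $\theta_1\equiv1$ for $x_1$ near $d$ and $\supp\theta_1\subset(d-\delta_0,d]$, and $\theta_2$ is a bump in $(a,b)$; thus $\partial_1\eta\equiv0$ near $\Gammao$, and -- for $\delta_0$ small -- $\bfg_*$ equals the constant vector $(\Phi/\tau)\,\bfe_1$ on $Q$. Putting $\bfvt:=\eta\bfv$, $\pt:=\eta\,p$ (which have compact support in $x_2$), and using (\ref{2.33}), $\div\bfv=\bfzero$ and (\ref{2.18}), one checks that $(\bfvt,\pt)$ is a weak solution in $Q$ of
\begin{align*}
& -\nu\Delta\bfvt+\nabla\pt\ =\ \bfft \quad\mbox{and}\quad \div\bfvt\ =\ \hht \quad\mbox{in}\ Q, \\
& \bfvt\ =\ \bfzero \ \ \mbox{on}\ \partial Q\setminus\Gammao, \qquad -\nu\,\frac{\partial\bfvt}{\partial\bfn}+\pt\,\bfn\ =\ \bfht \ \ \mbox{on}\ \Gammao,
\end{align*}
where $\bfft:=\eta\,\div\bbF-2\nu\,\nabla\eta\cdot\nabla\bfv-\nu(\Delta\eta)\bfv+(\nabla\eta)\,p\in\bfL^2(Q)$, $\hht:=\nabla\eta\cdot\bfv\in W^{1,2}(Q)$, and $\bfht:=\eta\,\bbF\cdot\bfn\in\bfW^{1/2,2}(\Gammao)$, the last relation using $\partial_1\eta=0$ and $\partial_1\bfg_*=\bfzero$ on $\Gammao$. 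By (\ref{2.29}) and the trace theorem, $\|\bfft\|_2+\|\hht\|_{1,2}+\|\bfht\|_{1/2,2;\,\Gammao}\leq c\,(\|\bbF\|_{1,2}+\|\bfg_*\|_{2,2}+\|\bfv\|_{1,2})$, i.e.\ (up to writing $\|\bbF\|_{1,2}$, which near $\Gammao$ is the relevant norm, for the trace term) the right-hand side of (\ref{2.17}).

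The core of the argument is the difference-quotient estimate in the tangential direction $\bfe_2$, legitimate because $Q$ and the boundary condition are translation invariant in $x_2$ and $\bfvt$, $\pt$ have compact $x_2$-support. With $D_h\bfw(\bfx):=h^{-1}\bigl(\bfw(\bfx+h\bfe_2)-\bfw(\bfx)\bigr)$, I would test the weak formulation with the admissible function $\bfphi:=-D_{-h}D_h\bfvt$, which after integrating by parts in the difference quotients leads to an identity of the form
\begin{displaymath}
\nu\,\|D_h\nabla\bfvt\|_2^2\ =\ \int_Q(D_h\pt)(D_h\hht)\,\rmd\bfx-\int_Q\bfft\cdot D_{-h}D_h\bfvt\,\rmd\bfx+\int_{\Gammao}\bfht\cdot D_{-h}D_h\bfvt\,\rmd l .
\end{displaymath}
The $\bfft$-term is bounded by $\|\bfft\|_2\,\|D_h\nabla\bfvt\|_2$; the $\Gammao$-term, after one integration by parts along $\Gammao$, by $c\,\|\bfht\|_{1/2,2;\,\Gammao}\,\bigl(\|\bfv\|_{1,2}+\|D_h\nabla\bfvt\|_2\bigr)$, using the trace theorem and the uniform bound $\|D_h\bfht\|_{-1/2,2;\,\Gammao}\leq c\,\|\bfht\|_{1/2,2;\,\Gammao}$ (the tangential difference quotient maps $W^{1/2,2}$ into $W^{-1/2,2}$ boundedly, uniformly in $h$); and the pressure term by $c\,\|\hht\|_{1,2}\,\bigl(\|\bfft\|_2+\|D_h\nabla\bfvt\|_2\bigr)$, since $\nabla\pt=\bfft+\nu\Delta\bfvt$ and the Ne\v{c}as inequality (cf.\ \cite[Proposition I.1.2]{Te}) applied to $D_h\pt$, which has zero mean over $Q$, give $\|D_h\pt\|_2\leq c\,\bigl(\|\bfft\|_2+\|D_h\nabla\bfvt\|_2\bigr)$. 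Absorbing the $\|D_h\nabla\bfvt\|_2$-contributions by Young's inequality yields $\|D_h\nabla\bfvt\|_2\leq c\,(\|\bbF\|_{1,2}+\|\bfg_*\|_{2,2}+\|\bfv\|_{1,2})$ uniformly in $h$, hence $\partial_2\nabla\bfvt\in\bfL^2(Q)$ with the same bound, and then, letting $h\to0$ in the estimate for $D_h\pt$, $\partial_2\pt\in L^2(Q)$.

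The remaining normal derivatives follow by a short bootstrap using the divergence constraint and the two scalar momentum equations: from $\div\bfvt=\hht$ one gets $\partial_1\vt_1=\hht-\partial_2\vt_2\in W^{1,2}(Q)$, so $\vt_1\in\bfW^{2,2}(Q)$; the first momentum equation then gives $\partial_1\pt=\bfft_1+\nu\Delta\vt_1\in L^2(Q)$, so $\pt\in W^{1,2}(Q)$; finally the second gives $\partial_1^2\vt_2=-\partial_2^2\vt_2+\nu^{-1}(\partial_2\pt-\bfft_2)\in L^2(Q)$, so $\vt_2\in\bfW^{2,2}(Q)$ -- each step preserving the bound by $c\,(\|\bbF\|_{1,2}+\|\bfg_*\|_{2,2}+\|\bfv\|_{1,2})$. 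Since $\bfvt=\bfv$ and $\pt=p$ where $\eta=1$, summing over the finitely many $Q$'s and adding the contribution of $K$ from Lemma \ref{L2.2} gives (\ref{2.17}) on $\Omega'$. I expect the difference-quotient step to be the main obstacle: unlike for the Dirichlet problem one cannot restrict to divergence-free test functions, so both the traction term on $\Gammao$ and the term $\int_Q(D_h\pt)(D_h\hht)$ genuinely appear and must be controlled \emph{uniformly in $h$} by the very norms occurring on the left, which is exactly what forces the coupled Ne\v{c}as bound on $D_h\pt$ together with the $W^{1/2,2}\!\to\!W^{-1/2,2}$ mapping property along $\Gammao$.
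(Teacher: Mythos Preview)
Your argument is correct, but it proceeds along a genuinely different path from the paper's. The paper first makes the localized velocity \emph{divergence--free}: it extends $\bfv$ divergence--free across $\Gammao$ to $\bfv''$ (via \cite{KMPT}), then sets $\bfvt:=\eta\bfv''-\bfv_*$ with a Bogovskii correction $\bfv_*\in\bfW^{2,2}_0$ solving $\div\bfv_*=\nabla\eta\cdot\bfv''$. This puts the localized problem back into the $\Vs$/$\cA$ framework, and the data $(\bfft,\bfht)$ are packed into a single tensor $\bbFt=\cF(\bfft,\bfht)$ by Lemma~\ref{L2.8}. The tangential estimate then follows in one line from Lemma~\ref{L2.4} applied to $\nu\cA(D_2^{\delta}\bfvt)=\bfFt_{\delta}$, and the pressure difference quotient is estimated by reusing Theorem~\ref{T1}. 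By contrast, you keep $\hht=\nabla\eta\cdot\bfv$ and face the pressure--divergence coupling head--on, controlling $\|D_h\pt\|_2$ by Ne\v{c}as' inequality together with $\nabla\pt=\bfft+\nu\Delta\bfvt$, and controlling the traction term by the uniform mapping property $D_h:W^{1/2,2}(\Gammao)\to W^{-1/2,2}(\Gammao)$. Both routes end with the same bootstrap for the normal derivatives. The paper's approach buys modularity---it recycles Lemma~\ref{L2.4}, Theorem~\ref{T1} and Lemma~\ref{L2.8}---at the price of two auxiliary constructions ($\bfv''$ and $\bfv_*$); your approach is closer to the classical Nirenberg method and avoids those constructions, but you must handle the three data pieces $\bfft,\hht,\bfht$ and their interactions separately. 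A minor point you already note: near $\Gammao$ the trace $\bfht=\eta\,\bbF\cdot\bfn$ forces $\|\bbF\|_{1,2}$ rather than $\|\div\bbF\|_2$ on the right--hand side; the paper's own proof of Lemma~\ref{L2.6} has the same feature, and this is exactly what is needed for (\ref{2.7}).
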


\noindent
\begin{minipage}{70mm}
{\it Proof} \
The condition of smallness of $\rho$ guarantees that
$\overline{\Omega'}\subset\Omega\cup\Gammao^0$.

\hspace{10pt}
Denote $U':=\{\bfx=$ $(x_1,x_2)\in\R^2;\
\dist(\bfx$, $\Gammao')<\rho\}$. ($U'$ is the $\rho$--neighborhood
of $\Gammao'$ in $\R^2$.) Let $\delta>0$. By analogy with $U'$,
denote by $U''$ the $(\rho+\delta)$--neighborhood of $\Gammao'$
in $\R^2$. Denote further by $U''_{d-}$ the intersection of
$U''$ with the half-plane $\R^2_{d-}$ and by $U''_{d+}$ the
intersection of $U''$ with the half-plane
$\R^2_{d+}:=\{\bfx=(x_1,x_2)\in\R^2;\ x_1>d\}$. Assume that
$\delta$ is so small that the distances between $U''$ and
points $B_0$, $B_1$ are positive and $U''_{d-}\subset\Omega$.
(See Fig.~2.)

\hspace{10pt}
Let $\eta$ be a $C^{\infty}$--function in $\R^2$, supported in
$\overline{U''}$, such that $\eta=1$ in $U'$ and $\eta$ is
symmetric with respect to the line $x_1=d$. (The last condition
means that $\eta(d+\vartheta,x_2)=\eta(d-\vartheta,x_2)$ for
all $\vartheta,\, x_2\in\R$.)
\end{minipage}
\begin{minipage}{60mm}
  \setlength{\unitlength}{0.35mm}
  \begin{picture}(162,155)
  \put(18,90){\vector(1,0){123}}
  \put(128,94){\small $x_1$}
  %
  \put(77.3,50){\line(0,1){90}}
  \qbezier(77,50)(47,64)(30,66) \qbezier(77,140)(47,154)(30,157)
  \dashline[+30]{2.2}(77.3,32)(77.3,164)
  \thicklines
  \put(77.3,75){\line(0,1){35}}
  \thinlines
  \qbezier[16](78,100)(89,102)(100,104) \put(102,102){\small $\Gammao'$}
  \qbezier(76,75)(77.3,75)(78.2,75)
  \qbezier(76,110)(77.3,110)(78.2,110)
  %
  \qbezier(77.3,124)(83.096,124)(87.198,119.898) 
  \qbezier(87.198,119.898)(91.3,115.796)(91.3,110) 
  \qbezier(91.3,75)(91.3,69.204)(87.198,65.102) 
  \qbezier(87.198,65.102)(83.096,61)(77.3,61) 
  \qbezier(77.3,124)(71.504,124)(67.402,119.898) 
  \qbezier(67.402,119.898)(63.3,115.796)(63.3,110) 
  \qbezier(63.3,75)(63.3,69.204)(67.402,65.102) 
  \qbezier(67.402,65.102)(71.504,61)(77.3,61) 
  \put(63.4,110){\line(0,-1){35}} \put(91.4,110){\line(0,-1){35}}
  \qbezier(77.3,118)(80.612,118)(82.956,115.656)
  \qbezier(82.956,115.656)(85.3,113.312)(85.3,110)
  \qbezier(85.3,75)(85.3,71.688)(82.956,69.344)
  \qbezier(82.956,69.344)(80.612,67)(77.3,67)
  \qbezier(77.3,118)(73.988,118)(71.644,115.656)
  \qbezier(71.644,115.656)(69.3,113.312)(69.3,110)
  \qbezier(69.3,75)(69.3,71.688)(71.644,69.344)
  \qbezier(71.644,69.344)(73.988,68)(77.3,67)
  \put(69.4,110){\line(0,-1){35}} \put(85.4,110){\line(0,-1){35}}
  \qbezier[11](85.4,80)(93.1,80)(100,80)
  \put(102,78){\small $U'$}
  \put(60,131){\small $\Gammao$}
  \put(40,143){\small $\Gammap$} \put(40,53){\small $\Gammam$}
  \put(44,111){\small $\Omega$}
  \put(80,46){\small $B_0$} \put(80,137){\small $B_1$}
  \put(81,157){\small $\gammao$ ($x_1=d$)}
  \put(92,62){\small $U''$}
  \put(18,12){\small Fig.~2: \ The line segment $\Gammao'$ and its}
  \put(43.5,1){\small neighborhoods $U'$ and $U''$}
  \end{picture}
\end{minipage}

Applying the results from \cite{KMPT}, one can deduce that
there exists a divergence--free extension $\bfv''$ of function
$\bfv$ from $U''_{d-}$ to the whole set $U''$, such that
$\bfv''\in\bfW^{1,2}(U'')$ and $\| \bfv'' \|_{1,2;\, U''}\leq
c\, \|\bfv\|_{1,2;\, U''_{d-}}$, where $c$ is independent of
$\bfv$.

Since $\nabla\eta\cdot\bfv''\in W^{1,2}_0(U'')$ and
$\int_{U''}\nabla\eta\cdot\bfv''\; \rmd\bfx=0$, there exists
(by \cite[Theorem III.3.3]{Ga}) $\bfv_*\in\bfW^{2,2}_0(U'')$,
such that $\div\bfv_*=\nabla\eta\cdot\bfv''$ in $U''$ and
\begin{displaymath}
\|\bfv_*\|_{2,2;\, U''}\ \leq\ c\,
\|\nabla\eta\cdot\bfv''\|_{1,2;\, U''}\ \leq\ c\,
\|\bfv''\|_{1,2;\, U''}\ \leq\ c\, \|\bfv\|_{1,2;\, U''_{d-}},
\end{displaymath}
where $c$ is independent of $\bfv$. Extending $\bfv_*$ by zero
to $\Omega\smallsetminus U''$, we have $\|\bfv_*\|_{2,2}\leq
c\, \|\bfv\|_{1,2}$. Put
\begin{equation}
\bfvt\ :=\ \eta\bfv''-\bfv_*, \qquad \pt\ :=\ \eta\br p.
\label{2.32}
\end{equation}
Function $\bfvt$ is divergence--free, belongs to
$\bfW^{1,2}_0(U'')$ and satisfies the estimates
\begin{displaymath}
\|\bfvt\|_{1,2;\, U''}\ \leq\ c\, \bigl( \|\bfv''\|_{1,2;\,
U''}+\|\bfv_*\|_{1,2;\, U''} \bigr)\ \leq\ c\,
\|\bfv''\|_{1,2;\, U''}\ \leq\ c\, \|\bfv\|_{1,2},
\end{displaymath}
where $c$ is independent of $\bfv$. The functions $\bfvt$,
$\pt$ satisfy equation (\ref{2.12}) a.e.~in the half-plane
$\R^2_{d-}$, where function $\bfft$ now satisfies
\begin{displaymath}
\bfft\ :=\ \eta\,
\div\bbF-2\nu\br\nabla\eta\cdot\nabla\bfv-\nu\, (\Delta\eta)\,
\bfv-(\nabla\eta)\, p+\nu\br\eta\Delta \bfg_*+\nu\Delta\bfv_*
\qquad \mbox{in}\ U''_{d-}
\end{displaymath}
and $\bfft:=\bfzero$ in $\R^2_{d-}\smallsetminus U''_{d-}$.
Although this function differs from the function $\bfft$ from
the proof of Lemma \ref{L2.2}, it satisfies the same estimate
(\ref{2.15}). Define
\begin{displaymath}
\bfht\ :=\ \eta\br\bbF\cdot\bfn+\nu\,
\frac{\partial\bfv_*}{\partial\bfn} \qquad \mbox{on}\ \Gammao,
\end{displaymath}
where the right hand side is understood as a trace on
$\Gammao$. The function $\bfht$ satisfies
\begin{align}
\|\bfht\|_{1/2,2;\, \Gammao}\ &\leq\ c\, \|\bbF\|_{1/2,2;\,
\Gammao}+\Bigl\| \frac{\partial\bfv_*}{\partial\bfn}
\Bigr\|_{1/2,2;\, \Gammao}\ \leq\ c\, \|\bbF\|_{1,2}+c\,
\|\bfv_*\|_{2,2;\, U''} \nonumber \\
&\leq\ c\, \|\bbF\|_{1,2}+c\, \|\bfv\|_{1,2;\, U''_{d-}}.
\label{2.22}
\end{align}
Put $\bbFt:=\cF(\bfft,\bfht)$, where $\cF$ is the operator from
Lemma \ref{L2.8}. Then $\bbFt\in W^{1,2}(\Omega)^{2\times 2}$,
$\bfft=\div\bbFt$ a.e.~in $\Omega$ and $\bbFt\cdot\bfn=\bfht$
a.e.~on $\Gammao$. Moreover, due to (\ref{2.15}) and
(\ref{2.22}),
\begin{equation}
\|\bbFt\|_{1,2}\ \leq\ c\, \|\bfft\|_2+c\, \|\bfht\|_{1/2,2;\,
\Gammao}\ \leq\ c \bigl( \|\bbF\|_{1,2}+\|\bfgs\|_{2,2}+
\|\bfv\|_{1,2} \bigr). \label{2.23}
\end{equation}

Let the functional $\bfFt\in\Vsd$ be defined by the same
formula as (\ref{2.1}), where we only consider $\bbFt$ instead
of $\bbF$. We claim that $\nu\cA\bfvt=\bfFt$. Indeed, for any
$\bfw\in\Vs$, we have
\begin{align*}
\nu\, \langle\cA & \bfvt,\bfw\rangle_{\sigma}\ =\ \nu\int_{\Omega}
\nabla\bfvt:\nabla\bfw\; \rmd\bfx\ =\ \int_{\Gammao}\nu\,
\frac{\partial\bfvt}{\partial\bfn}\cdot\bfw\; \rmd
l-\nu\int_{\Omega}\Delta\bfvt\cdot\bfw\; \rmd\bfx \\
&= \int_{\Gammao}\nu\, \frac{\partial\bfvt}{\partial\bfn}\cdot
\bfw\; \rmd l+\int_{\Omega}(-\nabla\pt+\bfft)\cdot\bfw\; \rmd\bfx \\
&= \int_{\Gammao} \Bigl[ \nu\,
\frac{\partial\bfvt}{\partial\bfn}-\pt\br\bfn \Bigr]\cdot\bfw\;
\rmd l+\int_{\Omega}\div\bbFt\cdot\bfw\; \rmd\bfx \\
&= \int_{\Gammao} \Bigl[ \eta\, \Bigl(\nu\,
\frac{\partial\bfv}{\partial\bfn}-p\br\bfn\Bigr)-\nu\,
\frac{\partial\bfv_*}{\partial\bfn} \Bigr]\cdot\bfw\; \rmd l+
\int_{\Gammao}(\bbFt\cdot\bfn)\cdot\bfw\; \rmd l-\int_{\Omega}
\bbFt:\nabla\bfw\; \rmd\bfx \\
&= \int_{\Gammao} \Bigl[ -\eta\br\bbF\cdot\bfn-\nu\,
\frac{\partial\bfv_*}{\partial\bfn} \Bigr]\cdot\bfw\; \rmd l+
\int_{\Gammao}(\bbFt\cdot\bfn)\cdot\bfw\; \rmd l-\int_{\Omega}
\bbFt:\nabla\bfw\; \rmd\bfx \\
&= -\int_{\Gammao} \bfht\cdot\bfw\; \rmd l+
\int_{\Gammao}(\bbFt\cdot\bfn)\cdot\bfw\; \rmd l-\int_{\Omega}
\bbFt:\nabla\bfw\; \rmd\bfx \\
&= -\int_{\Omega} \bbFt:\nabla\bfw\; \rmd\bfx\ =\
\langle\bfFt,\bfw\rangle_{\sigma}.
\end{align*}
(We have used the identity $\nu\,
\partial\bfv/\partial\bfn-p\br\bfn=-\bbF\cdot\bfn$ on $\Gammao$,
following from (\ref{2.18}) and the fact that
$\partial\bfgs/\partial\bfn=\bfzero$ on $\Gammao$.)

Let us summarize that we have constructed functions $\bfvt$,
$\pt$ and $\bbFt$, such that $\bfvt$ satisfies the equation
$\nu\cA\bfvt=\bfFt$ and $\pt$ is an associated pressure. The
functions $\bfvt$ and $\pt$ are supported in
$\overline{U''_{d-}}$ and $\bfvt$, $\pt$ are related to $\bfv$,
$p$ through formulas (\ref{2.32}).

Recall that $\bfft$ is supported in $\overline{U''_{d-}}$ and
$\bfht$ is supported in $\overline{U''_{d-}}\cap\Gammao$. For
$\delta\in\R$, whose modulus is so small that
$(x_1,x_2+\delta)\in\Omega$ for all $\bfx=(x_1,x_2)\in
U''_{d-}$, denote
\vspace{-4pt}
\begin{displaymath}
D_2^{\delta}\bfft(x_1,x_2):=\frac{\bfft(x_1,x_2+\delta)-
\bfft(x_1,x_2)}{\delta}, \quad
D_2^{\delta}\bfht(d,x_2):=\frac{\bfht(d,x_2+\delta)-
\bfht(d,x_2)}{\delta}.
\end{displaymath}
$D_2^{\delta}\bfft$ and $D_2^{\delta}\bfht$ are the so called
{\it difference quotients,} see \cite{Ag}, \cite{Gr1} and
\cite{So} for more details regarding their properties and usage
in studies of regularity of solutions of PDE's.

As $\bbFt\in W^{1,2}_{\rm per}(\Omega)^{2\times 2}$ and $\bbFt=
\bbO$ on $\Gammaw$, it can be extended from $\Omega$ to
$\R^2_{(0,d)}$ as a $\tau$--periodic function in variable
$x_2$, lying in $W^{1,2}_{loc}(\R^2_{(0,d)})$ and being equal
to $\bbO$ in $P_k$ (for all $k\in\Z$). Let us denote the
extension again by $\bbFt$ and define
\begin{displaymath}
D_2^{\delta}\bbFt(x_1,x_2)\ :=\ \frac{\bbFt(x_1,x_2+\delta)-
\bbFt(x_1,x_2)}{\delta}.
\end{displaymath}
Denote $\bbFt_{\delta}(x_1,x_2):=\delta^{-1}
\int_0^{\delta}\bbFt(x_1,x_2+ \vartheta)\; \rmd\vartheta$. Then
\begin{displaymath}
D_2^{\delta}\bbFt(x_1,x_2)\ =\
\frac{1}{\delta}\int_0^{\delta}\partial_2\bbFt(x_1,x_2+\vartheta)\;
\rmd\vartheta\ =\ \partial_2\bbFt_{\delta}(x_1,x_2).
\end{displaymath}
Furthermore, using the $\tau$--periodicity of the function
$\bbFt_{\delta}$ in variable $x_2$ in $\R^2_{(0,d)}$, we get
\begin{align*}
\|\bbFt_{\delta} & \|_2^2\ =\ \int_{\Omega}\biggl|\frac{1}{\delta}
\int_0^{\delta} \bbFt(x_1,x_2+\vartheta)\; \rmd
\vartheta\biggr|^2\; \rmd\bfx \\
&=\ \int_0^d\int_0^{\tau}
\biggl|\frac{1}{\delta} \int_0^{\delta} \bbFt(x_1,x_2+\vartheta)\;
\rmd \vartheta\biggr|^2\; \rmd x_2\, \rmd x_1 \nonumber \\
&\leq\ \int_0^d\int_0^{\tau}\frac{1}{\delta}
\int_0^{\delta}\bigl|\bbFt(x_1,x_2+\vartheta)\bigr|^2\;
\rmd\vartheta\, \rmd x_2\, \rmd x_1 \\
&=\ \int_0^d\frac{1}{\delta}
\int_0^{\delta}\int_0^{\tau}\bigl|\bbFt(x_1,y_2)\bigr|^2\;
\rmd y_2\, \rmd\vartheta\, \rmd x_1 \nonumber \\
&=\ \int_0^d \int_0^{\tau}\bigl|\bbFt(x_1,y_2)\bigr|^2\; \rmd
y_2\, \rmd x_1\ =\ \int_{\Omega}\bigl|\bbFt(\bfx)\bigr|^2\;
\rmd\bfx\ =\ \|\bbFt\|_2^2.
\end{align*}
We can similarly show that $\|\nabla\bbFt_{\delta}\|_2^2\leq
\|\nabla\bbFt\|_2^2$. Consequently,
$\|\bbFt_{\delta}\|_{1,2}\leq \|\bbFt\|_{1,2}$. Thus,
\begin{equation}
\| D_2^{\delta}\bbFt\|_2\ =\ \|\partial_2\bbFt_{\delta}\|_2\ \leq\
\|\bbFt_{\delta}\|_{1,2}\ \leq\ \|\bbFt\|_{1,2}\ \leq\ c\, \bigl(
\|\bfft\|_2+\|\bfht\|_{1/2,2;\, \Gammao}\bigr). \label{2.36}
\end{equation}
Let $D_2^{\delta}\bfvt$ and $D_2^{\delta}\pt$ be defined by
analogy with $D_2^{\delta}\bfft$ and $D_2^{\delta}\bbFt$. The
functions $D_2^{\delta}\bfvt$, $D_2^{\delta}\pt$ satisfy the
equations
\begin{align*}
-\nu\Delta D_2^{\delta}\bfvt+\nabla D_2^{\delta}\pt\ &=\
\div D_2^{\delta}\bbFt, \\
\div D_2^{\delta}\bfvt\ &=\ 0
\end{align*}
a.e.~in $\Omega$. Since
\begin{displaymath}
\nu\, \frac{\partial\bfvt}{\partial\bfn}-\pt\br\bfn\ =\ \nu\,
\bigl( \eta\, \frac{\partial\bfv}{\partial\bfn}-\frac{\partial
\bfv_*}{\partial\bfn}\Bigr)-\eta\br p\br\bfn\ =\ -\eta\,
\bbF\cdot\bfn-\nu\, \frac{\partial\bfv_*}{\partial\bfn}\ =\
-\bfht
\end{displaymath}
on $\gammao$, $D_2^{\delta}\bfvt$ and $D_2^{\delta}\pt$ also
satisfy the boundary condition
\begin{displaymath}
-\nu\, \frac{\partial
D_2^{\delta}\bfvt}{\partial\bfn}+D_2^{\delta}\pt\br\bfn\ =\
D_2^{\delta}\bfht
\end{displaymath}
on $\Gammao$. From this, one can deduce that $\nu\cA
D_2^{\delta}\bfvt=\bfFt_{\delta}$. Here, the functional
$\bfFt_{\delta}$, which is an element of $\Vsd$, is defined by
the same formula as (\ref{2.1}), where we only consider
$D_2^{\delta}\bbFt$ instead of $\bbF$. It follows from Lemma
\ref{L2.4} that
\begin{displaymath}
\|\nabla D_2^{\delta}\bfvt\|_2\ \leq\
\|\bfFt_{\delta}\|_{\vsd}.
\end{displaymath}
Since $\|\bfFt_{\delta}\|_{\vsd}\leq\|
D_2^{\delta}\bbFt\|_2\leq c\, \bigl(
\|\bfft\|_2+\|\bfht\|_{1/2,2;\, \Gammao}\bigr)$, we obtain
\begin{equation}
\|\nabla D_2^{\delta}\bfvt\|_2\ \leq\ c\, \bigl(
\|\bfft\|_2+\|\bfht\|_{1/2,2;\, \Gammao}\bigr). \label{2.37}
\end{equation}
Applying further Theorem \ref{T1} (with $\bfg_*=\bfzero$),
(\ref{2.36}) and (\ref{2.37}), we obtain the estimate of
$D_2^{\delta}\pt$:
\begin{equation}
\|D_2^{\delta}\pt\|_2\ \leq\ \cc03\, \bigl( \|\nabla
D_2^{\delta}\bfvt\|_2+\| D_2^{\delta}\bbFt\|_2 \bigr)\ \leq\ c\,
\bigl(\|\bfft\|_2+\|\bfht\|_{1/2,2;\, \Gammao}\bigr). \label{2.38}
\end{equation}
As the right hand sides of (\ref{2.37}) and (\ref{2.38}) are
independent of $\delta$, we may let $\delta$ tend to $0$ and we
obtain
\begin{equation}
\|\nabla\partial_2\bfvt\|_2+\|\partial_2\pt\|_2\ \leq\ c\,
\bigl(\|\bfft\|_2+\|\bfht\|_{1/2,2;\, \Gammao}\bigr). \label{2.39}
\end{equation}
This shows that $\partial_1\partial_2\vt_1$,
$\partial_2^2\vt_1$, $\partial_1\partial_2\vt_2$,
$\partial_2^2\vt_2$ and $\partial_2\pt$ are all in
$L^2(\Omega)$ and their norms are less than or equal to the
right hand side of (\ref{2.39}). Consequently, as $\bfvt$ is
divergence--free, the same statement also holds on
$\partial_1^2\vt_1$. Now, from equation (\ref{2.12})
(considering just the first scalar component of this vectorial
equation), we deduce that $\partial_1\pt\in L^2(\Omega)$.
Finally, considering the second scalar component in equation
(\ref{2.12}), we obtain $\partial_1^2\vt_2\in L^2(\Omega)$,
too. Thus, applying also (\ref{2.15}) and (\ref{2.16}), we
obtain
\begin{displaymath}
\|\bfvt\|_{2,2}+\|\nabla\pt\|_2\ \leq\ c\, \bigl(
\|\bbF\|_{1,2}+\|\bfg_*\|_{2,2}+\|\bfv\|_{1,2} \bigr).
\end{displaymath}
This inequality, formulas (\ref{2.32}),the estimate of
$\|\bfv_*\|_{2,2}$ and the fact that $\eta=1$ on $\Omega'\equiv
U'\cap\R^2_{d-}$ yield (\ref{2.17}).

\vspace{8pt} \noindent
The next corollary is an immediate consequence of Lemmas
\ref{L2.2} and \ref{L2.6}.

\begin{corollary} \label{C2.1}
Let $\Omega'$ be a sub-domain of $\, \Omega,\, $ such that $\,
\overline{\Omega'}\subset\Omega\cup\Gammai^0\cup\Gammaw\cup
\Gammao^0.\, $ Then $\bfv$ and $p$ satisfy estimate
(\ref{2.17}), where $c=c(\nu,\Omega,\Omega')$.
\end{corollary}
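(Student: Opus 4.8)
The plan is to obtain Corollary \ref{C2.1} from Lemmas \ref{L2.2} and \ref{L2.6} by a routine finite-covering argument. The crucial observations are that $\overline{\Omega'}$ is compact and contained in $\Omega\cup\Gammai^0\cup\Gammaw\cup\Gammao^0$, which is relatively open in $\overline{\Omega}$ (its complement in $\overline{\Omega}$ is the closed set $\Gammam\cup\Gammap$), and that $\Gammao\subset\partial\Omega$ lies on the line $x_1=d$, so that in fact $\Omega'\subset\R^2_{d-}$. First I would cover $\overline{\Omega'}$ by neighbourhoods of two shapes, each fitting one of the two lemmas, and then extract a finite subcover.

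For a point $\bfx\in\overline{\Omega'}$ lying in $\Omega\cup\Gammai^0\cup\Gammaw$ (which is itself relatively open in $\overline{\Omega}$, with closed complement $\Gammao\cup\Gammam\cup\Gammap$) I would choose $r_\bfx>0$ so small that $\Omega'_\bfx:=\{\bfy\in\Omega;\ |\bfy-\bfx|<r_\bfx\}$ satisfies $\overline{\Omega'_\bfx}\subset\Omega\cup\Gammai^0\cup\Gammaw$, which is exactly the hypothesis of Lemma \ref{L2.2}. For a point $\bfx\in\overline{\Omega'}\cap\Gammao^0$ I would instead pick a closed line segment $\Gammao^{(\bfx)}\subset\Gammao^0$ having $\bfx$ in its relative interior (possible since $\bfx$ is at positive distance from the corner points $B_0$, $B_1$), and then $\rho_\bfx>0$ so small that $U_\bfx:=\{\bfy\in\R^2_{d-};\ \dist(\bfy,\Gammao^{(\bfx)})<\rho_\bfx\}$ has its closure disjoint with $\Gammam$, $\Gammap$ and $\Gammaw$ (possible since $\Gammao^{(\bfx)}$ is at positive distance from $B_0$, $B_1$ and from $\Gammaw$); this is precisely the setting of Lemma \ref{L2.6}, and $U_\bfx$ together with the relatively open portion of $\Gammao^0$ it abuts is a neighbourhood of $\bfx$ in $\overline{\Omega}$. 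The sets $\Omega'_\bfx$ and $U_\bfx$ (plus those boundary portions) cover $\overline{\Omega'}$, so by compactness finitely many suffice, say $\Omega'_1,\dots,\Omega'_M$ and $U_1,\dots,U_N$.

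To conclude, since $\Omega'\subset\R^2_{d-}$ the boundary portions of $\Gammao^0$ are irrelevant, so $\Omega'\subset\Omega'_1\cup\dots\cup\Omega'_M\cup U_1\cup\dots\cup U_N$; hence for every $\bfL^2(\Omega)$ function the square of its norm over $\Omega'$ is at most the sum of the squares of its norms over these finitely many sets. Applying this to the derivatives of $\bfv$ up to second order and to $\nabla p$ gives
\begin{displaymath}
\|\bfv\|_{2,2;\, \Omega'}^2+\|\nabla p\|_{2;\, \Omega'}^2\ \leq\
\sum_{j=1}^{M}\bigl(\|\bfv\|_{2,2;\, \Omega'_j}^2+\|\nabla p\|_{2;\, \Omega'_j}^2\bigr)+
\sum_{k=1}^{N}\bigl(\|\bfv\|_{2,2;\, U_k}^2+\|\nabla p\|_{2;\, U_k}^2\bigr),
\end{displaymath}
and I would then bound each summand on the right using Lemma \ref{L2.2} on the sets $\Omega'_j$ and Lemma \ref{L2.6} on the sets $U_k$, every such bound being a constant multiple of $\|\div\bbF\|_2+\|\bfg_*\|_{2,2}+\|\bfv\|_{1,2}$ with a constant depending only on $\nu$, $\Omega$ and the corresponding subdomain. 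Summing the finitely many bounds and taking a square root yields (\ref{2.17}); the resulting constant depends on $M$, $N$ and the constants from the two lemmas, all fixed by $\Omega$ and $\Omega'$ alone, so $c=c(\nu,\Omega,\Omega')$. I do not expect a genuine obstacle here: this is the standard localization machinery, and the only point needing a little care is realizing the neighbourhoods near $\Gammao^0$ in the exact form (a tubular neighbourhood of a segment contained in $\{x_1<d\}$) demanded by Lemma \ref{L2.6}, which is possible precisely because the hypothesis $\overline{\Omega'}\subset\Omega\cup\Gammai^0\cup\Gammaw\cup\Gammao^0$ keeps $\overline{\Omega'}$ away from the corner points and from $\Gammam\cup\Gammap$.
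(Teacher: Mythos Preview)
Your proposal is correct and is exactly the standard covering argument that the paper has in mind: the paper simply states that the corollary is an immediate consequence of Lemmas \ref{L2.2} and \ref{L2.6}, without writing out any details. Your finite-subcover construction, with balls handled by Lemma \ref{L2.2} and tubular neighbourhoods of closed subsegments of $\Gammao^0$ handled by Lemma \ref{L2.6}, is precisely the routine localization that makes this ``immediate'' step explicit.
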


\begin{lemma} \label{L2.7}
Let $\Omega'$ be a sub-domain of $\Omega$, such that
$\overline{\Omega'}\cap\Gammaw=\emptyset$ and $\Gammap\subset
\partial\Omega'$. Then
$\bfv$ and $p$ satisfy estimate (\ref{2.17}), where
$c=c(\nu,\Omega,\Omega')$.
\end{lemma}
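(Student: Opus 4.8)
\noindent\textit{Plan of proof.} Since the estimate (\ref{2.17}) on any sub--domain whose closure avoids $\Gammam\cup\Gammap\cup\Gammaw$ is already contained in Corollary \ref{C2.1}, a partition of unity reduces the claim to the case $\Omega'=\{\bfx\in\Omega:\ \dist(\bfx,\Gammap)<\rho\}$, a thin collar of $\Gammap$ with $\rho$ so small that $\overline{\Omega'}$ misses $\Gammaw$ and $\Gammam$ (the collar of $\Gammam$ being handled symmetrically). Being in part~2) of the proof of Theorem \ref{T2}, we may use statement~(a): $\bfv\in\Vs\cap\bfW^{2,2}_{\rm per}(\Omega)$, $p\in W^{1,2}_{\rm per}(\Omega)$; also $\bfg_*\in\bfW^{2,2}_{\rm per}(\Omega)$ is divergence--free and $\div\bbF\in\bfL^2(\Omega)$. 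By the definition of the spaces $W^{k,2}_{\rm per}$, I would extend $\bfv$, $p$, $\bfg_*$ $\tau$--periodically in $x_2$ to functions $\bfv^\sharp\in\bfW^{2,2}_{loc}(\cO)$, $p^\sharp\in W^{1,2}_{loc}(\cO)$, $\bfg_*^\sharp\in\bfW^{2,2}_{loc}(\cO)$, and $\div\bbF$ to $(\div\bbF)^\sharp\in\bfL^2_{loc}(\cO)$, all $\tau$--periodic in $x_2$.

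The geometric point is that on $G:=\Omega\cup\Gammap^0\cup(\Omega+\tau\bfe_2)$ the curve $\Gammap^0$ lies in the interior, while --- because $\Gammai$ and $\Gammao$ are parts of the straight lines $\gammai$ and $\gammao$ --- the corner points $A_1$ and $B_1$ become interior points of \emph{flat}, single--type pieces of $\partial G$: $\Gammai$ and $\Gammai+\tau\bfe_2$ join through $A_1$ into one segment of $\{x_1=0\}$, and $\Gammao$ and $\Gammao+\tau\bfe_2$ join through $B_1$ into one segment of $\{x_1=d\}$. Thus $\partial G$ has no corner and no change of boundary--condition type along $\Gammap$. Next I would check that the extended pair $(\bfu^\sharp,p^\sharp)$, $\bfu^\sharp:=\bfg_*^\sharp+\bfv^\sharp$, satisfies $-\nu\Delta\bfu^\sharp+\nabla p^\sharp=(\div\bbF)^\sharp$ and $\div\bfu^\sharp=0$ a.e.\ in $G$: this holds a.e.\ in $\Omega$ by statement~(b), a.e.\ in $\Omega+\tau\bfe_2$ by the $x_2$--shift of (b), and no distributional jump can occur across the null curve $\Gammap^0$ because $\bfu^\sharp\in\bfW^{2,2}_{loc}$, $p^\sharp\in W^{1,2}_{loc}$ there --- this is the only place statement~(a) is needed, and it is what makes the periodicity condition (\ref{1.6}) irrelevant to the present proof. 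Hence $\bfv^\sharp$ is divergence--free and solves $-\nu\Delta\bfv^\sharp+\nabla p^\sharp=\bff^\sharp:=(\div\bbF)^\sharp+\nu\Delta\bfg_*^\sharp$ a.e.\ in $G$, with $\bff^\sharp\in\bfL^2_{loc}(G)$; it vanishes on the flat inflow piece of $\partial G$ near $A_1$ (because $\bfv=\bfzero$ on $\Gammai$), and on the flat outflow piece near $B_1$ the pair $\bfu^\sharp$, $p^\sharp$ obeys the ``do nothing'' condition (\ref{2.18}) (by (c) and its $x_2$--shift).

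I would then cover the compact set $\Gammap$ by finitely many balls of three kinds and combine the resulting local estimates by a subordinate partition of unity. On a ball contained in the interior of $G$ (these cover $\Gammap^0$), interior $\bfW^{2,2}$--regularity of the Stokes system --- \cite[Proposition I.2.3]{Te} applied to $\eta\bfv^\sharp$, $\eta p^\sharp$ for a cut--off $\eta$, exactly as in the proof of Lemma \ref{L2.2} --- gives an estimate of the form (\ref{2.17}) on a concentric smaller ball. On a ball near $A_1$ the same follows from the argument of Lemma \ref{L2.2} applied to $\bfv^\sharp$ (homogeneous Dirichlet data on a flat --- hence $C^2$ --- part of the boundary, right--hand side in $\bfL^2$, divergence controlled by $\|\bfv^\sharp\|_{1,2}$, the pressure term controlled as there). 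On a ball near $B_1$ the local picture in $G$ --- a flat ``do nothing'' outflow segment with no other part of the boundary nearby, and $\tau$--periodicity in $x_2$ intact --- coincides with the one treated in Lemma \ref{L2.6}, so the difference--quotient argument in the $x_2$--direction given there applies essentially verbatim. Summing these estimates and restricting from $G$ back to $\Omega$, where $\bfv^\sharp=\bfv$ and $p^\sharp=p$, I obtain (\ref{2.17}) on the collar $\Omega'$.

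The step that I expect to demand genuine care is the last local estimate, near $B_1$: one has to check that transplanting the proof of Lemma \ref{L2.6} to the enlarged domain $G$ is legitimate, i.e.\ that every ingredient of that proof --- the divergence--free extension across $\{x_1=d\}$ taken from \cite{KMPT}, the difference--quotient technique in $x_2$ together with the $\tau$--periodicity estimates, and the ``do nothing'' boundary condition --- survives in $G$ near $B_1$; it does, because the profiles and the curves $\Gammam$, $\Gammap+\tau\bfe_2$ all lie at positive distance from $B_1$ and $\tau$--periodicity in $x_2$ is untouched by the gluing. The conceptual point underlying everything is that the periodic extension \emph{removes} the mixed--type corners at $A_1$ and $B_1$, turning them into interior points of a single flat boundary piece on which only one type of condition is imposed; this is what allows all three local estimates to be quoted from results already established.
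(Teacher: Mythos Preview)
Your approach is correct and rests on the same idea as the paper's: exploit $\tau$--periodicity to push $\Gammap$ into the interior of an auxiliary domain, so that the estimate near $\Gammap$ (including the corner points $A_1$, $B_1$) reduces to cases already handled by Lemmas~\ref{L2.2} and~\ref{L2.6}. The execution differs. Instead of gluing $\Omega$ and $\Omega+\tau\bfe_2$ into a doubled domain $G$, the paper \emph{shifts} the fundamental domain by a small $\delta>0$: it sets $\Omega^{\delta}$ to be the region bounded by $\Gammai+\delta\bfe_2$, $\Gammam+\delta\bfe_2$, $\Gammao+\delta\bfe_2$, $\Gammap+\delta\bfe_2$ and $\Gammaw$, defines $\bfv^{\delta}$, $\bbF^{\delta}$, $\bfg_*^{\delta}$ on $\Omega^{\delta}$ by periodic extension, and verifies directly that the weak equation $\nu\cA^{\delta}\bfv^{\delta}=\bfF^{\delta}+\nu\bfG^{\delta}$ holds there. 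The payoff is that $\Omega^{\delta}$ has exactly the same structure as $\Omega$ --- same boundary pieces, same corner configuration --- so Corollary~\ref{C2.1} applies to $\Omega^{\delta}$ \emph{verbatim}, with no need to revisit the proofs of Lemmas~\ref{L2.2} or~\ref{L2.6} in any new geometry. In particular, since $\Gammap$ now lies in $\Omega^{\delta}\cup(\Gammai^{\delta})^0\cup(\Gammao^{\delta})^0$, a single invocation of Corollary~\ref{C2.1} covers the whole of $\Gammap$, endpoints $A_1$, $B_1$ included. The step you flag as ``demanding genuine care'' --- transplanting the difference--quotient argument of Lemma~\ref{L2.6} to $G$ near $B_1$ --- is precisely what the shift sidesteps: your $G$ is not of the same type as $\Omega$ (it has period $2\tau$, two profiles, etc.), whereas $\Omega^{\delta}$ is. Your route works, but the paper's is shorter because it arranges for the earlier lemmas to be reusable as black boxes.
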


\begin{proof}
Consider $\delta\in(0,\tau)$ and denote
\begin{align*}
A_0^{\delta} &:= A_0+\delta\br\bfe_2, & A_1^{\delta} &=
A_1+\delta\br\bfe_2, & B_0^{\delta} &:=B_0+\delta\br\bfe_2,
& B_1^{\delta} &= B_1+\delta\br\bfe_2, \\
\Gammai^{\delta} &:= \Gammai+\delta\br\bfe_2, & \Gammam^{\delta}
&= \Gammam+\delta\br\bfe_2, & \Gammap^{\delta} &:=
\Gammap+\delta\br\bfe_2, & \Gammao^{\delta} &=
\Gammao+\delta\br\bfe_2,
\end{align*}
where $\bfe_2$ is the unit vector in the direction of the
$x_2$--axis. Suppose that $\delta>0$ is so small that
$\Omega'\cap\Gammam^{\delta}=\emptyset$ and the profile $P_0$
lies above $\Gammam^{\delta}$, which means that
$P_0\subset\{(x_1,y_2)\in\R^2;\ y_2>x_2$ for $(x_1,x_2)\in
\Gammam^{\delta}\}$. (Recall that $P_0=\overline{{\rm Int}\,
\Gammaw}$, see Fig.~1.) Denote by $\Omega^{\delta}$ the domain
bounded by the curves $\Gammai^{\delta}$, $\Gammam^{\delta}$,
$\Gammao^{\delta}$, $\Gammap^{\delta}$ and $\Gammaw$.
Precisely,
\begin{displaymath}
\Omega^{\delta}\ :=\ \bigl\{ (x_1,y_2)\in\R^2;\ 0<x_1<d,\
x_2<y_2<x_2+\tau\ \mbox{for}\
(x_1,x_2)\in\Gammam^{\delta}\bigr\} \smallsetminus P_0.
\end{displaymath}
Denote by $\bfv^{\delta}$ the function, defined by the formulas
\begin{equation}
\bfv^{\delta}(x_1,x_2)\ :=\ \left\{ \begin{array}{ll}
\bfv(x_1,x_2) & \mbox{for}\ (x_1,x_2)\in \Omega^{\delta}\cap\Omega, \\
[2pt]  \bfv(x_1,x_2-\tau) & \mbox{for}\ (x_1,x_2)\in
\Omega^{\delta} \smallsetminus\Omega. \end{array} \right.
\label{2.9}
\end{equation}
By analogy, denote
\begin{align*}
\bbF^{\delta}(x_1,x_2)\ &:=\ \left\{ \begin{array}{ll}
\bbF(x_1,x_2) & \mbox{for}\ (x_1,x_2)\in \Omega^{\delta}\cap\Omega, \\
[2pt] \bbF(x_1,x_2-\tau) & \mbox{for}\ (x_1,x_2)\in\Omega^{\delta}
\smallsetminus\Omega, \end{array} \right. \\
\bfg_*^{\delta}(x_1,x_2)\ &:=\ \left\{ \begin{array}{ll}
\bfg_*(x_1,x_2) & \mbox{for}\ (x_1,x_2)\in \Omega^{\delta}\cap\Omega, \\
[2pt] \bfg_*(x_1,x_2-\tau) & \mbox{for}\
(x_1,x_2)\in\Omega^{\delta} \smallsetminus\Omega. \end{array}
\right.
\end{align*}
Let the spaces $\bfV_{\sigma}^{1,2}(\Omega^{\delta})$ and
$\bfV_{\sigma}^{-1,2}(\Omega^{\delta})$ be defined in the same
way as $\Vs$ and $\Vsd$, respectively, and let operator
$\cA^{\delta}$ be defined in the same way as $\cA$, with the
only difference that it acts on functions from
$\bfV_{\sigma}^{1,2}(\Omega^{\delta})$ to
$\bfV_{\sigma}^{-1,2}(\Omega^{\delta})$. Obviously,
$\bbF^{\delta}\in W^{1,2}(\Omega^{\delta})^{2\times 2}$ and
$\|\bbF^{\delta}\|_{1,2;\, \Omega^{\delta}}=\|\bbF\|_{1,2}$.
Similarly, the function $\bfg_*^{\delta}$ has the same norm and
properties in $\Omega^{\delta}$ as the function $\bfg_*$ in
$\Omega$. Let the functionals $\bfF^{\delta}$ and
$\bfG^{\delta}$ in the dual space
$\bfV_{\sigma}^{-1,2}(\Omega^{\delta})$ be defined by analogous
formulas as $\bfF$ and $\bfG$.

Our next claim is to show that
$\bfv^{\delta}\in\bfV_{\sigma}^2(\Omega^{\delta})$ and
$\nu\cA^{\delta}\bfv^{\delta}=\bfF^{\delta}+\nu\,
\bfG^{\delta}$. Since $\bfv\in\Vs$, there exists a sequence
$\{\bfv_n\}$ in $\cCs$, such that $\bfv_n\to\bfv$ in the norm
of $\bfW^{1,2}(\Omega)$. Define
\begin{displaymath}
\bfv_n^{\delta}(x_1,x_2)\ :=\ \left\{ \begin{array}{ll}
\bfv_n(x_1,x_2) & \mbox{for}\ (x_1,x_2)\in
\overline{\Omega^{\delta}} \cap\Omega, \\ [2pt]
\bfv_n(x_1,x_2-\tau) & \mbox{for}\ (x_1,x_2)\in
\overline{\Omega^{\delta}} \smallsetminus\Omega. \end{array}
\right.
\end{displaymath}
Then $\bfv_n^{\delta}\in
\boldsymbol{\cC}_{\sigma}^{\infty}(\overline{\Omega^{\delta}})$
and $\bfv_n^{\delta}\to\bfv^{\delta}$ in
$\bfW^{1,2}(\Omega^{\delta})$. This confirms that
$\bfv^{\delta}\in\bfV_{\sigma}^2(\Omega^{\delta})$.
Furthermore, let $\bfw\in\Vs$ and
$\bfw^{\delta}\in\bfV_{\sigma}^{1,2}(\Omega^{\delta})$ be
related in the same way as $\bfv$ and $\bfv^{\delta}$ in
(\ref{2.9}). Then, denoting by $\langle\, .\, ,\, .\,
\rangle_{\sigma;\, \Omega^{\delta}}$ the duality pairing
between $\bfV_{\sigma}^{-1,2}(\Omega^{\delta})$ and
$\bfV_{\sigma}^{1,2}(\Omega^{\delta})$, we have
\begin{align*}
\langle\nu\cA^{\delta}\bfv^{\delta} &
,\bfw^{\delta}\rangle_{\sigma;\, \Omega^{\delta}} = \nu\,
(\nabla\bfv^{\delta},\nabla \bfw^{\delta})_{2;\, \Omega^{\delta}} \\
&=\ \nu \int_{\Omega^{\delta}\cap\br\Omega}\nabla\bfv^{\delta}:
\nabla \bfw^{\delta}\; \rmd\bfx+\nu\int_{\Omega^{\delta}
\smallsetminus\Omega}\nabla\bfv^{\delta}:\nabla\bfw^{\delta}\;
\rmd\bfx \\
&=\ \nu \int_{\Omega^{\delta}\cap\br\Omega}\nabla\bfv:\nabla\bfw\;
\rmd\bfx+\nu\int_0^{\delta} \int_{\Gammap+\vartheta\,
\rme_2}\nabla\bfv^{\delta}:\nabla\bfw^{\delta}\;
\rmd l\, \rmd\vartheta \\
&=\ \nu\int_{\Omega^{\delta}\cap\br\Omega}\nabla\bfv:\nabla\bfw\;
\rmd\bfx+\nu\int_0^{\delta} \int_{\Gammam+\vartheta\,
\rme_2}\nabla\bfv: \nabla\bfw\; \rmd l\, \rmd\vartheta \\
\noalign{\vskip 4pt}
&=\ \nu\, (\nabla\bfv,\nabla\bfw)_2\ =\ \langle\nu\cA\bfv,\bfw
\rangle_{\sigma}\ =\ \langle\bfF,\bfw\rangle_{\sigma}+\langle\bfG,
\bfw\rangle_{\sigma} \\ \noalign{\vskip 2pt}
&=\ -\int_{\Omega}\bbF:\nabla\bfw\; \rmd\bfx+
\int_{\Omega}\nabla\bfg_*:\nabla\bfw\; \rmd\bfx \\
&=\ -\int_{\Omega^{\delta}} \bbF^{\delta}:\nabla\bfw^{\delta}\;
\rmd\bfx+\int_{\Omega^{\delta}}\nabla\bfg_*^{\delta}:
\nabla\bfw^{\delta}\; \rmd\bfx \\ \noalign{\vskip 4pt}
&=\ \langle\bfF^{\delta},\bfw^{\delta}\rangle_{\sigma;\,
\Omega^{\delta}}+\langle\bfG^{\delta},\bfw^{\delta}
\rangle_{\sigma;\, \Omega^{\delta}}.
\end{align*}
This verifies that $\nu\cA^{\delta}\bfv^{\delta}=
\bfF^{\delta}+\nu\, \bfG^{\delta}$.

Denote $(\Omega')^{\delta/2}:=\Omega'\cup\{(x_1,y_2)\in\R^2;\
x_2\leq y_2<x_2+\frac{1}{2}\delta$ for $(x_1,x_2)\in\Gammap\}$.
Then $(\Omega')^{\delta/2}$ is a sub-domain of
$\Omega^{\delta}$, such that
$\Gammap^0\subset(\Omega')^{\delta/2}$. The statements of Lemma
\ref{L2.7} now follow from Corollary \ref{C2.1}, applied to the
equation $\nu\cA^{\delta}\bfv^{\delta}=
\bfF^{\delta}+\nu\br\bfG^{\delta}$ in domain $\Omega^{\delta}$,
where we consider $(\Omega')^{\delta/2}$ instead of $\Omega'$.
\end{proof}

\vspace{4pt}
An analogue of Lemma \ref{L2.7} also holds if one considers
$\Omega'$, satisfying the condition
$\Gammam\subset\partial\Omega'$ instead of
$\Gammap\subset\partial\Omega'$.  This, Corollary \ref{C2.1},
Lemma \ref{L2.7} and Lemma \ref{L2.4} (which enables us to
estimate $\|\bfv\|_{1,2}$ on the right hand side of
(\ref{2.17})) now imply that (\ref{2.7}) holds.

\vspace{-4pt}
\paragraph{3) \ Solution of the equation $\nu\cA\bfv=\bff$ for
$\bff\in\Ls$.} \ Denote by $D(A)$ the set of functions
$\bfv\in\Vs\cap\bfW^{2,2}_{per}(\Omega)$, such that there
exists $q\in W^{1/2,2}_{per}(\Gammao)$, satisfying
$\partial\bfv/\partial\bfn=q\br\bfe_1$ on $\Gammao$ in the
sense of an equality in $\bfW^{1/2,2}(\Gammao)$. The linear space
$\widetilde{\boldsymbol{\cC}}^{\infty}_{\sigma}(\overline{\Omega}):=
\bigl\{\bfw\in \cCs;\ \partial\bfw/\partial\bfn\perp\bfe_2\
\mbox{on}\ \Gammao\bigr\}$ contains $D(A)$ as a dense subset.
Since $\widetilde{\boldsymbol{\cC}}^{\infty}_{\sigma}
(\overline{\Omega})$ is dense in $\cCs$ in the $L^2$--norm and
$\Ls$ is the closure of $\cCs$ in $\bfL^2(\Omega)$, $D(A)$ is
dense in $\Ls$. Put $A:=\cA\br|_{D(A)}$.

\medskip
Let us at first show that $R(A)$ (the range of $A$) is a subset
of $\Ls$. Thus, let $\bfv\in D(A)$ and $q$ be a corresponding
function in $W^{1/2,2}_{\rm per}(\Gammao)$. It follows from
Lemma \ref{L2.9} that there exists an extension $q_*\in
W^{1,2}_{\rm per}(\Omega)$ of $q$ from $\Gammao$ to $\Omega$,
which equals zero in the neighborhood of $\Gammai$ and
$\Gammaw$ and satisfies
\begin{equation}
\|q_*\|_{1,2}\ \leq\ c\, \|q\|_{1/2,2;\, \Gammao}, \label{2.46}
\end{equation}
where $c=c(\Omega)$. For any $\bfw\in\Vs$, $\bfv$ satisfies
\begin{align*}
\langle A\bfv,\bfw\rangle_{\sigma}\ &=\ (\nabla\bfv,
\nabla\bfw)_2\ = \int_{\Gammao}\frac{\partial\bfv}
{\partial\bfn}\cdot\bfw\; \rmd l-(\Delta\bfv,\bfw)_2 \\
&=\ \int_{\Gammao}q\br\bfn\cdot\bfw\; \rmd l- (\Delta\bfv,\bfw)_2\
=\ (\nabla q_*-\Delta\bfv,\bfw)_2.
\end{align*}
From this and the density of $\Vs$ in $\Ls$, we deduce that
$A\bfv$ can be identified with a bounded linear functional on $\Ls$.
Due to Riesz' theorem, it can be represented by an element of $\Ls$
(which is, in our case, the function $\nabla q_*-\Delta\bfv$).
We have proven the inclusion $R(A)\subset\Ls$.

Treating $A$ as an operator in $\Ls$, we easily verify that $A$
is symmetric: let $\bfv^{(1)}$, $\bfv^{(2)}\in D(A)$. Then
\begin{align*}
\bigl(A\bfv^{(1)},\bfv^{(2)}\bigr)_2\ &=\
\blangle\cA\bfv^{(1)},\bfv^{(2)}\brangle_{\sigma}\ =\
\bigl(\nabla\bfv^{(1)}, \nabla\bfv^{(2)}\bigr)_2\ =\
\bigl(\nabla\bfv^{(2)}, \nabla\bfv^{(1)}\bigr)_2 \\
&=\ \blangle\cA\bfv^{(2)},\bfv^{(1)}\brangle_{\sigma}\ =\
\bigl(A\bfv^{(2)},\bfv^{(1)}\bigr)_2\ =\
\bigl(\bfv^{(1)},A\bfv^{(2)}\bigr)_2.
\end{align*}

Further, we show that operator $A$ is closed: let $\{\bfv_n\}$
be a sequence in $D(A)$, such that $\bfv_n\to\bfv$ (for
$n\to\infty$) in $\Ls$. Put $\bff_n:=A\bfv_n$. Suppose that
$\bff_n\to\bff$ in $\Ls$. Put $\bbF_n:=\cF(\bff_n,\bfzero)$,
where $\cF$ is the operator from Lemma \ref{L2.8}. As all
functions $\bfv_n$ ($n=1,2,\dots$) lie in
$\Vs\cap\bfW^{2,2}(\Omega)$, we may apply estimate (\ref{2.7})
(where we consider $\bfgs=\bfzero$) to the difference
$\bfv_m-\bfv_n$ (for any $m,n\in\N$) and afterwards use the
boundedness of operator $\cF$:
\begin{displaymath}
\|\bfv_m-\bfv_n\|_{2,2}\ \leq\ c\, \|\bbF_m-\bbF_n\|_{1,2}\ =\
c\, \| \cF(\bff_m,\bfzero)-\cF(\bff_n,\bfzero)\|_{1,2}\ \leq\
c\, \|\bff_m-\bff_n\|_2,
\end{displaymath}
where $c$ is independent of $m,\, n$. From this, we deduce that
$\bfv\in\Vs\cap\bfW^{2,2}(\Omega)$. As
$\partial\bfv_n/\partial\bfn$ is in $W^{1/2,2}_{per}(\Gammao)$
and normal to $\Gammao$ (for each $n\in\N$), there exists $q\in
W^{1/2,2}_{per}(\Gammao)$, such that
$\partial\bfv/\partial\bfn=q\br\bfn$ on $\Gammao$. Hence
$\bfv\in D(A)$ and $A\bfv=\bff$. We have proven that $A$ is a
closed operator in $\Ls$.

Operator $A$ is positive, because
$(A\bfv,\bfv)=\|\nabla\bfv\|_2^2$ for $\bfv\in D(A)$.
Consequently, $A$ is a self-adjoint operator in $\Ls$. Then
$R(A)^{\perp}$ (the orthogonal complement to $R(A)$ in $\Ls$)
is equal to $N(A)$ (the null space of $A$), see
\cite[p.~168]{Ka}. However, as $A\subset\cA$ and
$N(\cA)=\{\bfzero\}$, we also have
$R(A)^{\perp}=N(A)=\{\bfzero\}$. This shows that $R(A)$ is
dense in $\Ls$. As all functions from $D(A)$ are in
$\Vs\cap\bfW^{2,2}(\Omega)$, we may again apply estimate
(\ref{2.7}) (with $\bfgs=\bfzero$) and afterwards the open
graph theorem  and deduce that $R(A)$ is closed in $\Ls$. Thus,
$R(A)=\Ls$. As $A$ coincides with $\cA$ on $D(A)$, we observe
that if $\bff\in\Ls$ (which can be identified with a subspace
of $\Vsd$), the equation $\nu\cA\bfv=\bff$ has a solution in
$D(A)$.

\vspace{-4pt}
\paragraph{4) \ Solution of the equation $\nu\cA\bfv_1=\bfF_1$.} \
Recall that the functional $\bfF_1\in\Vsd$ is defined by
formula (\ref{2.1}), where $\bbF_1\cdot\bfn=h_1\br\bfn$ on
$\Gammao$ and $h_1\in W^{1/2,2}_{\rm per}(\Gammao)$.
Extending function $h_1$ from $\Gammao$ to $\Omega$ (by means
of Lemma \ref{L2.9}) so that the extended function $h_{1*}$ is
in $W^{1,2}_{\rm per}(\Omega)$ and equals zero in the
neighborhood of $\Gammai$ and $\Gammaw$, we obtain
\begin{align*}
\langle\bfF_1,\bfw\rangle_{\sigma}\ &=\
-\int_{\Omega}\bbF_1:\nabla\bfw\; \rmd\bfx\ =\
-\int_{\Gammao}(\bbF_1\cdot\bfn)\cdot\bfw\; \rmd
l+\int_{\Omega}\div\bbF_1\cdot\bfw\; \rmd\bfx \\
&=\ -\int_{\Gammao}h_1\br\bfn\cdot\bfw\; \rmd
l+\int_{\Omega}\div\bbF_1\cdot\bfw\; \rmd\bfx\ =\ \int_{\Omega}
[-\nabla h_{1*}+\div\bbF_1]\cdot\bfw\; \rmd\bfx \\ \noalign{\vskip 4pt}
&=\ \bigl(-\nabla h_{1*}+\div\bbF_1,\br\bfw\bigr)_2.
\end{align*}
From this, we observe that $\bfF_1$ can be identified with a
function from $\Ls$. The inclusion $\bfv_1\in\Vs\cap
\bfW^{2,2}_{\rm per}(\Omega)$  now follows from part 3) of this
proof.

Due to Theorem \ref{T1}, there exists $p_1\in L^2(\Omega)$, such that
$\bfv_1$ and $p_1$ satisfy equation (\ref{2.33}), which now
takes the form
\begin{equation}
-\nu\Delta\bfv_1+\nabla p_1+\div\bbF_1\ =\ \bfzero. \label{2.24}
\end{equation}
As $\Delta\bfu$ and $\div\bbF_1$ belong to $\bfL^2(\Omega)$,
$\nabla p_1$ is in $\bfL^2(\Omega)$, too. Thus, $p_1\in
W^{1,2}(\Omega)$. Let us show that $p_1\in W^{1,2}_{\rm
per}(\Omega)$. Multiplying equation (\ref{2.24}) by
$\bfw\in\Vs$, we obtain
\begin{align*}
0\ &=\ \int_{\Omega}\bigl[-\nu\Delta\bfv_1+\nabla
p_1+\div\bbF_1\bigr]\cdot\bfw\; \rmd\bfx\ =\ \int_{\partial\Omega}
\Bigl[-\nu\, \frac{\partial\bfv_1}{\partial\bfn}+p_1\br\bfn+
\bbF_1\cdot\bfn\Bigr]\cdot\bfw\; \rmd l \\
&=\ \int_{\Gammam\cup\Gammap} \Bigl[-\nu\,
\frac{\partial\bfv_1}{\partial\bfn}+p_1\br\bfn+
\bbF_1\cdot\bfn\Bigr]\cdot\bfw\; \rmd l+\int_{\Gammao}
\Bigl[-\nu\, \frac{\partial\bfv_1}{\partial\bfn}+p_1\br\bfn+
\bbF_1\cdot\bfn\Bigr]\cdot\bfw\; \rmd l \\
&=\ \int_{\Gammam\cup\Gammap}p_1\, \bfn\cdot\bfw\; \rmd l\ =\
\int_{\Gammam}\bigl[ p_1(x_1,x_2)-p(x_1,x_2+\tau)\bigr]\,
\bfn\cdot\bfw\; \rmd l.
\end{align*}
Since this holds for all $\bfw\in\Vs$, $p_1$
satisfies the condition of periodicity (\ref{1.7}).

\vspace{-4pt}
\paragraph{5) \ Solution of the equation $\nu\cA\bfv_2=\bfF_2$.} \
One can deduce by means of Lemma \ref{L2.9} that, there exists
a function $\psi\in W^{3,2}_{\rm per}(\Omega)$, such that
$\psi=\partial_1\psi=0$ on $\Gammao$ and $\nu\,
\partial_1^2\psi=h_2$ on $\Gammao$. Function $\psi$ equals zero in
the neighborhood of $\Gammai$ and $\Gammaw$. Put
$\bfv_2:=-\nabla^{\perp}\psi$. Then
$\bfv_2\in\Vs\cap\bfW^{2,2}_{\rm per}(\Omega)$ and $\nu\,
\partial\bfv_2/\partial\bfn=-h_2\, \bfe_2$ on $\Gammao$. Put $\bff_2:=
-\nu\Delta\bfv_2$ and $\bbH:=\cF(-\div\bbF_2+\bff_2,\bfzero)$,
where $\cF$ is the operator from Lemma \ref{L2.8}. Then
$\div\bbH=-\div\bbF_2+\bff_2$ in $\Omega$ and
$\bbH\cdot\bfn=\bfzero$ on $\Gammao$. Now, for all
$\bfw\in\Vs$, we have
\begin{align*}
\nu\, \blangle\nabla\bfv_2 & , \nabla\bfw\brangle_2\ =
\int_{\Gammao}\!\! \nu\,
\frac{\partial\bfv_2}{\partial\bfn}\cdot\bfw\; \rmd
l-\int_{\Omega}\nu\Delta\bfv_2\cdot\bfw\; \rmd\bfx \\
&=\ -\int_{\Gammao}h_2\br\bfe_2\cdot\bfw\; \rmd
l+\int_{\Omega}\bff_2\cdot\bfw\; \rmd\bfx \\
&=\ -\int_{\Gammao}(\bbF_2\cdot\bfn)\cdot\bfw\; \rmd
l+\int_{\Omega}\div(\bbF_2+\bbH)\cdot \bfw\; \rmd\bfx \\
&=\ -\int_{\Gammao}(\bbF_2\cdot\bfn)\cdot\bfw\; \rmd
l+\int_{\Omega}\div\bbF_2\cdot \bfw\; \rmd\bfx\ =\ -\int_{\Omega}
\bbF_2:\nabla\bfw\; \rmd\bfx \\
&=\ \langle\bfF_2,\bfw\rangle_{\sigma}.
\end{align*}
This shows that $\nu\cA\bfv_2=\bfF_2$. In other words, we have
proven that this equation has a solution in
$\Vs\cap\bfW^{2,2}_{\rm per}(\Omega)$. By analogy with $p_1$,
there exists an associated pressure $p_2\in W^{1,2}_{\rm per}
(\Omega)$.

\vspace{-4pt}
\paragraph{6)\ Solution of the equation $\nu\cA\bfv_3=\nu\bfG$.} \ As
$\Delta\bfgs\in\bfL^2(\Omega)$, we may put
$\bbF_*:=\cF(\nu\Delta\bfgs,\bfzero)$. Then
$\div\bbF_*=\nu\Delta\bfgs$ in $\Omega$ and
$\bbF_*\cdot\bfn=\bfzero$ on $\Gammao$. Let the functional
$\bfF_*\in\Vsd$ be defined by formula (\ref{2.1}), where we
consider $\bbF_*$ instead of $\bbF$. The solution $\bfv_3$ of
the equation $\nu\cA\bfv_3=\bfG$ satisfies
\begin{align*}
\langle\nu\cA\bfv_3,\bfw\rangle_{\sigma}\ &=\
\langle\nu\bfG,\bfw\rangle_{\sigma}\ =\
-\int_{\Omega}\nu\nabla\bfgs:\nabla\bfw\; \rmd\bfx\ =\
\int_{\Omega} \nu\Delta\bfgs\cdot\bfw\; \rmd\bfx \\
&=\ \int_{\Omega}\div\bbF_*\cdot\bfw\; \rmd\bfx\ =\ -\int_{\Omega}\bbF_*:\nabla\bfw\; \rmd\bfx
\end{align*}
for all $\bfw\in\Vs$. Since $\bbF_*$ is in $W^{1,2}_{\rm
per}(\Omega)^{2\times 2}$, we obtain the inclusion $\bfv_3\in
\Vs\cap\bfW^{2,2}_{\rm per}(\Omega)$ from part 4) or 5) of this
proof. The existence of an associated pressure $p_3\in
W^{1,2}_{\rm per}(\Omega)$ now follows by means of the same
arguments as at the end of part 4).

\vspace{-4pt}
\paragraph{7) \ The validity of statement (a).} \ The solvability of
the equation $\nu\cA\bfv=\bfF+\nu\bfG$ in
$\Vs\cap\bfW^{2,2}_{\rm per}(\Omega)$ and the existence of an
associated pressure in $W^{1,2}_{\rm per}(\Omega)$ now follows
from the decomposition of the right hand side to
$\bfF_1+\bfF_2+\nu\bfG$ and from the parts 4), 5) and 6) of
this proof.

The proof of Theorem \ref{T2} is completed.

\begin{remark} \label{R2} \rm
Theorem \ref{T2} can be generalized so that instead of the
functions $\bfu\in\bfW^{2,2}_{\rm per}(\Omega)$ and $p\in
W^{1,2}_{\rm per} (\Omega)$, it yields
$\bfu\in\bfW^{n+2,2}_{\rm per}(\Omega)$ and $p\in
W^{n+1,2}_{\rm per}(\Omega)$ for $n\in\{0\}\cup\N$. The
generalization says:

\vspace{2pt}
{\it Let $n\in\N\cup\{0\}$. Let the closed curve $\Gammaw$ (the
boundary of profile $P_0$) be of the class $C^{n+2}$, $\bbF\in
W^{n+1,2}_{\rm per}(\Omega)^{2\times 2}$ and $\bfg$, $\bfg_*$
be the functions from Lemma \ref{L2.1}, where we consider
$m=n+1$. Let the functionals $\bfF$ and $\bfG$ be defined by
formulas (\ref{2.1}) and (\ref{2.34}), respectively. Then

\begin{list}{}
{\setlength{\topsep 1pt}
\setlength{\itemsep 1pt}
\setlength{\leftmargin 19pt}
\setlength{\rightmargin 0pt}
\setlength{\labelwidth 14pt}}

\item[1)]
the unique solution $\bfv$ of the equation
$\nu\cA\bfv=\bfF+\nu\br\bfG$ belongs to the space
$\Vs\cap\bfW^{n+2,2}_{\rm per}(\Omega)$ and the associated
pressure $p$ is in $W^{n+1,2}_{\rm per} (\Omega)$,

\item[2)]
$\bfu$, $p$ satisfy statements (b) and (c) of Theorem \ref{T2},

\item[3)]
there exists a constant $\cn02=\cc02(\nu,\Omega,n)$, such that
\begin{equation}
\|\bfu\|_{n+2,2}+\|\nabla p\|_{n,2}\ \leq\ \cc02\, \bigl(
\|\bbF\|_{n+1,2}+\|\bfg_*\|_{n+2,2} \bigr). \label{2.26}
\end{equation}

\end{list}}

As the complete proof of the generalization would be long and
its steps would be just technical modifications of the steps
from the proof of Theorem \ref{T2}, we do not include it here. We only
note that the corresponding analogue of Lemma \ref{L2.2} would
use Proposition I.2.3 from \cite{Te} with $n+2$ instead of $2$,
the analogue of Lemma \ref{L2.6} would use Theorem III.3.3 from
\cite{Ga} in a subtler way and with $m=n+1$ instead of $m=1$ in
order to obtain function $\bfv_*$ (see the proof of Lemma
\ref{L2.6}), and it would be also necessary to use higher order
difference quotients in the proof of the analogue of Lemma
\ref{L2.6}.
\end{remark}

\vspace{-8pt}
\paragraph{Acknowledgement.} The author acknowledges the support of
the European Regional Deve\-lopment Fund-Project ``Center for
Advanced Applied Science'' No.~CZ.02.1.01/0.0/0.0/ \\ 16\_019
/0000778.

\end{document}